\newtheorem{thm}{Theorem}[section]
\newtheorem{corollary}[thm]{Corollary}
\newtheorem{lemma}[thm]{Lemma}
\newtheorem{prop}[thm]{Proposition}
\numberwithin{equation}{section}
\def\R{{\mathbb R}}
\def\P{{\mathbb P}}
\def\E{{\mathbb E}}
\def\1{{\bf 1}}
\renewcommand{\Pr}{\mathbb P}
\newcommand{\ip}[1]{\langle #1 \rangle}
\renewcommand{\bar}{\overline}
\begin{document}

\title{\Large \bf
  Global Heat Kernel Estimates for Fractional Laplacians in Unbounded
  Open Sets
  \thanks{Research partially supported by NSF Grant
 DMS-0600206.}
}

\author{{\bf Zhen-Qing Chen} \quad \hbox{and} \quad {\bf Joshua Tokle}}

\date{(June 5, 2009)}
\maketitle

\begin{abstract}
In this paper, we  derive global sharp heat kernel estimates for
symmetric $\alpha$-stable processes (or equivalently, for the
fractional Laplacian with zero exterior condition) in two classes of
unbounded $C^{1,1}$ open sets in $\R^d$: half-space-like open sets
and exterior open sets. These open sets can be disconnected. We
focus in particular on explicit estimates for $p_D(t,x,y)$ for all
$t>0$ and $x, y\in D$. Our approach is based on the idea that for
$x$ and $y$ in $D$ far from the boundary and $t$ sufficiently large,
we can compare $p_D(t,x,y)$ to the heat kernel in a well understood
open set: either a half-space or $\R^d$;
 while for the general case we can reduce them
to the above case by pushing $x$ and $y$ inside away from the boundary.
As a consequence, sharp Green functions estimates are obtained for
the Dirichlet fractional Laplacian in these two types of
open sets.
Global sharp heat kernel estimates and Green function estimates
are also obtained for censored stable processes (or equivalently,
for regional fractional Laplacian) in exterior open sets.
\end{abstract}

\bigskip
\noindent {\bf AMS 2000 Mathematics Subject Classification}: Primary
60J35, 47G20, 60J75; \\Secondary 47D07

\bigskip\noindent
{\bf Keywords and phrases}: symmetric stable process,
fractional Laplacian, censored stable
process,  heat kernel, transition density function, Green function,
parabolic Harnack inequality, comparison method

\bigskip

\section{Introduction}
Fix an integer $d \ge 1$ and a real number $\alpha \in (0,2)$.   A $d$-dimensional symmetric $\alpha$-stable process is a L\'evy process $X = \{ X_t, t \ge 0, \Pr_x, x \in \R^d \}$ such that
\begin{align*}
    \E_x\left[ e^{i\ip{\xi, X_t - X_0}} \right] = e^{-t|\xi|}
    \qquad
    \text{for all $x \in \R^d$ and $\xi \in \R^d$.}
\end{align*}
Here $\ip{x, y}$ denotes the inner product for $x, y \in \R^d$.
The infinitesimal generator of a symmetric $\alpha$-stable process $X$ in $\R^d$ is the fraction Laplacian $-(-\Delta)^{\alpha/2}$, a prototype of nonlocal operators.  The fractional Laplacian can be written in the form
\begin{align*}
    -(-\Delta)^{\alpha/2}u(x) = c \lim_{\varepsilon \downarrow 0}
    \int_{\{y \in \R^d: |y-x| > \varepsilon\}} \left( u(y) - u(x) \right)\frac{dy}{|x-y|^{d+\alpha}}
\end{align*}
for some specific constant $c = c(d,\alpha)$ that depends only on $d$ and $\alpha$..

A symmetric $\alpha$-stable process $X$ has a H\"older continuous transition density $p(t,x,y)$ defined on $(0,\infty)\times \R \times \R$, also known in literature as the heat kernel for the fractional Laplacian.  It is well known (see, e.g., \cite{CK}) that there exists a constant $C_1 = C_1(d,\alpha) > 0$ such that
\begin{align}
    C_1^{-1} \left( t^{-d/\alpha} \wedge \frac{t}{|x-y|^{d+\alpha}} \right)
    \le p(t,x,y)
    \le C_1 \left( t^{-d/\alpha} \wedge \frac{t}{|x-y|^{d+\alpha}} \right)
    \label{E:global-bound}
\end{align}
for all $x,y \in \R$ and $t>0$.  Here and in the sequel, $|x-y|$ denotes the  Euclidean distance between $x$ and $y$, and for two real numbers $a$ and $b$ we set $a\wedge b := \min\{a,b\}$ and $a\vee b := \max\{a,b\}$.  For two nonnegative functions $f$ and $g$ the notation $f\asymp g$ means that there are positive constants $c_1$ and $c_2$ such that $c_1f(x) \le g(x) \le c_2f(x)$ for all $x$ in the common domain of $f$ and $g$.

For an open subset $D \subset \mathbb R^d$, we define $X^D$ to be the subprocess of $X$ killed upon  exiting $D$.  That is, if we write $\tau_D := \inf\left\{ t > 0 : X_t \not\in D \right\}$ for the exit time of $X$ from $D$, then
$X^D_t(\omega) = X_t(\omega)$ when $t<\tau_D (\omega)$ and $X^D_t(\omega)= \partial $ when $t\geq \tau_D(\omega)$. Here $\partial $ is a cemetery point added to $\R^d$.
  The infinitesimal generator of $X^D$ is the Dirichlet fractional Laplacian $-(-\Delta)^{\alpha/2}\vert_D$ (the fractional Laplacian with zero exterior condition).  The transition density of $X^D$ (equivalently, the heat kernel for the Dirichlet fractional Laplacian)  will be denoted by $p_D(t,x,y)$. By Dynkin's formula, one has for $x, y\in D$ that
\begin{align}\label{E:dynkin's-formula}
    p_D(t,x,y) = p(t,x,y) - \E_x\left[ p\left( t-\tau_D, X_{\tau_D}, y \right);
    \tau_D < t \right].
\end{align}

Recall that an open subset $D\subset \R^d$ is said to be a $C^{1,1}$ open set if there exist a localization radius $R_0 > 0$ and a constant $\Lambda_0 > 0$ such that for every $z \in \partial D$ there is a $C^{1,1}$ function $\phi = \phi_z : \mathbb R^{d-1} \to \mathbb R$ satisfying $\phi(0) = \nabla \phi(0) = 0$, $\|\nabla \phi \|_\infty \le \Lambda_0$, $|\nabla \phi(x) - \nabla \phi(z)| \le \Lambda_0|x-z|$, and an orthonormal coordinate system $y = (y_1, \dots, y_{d-1},y_d) := (\tilde{y},y_d)$ such that $B(z,R_0)\cap D = B(z,R_0)\cap \left\{ y: y_d > \phi(\tilde{y}) \right\}$.  Together $R_0$ and $\Lambda_0$ are called the $C^{1,1}$ characteristics of $D$.

The following was recently established in \cite{CKS}.

\begin{thm}[Theorem 1.1 of \cite{CKS}]\label{T:1.1}
 Let $D$ be a $C^{1,1}$ open
subset of $\R^d$ with $d\geq 1$ and $\delta_D(x)$ the Euclidean
distance between $x$ and $D^c$.
\begin{description}
\item{\rm (i)} For every $T>0$, on $(0, T]\times D\times D$,
$$
p_D(t, x, y)\,\asymp\, \left( 1\wedge
\frac{\delta_D(x)}{t^{1/\alpha}}\right)^{\alpha/2} \left( 1\wedge
\frac{\delta_D(y)}{t^{1/\alpha}}\right)^{\alpha/2} \left( t^{-d/\alpha}
\wedge \frac{t}{|x-y|^{d+\alpha}}\right) .
$$
\item{\rm (ii)} Suppose in addition that $D$ is bounded.
For every $T>0$, there are positive constants $c_1<c_2$ so that on
$[T, \infty)\times D\times D$,
$$
c_1\, e^{-\lambda_1 t}\, \delta_D (x)^{\alpha/2}\, \delta_D
(y)^{\alpha/2} \,\leq\,  p_D(t, x, y) \,\leq\, c_2\, e^{-\lambda_1
t}\, \delta_D (x)^{\alpha/2} \,\delta_D (y)^{\alpha/2} ,
$$
where $\lambda_1>0$ is the smallest eigenvalue of the Dirichlet
fractional Laplacian $(-\Delta)^{\alpha/2}|_D$.
\end{description}
\end{thm}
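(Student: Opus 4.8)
The plan is to deduce part~(ii) from part~(i) together with elementary spectral theory, and to concentrate the effort on part~(i), which carries the real content. For~(ii): since $D$ is bounded, the Dirichlet fractional Laplacian has discrete spectrum $0<\lambda_1<\lambda_2\le\cdots$ with an orthonormal basis of eigenfunctions $\{\phi_n\}$, $\phi_1>0$, and $p_D(t,x,y)=\sum_n e^{-\lambda_n t}\phi_n(x)\phi_n(y)$. Writing, for $t\ge T$,
\[
  p_D(t+2,x,y)=\int_D\int_D p_D(1,x,z)\,p_D(t,z,w)\,p_D(1,w,y)\,dz\,dw ,
\]
I would bound the middle factor above by $c\,e^{-\lambda_1 t}$ on $D\times D$ (from $p_D(t,z,w)\le\|p_D(t/2,z,\cdot)\|_2\,\|p_D(t/2,w,\cdot)\|_2$ and $\|p_D(t/2,z,\cdot)\|_2^2=p_D(t,z,z)\le c\,e^{-\lambda_1 t}$ via the spectral expansion) and below by $c\,e^{-\lambda_1 t}$ on a fixed compact $K\subset D$ (from $\phi_1>0$ on $K$ and smallness of the tail of the expansion for $t$ large); applying part~(i) at time $1$, namely $\int_D p_D(1,x,z)\,dz=\Pr_x(\tau_D>1)\asymp\delta_D(x)^{\alpha/2}$ (finite because $D$ is bounded), then produces the factor $\delta_D(x)^{\alpha/2}\delta_D(y)^{\alpha/2}$ and the rate $e^{-\lambda_1 t}$, the bounded range of times being covered by part~(i) directly. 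So (ii) is essentially routine once (i) is in hand.

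For part~(i) I would first establish two preliminary estimates. The first is the sharp survival probability
\[
  \Pr_x(\tau_D>t)\;\asymp\;\Big(1\wedge\frac{\delta_D(x)}{t^{1/\alpha}}\Big)^{\alpha/2},\qquad t\in(0,T].
\]
A $C^{1,1}$ open set satisfies uniform interior and exterior ball conditions with some radius $r_0=r_0(R_0,\Lambda_0)>0$; combining this with the Blumenthal--Getoor--Ray explicit formulas for the exit time and exit distribution of a ball by $X$ — using the monotonicity $\Pr_x(\tau_{B}>t)\le\Pr_x(\tau_D>t)$ for an inscribed ball $B$ and a complementary upper bound via an exterior tangent ball — yields the exponent $\alpha/2$. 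The second is the interior estimate: if $\delta_D(x)\wedge\delta_D(y)\ge\kappa\,t^{1/\alpha}$ then $p_D(t,x,y)\asymp p(t,x,y)\asymp t^{-d/\alpha}\wedge\frac{t}{|x-y|^{d+\alpha}}$; here the upper bound is immediate from $p_D\le p$ and \eqref{E:global-bound}, while the lower bound follows from the parabolic Harnack inequality (or directly by forcing $X$ to stay in a ball $B(x,c\,t^{1/\alpha})\subset D$ up to time $t$ and comparing with the heat kernel of that ball, supplemented by a L\'evy system argument in the long-jump regime $|x-y|\gg t^{1/\alpha}$).

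The general upper bound in part~(i) then follows from the semigroup identity
\[
  p_D(t,x,y)=\int_D\int_D p_D(t/3,x,z)\,p_D(t/3,z,w)\,p_D(t/3,w,y)\,dz\,dw .
\]
Bounding the middle factor by $c\,t^{-d/\alpha}$ and integrating the outer ones against $dz$ and $dw$ gives $p_D(t,x,y)\le c\,t^{-d/\alpha}\,\Pr_x(\tau_D>t/3)\,\Pr_y(\tau_D>t/3)$, which is the asserted bound when $|x-y|\lesssim t^{1/\alpha}$; when $|x-y|>t^{1/\alpha}$, one of the three legs has length $\ge|x-y|/3$, so the corresponding $p_D$ is $\le p\le c\,t/|x-y|^{d+\alpha}$ by \eqref{E:global-bound}, and retaining that factor while integrating out the other two to produce both boundary factors yields the sharp off-diagonal bound. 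For the lower bound — the delicate step — I would use the interior ball condition to choose $x_0,y_0\in D$ with $\delta_D(x_0)\wedge\delta_D(y_0)\ge c_0\,t^{1/\alpha}$ and $|x-x_0|\vee|y-y_0|\le c_1\,t^{1/\alpha}$, show by comparison with the exit distribution of an inscribed ball that $\Pr_x\big(X^D_{t/4}\in B(x_0,\varepsilon t^{1/\alpha})\big)\gtrsim\big(1\wedge\delta_D(x)/t^{1/\alpha}\big)^{\alpha/2}$ and symmetrically at $y$, and then chain these ``push away from the boundary'' estimates with the interior estimate between $x_0$ and $y_0$ via
\[
  p_D(t,x,y)\ge\int_{B(x_0,\varepsilon t^{1/\alpha})}\int_{B(y_0,\varepsilon t^{1/\alpha})} p_D(t/4,x,z)\,p_D(t/2,z,w)\,p_D(t/4,w,y)\,dz\,dw ,
\]
the middle heat kernel being estimated below by the interior estimate (Harnack / L\'evy system) and the outer integrals supplying the two boundary factors.

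The main obstacle I anticipate is pinning down the exact boundary exponent $\alpha/2$, uniformly over $x$ near $\partial D$, in both the survival estimate and the ``push'' estimate: this is precisely where a crude comparison produces the wrong power, and where the $C^{1,1}$ regularity must be exploited through the uniform interior and exterior ball conditions together with the explicit ball computations for $X$. A secondary nuisance is combining the boundary decay with the polynomial long-jump decay $t/|x-y|^{d+\alpha}$ in the off-diagonal upper bound without sacrificing either factor; the L\'evy system formula is what handles that bookkeeping.
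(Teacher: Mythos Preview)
The paper does not prove this theorem at all: it is quoted as ``Theorem~1.1 of \cite{CKS}'' and used throughout as a black-box input (for instance in Lemma~\ref{L:ratio}, in the proof of Lemma~\ref{L:half-space}, and in the proofs of Theorems~\ref{T:half-space-like} and~\ref{T:exterior}). There is therefore no proof in this paper against which to compare your proposal.

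Your outline is a plausible high-level sketch of how results of this type are established --- spectral reduction for~(ii), survival-probability bounds plus interior estimates plus a semigroup/chaining argument for~(i) --- and is broadly in the spirit of the original \cite{CKS}. But you have correctly identified the genuine difficulty yourself: obtaining the sharp boundary exponent $\alpha/2$ uniformly, in both the survival estimate and the ``push'' step, is where the real work lies, and your proposal does not yet contain the mechanism that pins it down (in \cite{CKS} this is handled via the boundary Harnack principle and carefully localized test-function/exit-distribution estimates, not just the raw interior/exterior ball comparison you describe). Likewise, your off-diagonal upper bound argument --- ``one of the three legs has length $\ge |x-y|/3$, so bound that factor by $t/|x-y|^{d+\alpha}$ and integrate the other two to get both boundary factors'' --- does not work as stated: if the long leg is the middle one you lose both boundary factors, and if it is an outer leg you lose one of them; a more careful decomposition (or a L\'evy-system/Dynkin-formula argument tracking the first long jump) is needed to retain all three factors simultaneously. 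These are exactly the technical points that make \cite{CKS} a nontrivial paper in its own right, and why the present paper simply cites the result rather than reproving it.
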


In this paper, we further the study of heat kernel estimates for symmetric $\alpha$-stable
processes in unbounded $C^{1,1}$ open sets.
We will be concerned with global sharp two-sided estimates for $p_D(t,x,y)$ for two families of unbounded $C^{1,1}$ open sets $D$.
We focus in particular on explicit estimates for $p_D(t,x,y)$ for all
$t>0$ and $x, y \in D$ in terms of distance functions.
Define a half-space to be any isometry of the usual upper half-space
$\left\{ (x_1,\dots,x_d): x_d > 0 \right\}$.  We say that $D$ is
half-space-like if, after isometry,  $H_a \subset D \subset H_b$ for
some real numbers $a>b$. Here, for a real number $a$, $H_a:=\left\{
(x_1,\dots,x_d)\in \R^d: x_d > a \right\}$.
  Note that domains lying above the graph of bounded $C^{1,1}$ functions
 are half-space-like $C^{1,1}$ connected open sets.
 We say that $D$ is
an exterior open set if $D^c$ is compact.
 Our main results are the following.

\begin{thm}\label{T:half-space-like}
    Suppose $D$ is a half-space-like $C^{1,1}$ open set in $\R^d$ with $d\geq 1$.  Then
    for $\alpha \in (0, 2)$, on $\R_+\times D \times D$
    \begin{align}
        p_D(t,x,y) \asymp
        \left( 1\wedge \frac{\delta_D(x) }{t^{1/\alpha}} \right)^{\alpha/2}
        \left( 1\wedge \frac{\delta_D(y)}{t^{1/\alpha}} \right)^{\alpha/2}
        \left( t^{-d/\alpha}\wedge \frac{t}{|x-y|^{d+\alpha}} \right).
        \label{E:half-space-like}
    \end{align}
\end{thm}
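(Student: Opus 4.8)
The plan is to treat small time $t\in(0,T]$ and large time $t\ge T$ separately, with $T=T(D)>0$ fixed large; on $(0,T]$, estimate~\eqref{E:half-space-like} is exactly Theorem~\ref{T:1.1}(i) (a half-space-like $C^{1,1}$ open set is in particular a $C^{1,1}$ open set), so it is enough to prove \eqref{E:half-space-like} for $t\ge T$ with constants uniform in $t$. After an isometry, $H_a\subset D\subset H_b$ with $a>b$, and two elementary observations drive everything. First, domain monotonicity of killed transition densities gives $p_{H_a}(t,x,y)\le p_D(t,x,y)\le p_{H_b}(t,x,y)$ for all $t>0$ and $x,y\in H_a$. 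Second, $\partial D\subset\{b\le z_d\le a\}$, so a \emph{deep} point $z$ (meaning $z_d>2a-b$) lies in $H_a$ and has $\delta_{H_a}(z)\asymp\delta_D(z)\asymp\delta_{H_b}(z)$ with universal constants, whereas a \emph{shallow} point $z$ ($b<z_d\le 2a-b$) has $\delta_D(z)$ and $\delta_{H_b}(z)=z_d-b$ both bounded by $2(a-b)$; enlarging $T$ we may also assume $t^{1/\alpha}>2(a-b)$ for $t\ge T$, so that $(1\wedge\delta_D(z)/t^{1/\alpha})^{\alpha/2}=\delta_D(z)^{\alpha/2}t^{-1/2}$ for shallow $z$ and $t\ge T$.

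The first step is to upgrade Theorem~\ref{T:1.1}(i) for a half-space $H$ to all $t>0$: after a translation $H$ is invariant under the dilations $z\mapsto c^{1/\alpha}z$, the self-similarity of $X$ gives $p_H(ct,c^{1/\alpha}x,c^{1/\alpha}y)=c^{-d/\alpha}p_H(t,x,y)$, and the right-hand side of \eqref{E:half-space-like} with $D=H$ is invariant under the same rescaling; applying Theorem~\ref{T:1.1}(i) at time $1$ then yields \eqref{E:half-space-like} for $H$ at all $t>0$. Together with the two observations above this already settles \eqref{E:half-space-like} when $x$ and $y$ are both deep, since then all quantities are sandwiched between $c\,p_{H_a}$ and $c'\,p_{H_b}$ and the three distance functions are comparable. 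It remains to handle the case that $x$ or $y$ is shallow.

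For the \emph{lower} bound in that case I would push the shallow point(s) into the interior. Given shallow $x$, choose $x^{\ast}=x+s_xe_d$ at a fixed height above the slab, so that $|x-x^{\ast}|$ is bounded and $\delta_D(x^{\ast})\asymp 1$, and fix a small ball $B_x\ni x^{\ast}$ contained in $D$ and consisting of deep points; do likewise for $y$ (taking $B_y$ around $y$ itself if $y$ is already deep). With $T_0=T/3$,
\[
  p_D(t,x,y)\ \ge\ \int_{B_x}\!\int_{B_y} p_D(T_0,x,z)\,p_D(t-2T_0,z,w)\,p_D(T_0,w,y)\,dw\,dz .
\]
By Theorem~\ref{T:1.1}(i), $p_D(T_0,x,z)$ is bounded below by a constant times $\delta_D(x)^{\alpha/2}$ for $z\in B_x$ (the other factors are bounded below because $|x-z|$ and $\delta_D(z)^{-1}$ are bounded), and similarly for the $y$-factor, while the deep-point case just proved bounds $p_D(t-2T_0,z,w)$ below by a constant times $t^{-1}\bigl(t^{-d/\alpha}\wedge t|x-y|^{-(d+\alpha)}\bigr)$ for $z\in B_x$, $w\in B_y$ and $t\ge T$. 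Multiplying, and using $|B_x|\asymp|B_y|\asymp 1$, gives the lower bound in \eqref{E:half-space-like}; the argument uses only the jumps of $X$, so it is insensitive to $D$ being disconnected.

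The main point is the matching \emph{upper} bound for shallow points, where domain monotonicity is useless since $\delta_{H_b}(x)=x_d-b$ may far exceed $\delta_D(x)$. The idea is to peel off the decay at each shallow point with one short-time step controlled by Theorem~\ref{T:1.1}(i), and to bound the rest by $p_D\le p_{H_b}$ \emph{without discarding the half-space boundary factors}. Fix $\tau\le T$ with $\tau^{1/\alpha}\ge 2(a-b)$; for shallow $x$ Theorem~\ref{T:1.1}(i) gives $p_D(\tau,x,z)\le c\,\delta_D(x)^{\alpha/2}\,\psi(x-z)$ with $\psi(u):=\tau^{-d/\alpha}\wedge\tau|u|^{-(d+\alpha)}\asymp p(\tau,0,u)$ by~\eqref{E:global-bound}. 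If $x$ and $y$ are both shallow, inserting this into $p_D(t,x,y)=\int\!\int p_D(\tau,x,z)\,p_D(t-2\tau,z,w)\,p_D(\tau,w,y)\,dz\,dw$, estimating the middle factor by $p_D\le p_{H_b}$ and the global half-space estimate, and pulling out $\delta_D(x)^{\alpha/2}\delta_D(y)^{\alpha/2}$, leads to
\[
  p_D(t,x,y)\le c\,\delta_D(x)^{\alpha/2}\delta_D(y)^{\alpha/2}\!\int\!\!\int \psi(x-z)\Bigl(1\wedge\tfrac{\delta_{H_b}(z)}{t^{1/\alpha}}\Bigr)^{\alpha/2}\Bigl(1\wedge\tfrac{\delta_{H_b}(w)}{t^{1/\alpha}}\Bigr)^{\alpha/2}\Bigl(t^{-d/\alpha}\wedge\tfrac{t}{|z-w|^{d+\alpha}}\Bigr)\psi(w-y)\,dz\,dw .
\]
The crux is the estimate $\int_D\psi(v-u)(1\wedge\delta_{H_b}(u)/t^{1/\alpha})^{\alpha/2}\,du\le c\,t^{-1/2}$ for shallow $v$: the bump $\psi(v-\cdot)$ concentrates within $O(\tau^{1/\alpha})$ of $v$, where $\delta_{H_b}\le 2(a-b)+\tau^{1/\alpha}$ is controlled, so that factor contributes the gain $t^{-1/2}$, and the tail is handled by splitting the $u$-integral at the level $\delta_{H_b}(u)\asymp t^{1/\alpha}$. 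Applying this on both sides, and splitting the $z$- and $w$-integrals according to whether $|z-x|,|w-y|$ are small or large compared with $|x-y|$ in order to recover the off-diagonal decay, gives $p_D(t,x,y)\le c\,\delta_D(x)^{\alpha/2}\delta_D(y)^{\alpha/2}\,t^{-1}\bigl(t^{-d/\alpha}\wedge t|x-y|^{-(d+\alpha)}\bigr)$, the right-hand side of \eqref{E:half-space-like}. The case $x$ shallow, $y$ deep is analogous, performing the short step only at $x$ and using $p_D(t-\tau,z,y)\le p_{H_b}(t-\tau,z,y)$, where now $\delta_{H_b}(y)\asymp\delta_D(y)$ entails no loss. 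I expect the most delicate part to be this last step — arranging the short-time increment and the spatial splitting so that both boundary factors and the correct off-diagonal decay $t^{-d/\alpha}\wedge t|x-y|^{-(d+\alpha)}$ appear simultaneously, with constants uniform over $t\ge T$.
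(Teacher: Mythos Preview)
Your proof outline is correct, but the paper takes a considerably cleaner route that avoids the convolution estimate you single out as ``the most delicate part.'' The common ingredients are the same: Theorem~\ref{T:1.1}(i) for small time, the scaling argument upgrading the half-space estimate to all $t>0$ (Lemma~\ref{L:half-space}), and the sandwich $p_{H_a}\le p_D\le p_{H_b}$. The difference is in how shallow points are handled. You treat upper and lower bounds separately: for the lower bound you restrict to small balls around pushed points (this is fine), and for the upper bound you discard the $z$-boundary factor in Theorem~\ref{T:1.1}(i), bound the middle by $p_{H_b}$, and then carry out a double-convolution estimate requiring $\int \psi(v-u)\bigl(1\wedge\delta_{H_b}(u)/t^{1/\alpha}\bigr)^{\alpha/2}\,du\le ct^{-1/2}$ together with a spatial case-split on $|z-x|,|w-y|$ versus $|x-y|$. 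This does work (the tail splits as you indicate, and the off-diagonal regions give the right bound after bounding $\psi$ or the middle kernel pointwise), but it is laborious. The paper instead uses Lemma~\ref{L:ratio}: for a fixed push $x_0=x+2t_0^{1/\alpha}e_d$, one has the \emph{two-sided, uniform in $z$} comparison $p_D(t_0,x,z)\asymp\bigl(1\wedge\delta_D(x)^{\alpha}\bigr)^{1/2}\,p_D(t_0,x_0,z)$. Inserting this on both sides of the Chapman--Kolmogorov identity, the $z,w$ integrals collapse back to $p_D(t,x_0,y_0)$ exactly, giving $p_D(t,x,y)\asymp\bigl(1\wedge\delta_D(x)^{\alpha}\bigr)^{1/2}\bigl(1\wedge\delta_D(y)^{\alpha}\bigr)^{1/2}p_D(t,x_0,y_0)$ in one stroke, upper and lower bounds simultaneously; then the half-space sandwich and a simple comparison of distance functions (Lemma~\ref{L:inward-push-comparison}) finishes. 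The gain is that no integral ever needs to be estimated; the price is that one needs the ratio lemma, which is itself an easy consequence of Theorem~\ref{T:1.1}(i). Your approach, by contrast, buys nothing extra here, though the convolution technique it uses is sometimes unavoidable in settings where a clean ratio estimate is not available.
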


\begin{thm}\label{T:exterior}
    Suppose $D$ is an exterior open $C^{1,1}$ set in $\R^d$ with  $d \geq 1$ and
    $d>\alpha$.  Then on $\R_+\times D \times D$
    \begin{align}
        p_D(t,x,y) \asymp
        \left( 1\wedge \frac{\delta_D(x)}{1\wedge t^{1/\alpha}} \right)^{\alpha/2}
        \left( 1\wedge \frac{\delta_D(y)}{1\wedge t^{1/\alpha}} \right)^{\alpha/2}
        \left( t^{-d/\alpha}\wedge \frac{t}{|x-y|^{d+\alpha}} \right).
        \label{E:exterior}
    \end{align}
\end{thm}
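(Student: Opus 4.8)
\noindent\emph{Proof plan for Theorem~\ref{T:exterior}.}
The plan is to treat the small-time and large-time ranges separately. For $t\in(0,1]$ one has $1\wedge t^{1/\alpha}=t^{1/\alpha}$, so the right-hand side of \eqref{E:exterior} is precisely the two-sided bound of Theorem~\ref{T:1.1}(i); as $D$ is a $C^{1,1}$ open set, that theorem applied with $T=1$ already yields \eqref{E:exterior} on $(0,1]\times D\times D$. Hence the entire content lies in the range $t\ge1$, where \eqref{E:exterior} is equivalent to
\begin{equation*}
 p_D(t,x,y)\asymp \bigl(1\wedge\delta_D(x)\bigr)^{\alpha/2}\bigl(1\wedge\delta_D(y)\bigr)^{\alpha/2}\Bigl(t^{-d/\alpha}\wedge\frac{t}{|x-y|^{d+\alpha}}\Bigr),\qquad t\ge1.
\end{equation*}

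I would next reduce this to an interior estimate. Fix $R_0>0$ with $D^c\subset B(0,R_0)$; since $D^c$ is compact and $\partial D\subset\overline{B(0,R_0)}$, for every $x\in D$ there is $x^*\in D$ with $\delta_D(x^*)\ge3$ and $|x-x^*|\le c_0$, where $c_0$ depends only on $R_0$ (take $x^*=x$ if $|x|\ge R_0+3$, and $x^*=(R_0+3)e_1$ otherwise); choose $y^*$ for $y$ in the same way. For $t\ge3$, Chapman--Kolmogorov gives
\begin{equation*}
 p_D(t,x,y)=\int_D\int_D p_D(1,x,z)\,p_D(t-2,z,w)\,p_D(1,w,y)\,dz\,dw.
\end{equation*}
For the lower bound one restricts the integral to $z\in B(x^*,1)$ and $w\in B(y^*,1)$: on these balls $\delta_D(z),\delta_D(w)\ge2$ and $|x-z|,|y-w|\le c_0+1$, so Theorem~\ref{T:1.1}(i) (with $T=1$) gives $p_D(1,x,z)\ge c\,(1\wedge\delta_D(x))^{\alpha/2}$ and $p_D(1,w,y)\ge c\,(1\wedge\delta_D(y))^{\alpha/2}$, while $|z-w|\asymp 1+|x-y|$; the lower half of \eqref{E:exterior} then follows once one knows the \emph{interior estimate} that $p_D(s,z,w)\gtrsim s^{-d/\alpha}\wedge s\,|z-w|^{-(d+\alpha)}$ whenever $s\ge1$ and $\delta_D(z),\delta_D(w)\ge1$. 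For the matching upper bound, Theorem~\ref{T:1.1}(i) gives $p_D(1,x,z)\le c\,(1\wedge\delta_D(x))^{\alpha/2}\,p(1,x,z)$ and the symmetric estimate for $p_D(1,w,y)$ (using $p(1,x,z)\asymp 1\wedge|x-z|^{-(d+\alpha)}$ from \eqref{E:global-bound}), and $p_D(t-2,z,w)\le p(t-2,z,w)$; extending the $z$- and $w$-integrals to $\R^d$ and applying Chapman--Kolmogorov for the free kernel $p$ bounds the triple integral by $c\,(1\wedge\delta_D(x))^{\alpha/2}(1\wedge\delta_D(y))^{\alpha/2}\,p(t,x,y)$, which by \eqref{E:global-bound} is of the required form. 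The leftover range $1\le t\le3$ is absorbed by taking $T=3$ in Theorem~\ref{T:1.1}(i).

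It remains to establish the interior estimate. Its upper bound is immediate from \eqref{E:dynkin's-formula} (the subtracted term being nonnegative) and \eqref{E:global-bound}. For the lower bound I would argue in three steps. \emph{(a) Near-diagonal bound:} $p_D(t,x,x)\gtrsim t^{-d/\alpha}$ for $\delta_D(x)\ge1$, $t\ge1$. Because $d>\alpha$, $X$ is transient, so $\P_x(\tau_D=\infty)\ge\kappa_0>0$ uniformly over $\{\delta_D\ge1\}$ (here the compactness of $D^c$ is used); combining this with a crude tail bound $\P_x(|X_{t/2}-x|>Kt^{1/\alpha})\le CK^{-\alpha}$ gives $\int_{B(x,Kt^{1/\alpha})}p_D(t/2,x,z)\,dz=\P_x(\tau_D>t/2,\,X_{t/2}\in B(x,Kt^{1/\alpha}))\ge\kappa_0/2$ for a suitable $K$, whence, by symmetry of $p_D$ and Cauchy--Schwarz,
\begin{equation*}
 p_D(t,x,x)=\int_D p_D(t/2,x,z)^2\,dz\ \ge\ \frac{1}{|B(x,Kt^{1/\alpha})|}\Bigl(\int_{B(x,Kt^{1/\alpha})}p_D(t/2,x,z)\,dz\Bigr)^2\ \gtrsim\ t^{-d/\alpha}.
\end{equation*}
\emph{(b) Moderate range} $1\le|x-y|\le t^{1/\alpha}$: chain (a) by the parabolic Harnack inequality for $X^D$ along a bounded chain of unit balls contained in $D$ at distance $\ge1$ from $D^c$ (which can be routed around the bounded set $D^c$). \emph{(c) Far range} $|x-y|>t^{1/\alpha}$: a L\'evy-system computation shows that, with probability $\gtrsim t\,|x-y|^{-(d+\alpha)}$, the process makes a single jump into $B(y,1)$ during an initial interval of length $t/2$ while staying in $D$ (and, as in (a), within $O(t^{1/\alpha})$ of $x$ until the jump), after which (a)--(b) handle the passage to $y$.

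The step I expect to be the main obstacle is this interior lower bound: above all the near-diagonal estimate (a), where transience ($d>\alpha$) must genuinely be exploited to keep $\kappa_0$ bounded away from $0$ uniformly on $\{\delta_D\ge1\}$ and one must control the dispersal of $X_{t/2}$ under killing, and the far range (c), where the L\'evy-system estimate must be merged with (a)--(b) with all constants kept independent of $x,y,t$. Everything else amounts to bookkeeping with \eqref{E:dynkin's-formula}, Chapman--Kolmogorov, Theorem~\ref{T:1.1}(i), and \eqref{E:global-bound}.
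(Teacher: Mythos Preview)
Your overall architecture---settle $t\le T$ by Theorem~\ref{T:1.1}(i), peel off the boundary factors via Chapman--Kolmogorov at unit time, and reduce to an interior comparison $p_D\asymp p$ for points away from $\partial D$---matches the paper's (there the peeling is done more symmetrically through Lemma~\ref{L:ratio} with $x_0=x+Rv$, $y_0=y+Rv$ for a common $v$, so that $p(t,x_0,y_0)=p(t,x,y)$ by translation invariance; your separate upper/lower arguments are fine too). Your on-diagonal step~(a) is also essentially the paper's (Lemma~\ref{L:epsilon} plus the Cauchy--Schwarz argument leading to~\eqref{eq:on-diagonal}). The genuine gap is in step~(b), which you treat as routine but is in fact the crux.

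A ``bounded chain of unit balls'' is a contradiction once $|x-y|$ is large: a chain of radius-$1$ balls from $x$ to $y$ has $N\asymp|x-y|$ links, and each parabolic Harnack application costs a fixed multiplicative constant, so you only get $p_D(t,x,y)\gtrsim c^{-|x-y|}t^{-d/\alpha}$, not $t^{-d/\alpha}$. The paper warns explicitly that chaining yields exponential rather than polynomial lower bounds and is ``inappropriate in the stable process case.'' (For $d=1$ with $\alpha<1$ the exterior set is even disconnected, so no chain inside $D$ links the two components at all.) The paper's substitute exploits the exterior geometry: if $|x|,|y|>3R_0$ and $|y|\ge|x|$, then $|x-y|\le 2|y|<3\,\delta_D(y)$, so a \emph{single} scale $\asymp\delta_D(y)$ suffices. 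Concretely (Lemma~\ref{L:exterior-lb-large-time}, Case~2), with $t_0=(\delta_D(y)/3)^\alpha$ one has $y\in B(x,9t_0^{1/\alpha})$, so Lemma~\ref{L:epsilon} gives $\Pr_x\bigl(X^D_{t_0}\in B(y,t_0^{1/\alpha})\bigr)\ge\varepsilon$; one Harnack step (Lemma~\ref{L:phi}) and the on-diagonal bound then finish. When instead $\delta_D(y)>3\Lambda t^{1/\alpha}$, a half-space $H\subset D$ contains both $x$ and $y$ with $\delta_H\gtrsim t^{1/\alpha}$, and $p_D\ge p_H$ plus Lemma~\ref{L:half-space} gives the bound directly.

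Step~(c) inherits the problem. Landing in $B(y,1)$ with probability $\gtrsim t|x-y|^{-(d+\alpha)}$ and then invoking the on-diagonal/Harnack bound yields only $t^{-d/\alpha}\cdot t|x-y|^{-(d+\alpha)}$, off by $t^{d/\alpha}$; enlarging the target to $B(y,ct^{1/\alpha})$ repairs the arithmetic but then requires~(b) at full strength for $|z-y|\sim t^{1/\alpha}$. The paper sidesteps this entirely via Dynkin's formula~\eqref{E:dynkin's-formula} (Lemma~\ref{L:exterior-lb-large-distance}): with $B=\overline{B(0,R_0)}$ and $T=T_B$,
\[
p_D(t,x,y)\ \ge\ p(t,x,y)-\E_x\bigl[p(t-T,X_T,y);\,T<t\bigr],
\]
and since $|X_T-y|\ge|x-y|/4$ on $\{T<\infty\}$ the subtraction is at most $C\,t|x-y|^{-(d+\alpha)}\cdot\Pr_x(T<\infty)$, which by Lemma~\ref{L:C0} is a small fraction of $p(t,x,y)$ once $|x|$ exceeds a large enough $R_3$.
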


It's worth taking a moment to look at the forms of these estimates.
The estimate in \eqref{E:half-space-like} is the same estimate that
appears in Theorem \ref{T:1.1}(i).  This suggests that for a
half-space-like open set, the large time behavior is not
qualitatively different from the small time behavior.  Contrast this
with \eqref{E:exterior}, the estimate for an exterior open set,
where for large times $t$ the boundary terms do not depend on $t$.
The difference reflects the fact that the symmetric $\alpha$-stable process
$X$ will hit $D^c$ with probability one when $D$ is a
half-space-like open set; with positive probability, however, $X$ wanders
to infinity without hitting $D^c$ when $D$ is an exterior open set
in $\R^d$ with $d>\alpha$.

The Green function of $X$ in $D$ is given by $G_D(x,y) = \int_0^\infty p_D(t,x,y)dt$.  By integrating the estimates that appear in Theorems \ref{T:half-space-like} and \ref{T:exterior} with respect to time, we obtain the following estimates for $G_D(x,y)$.

\begin{corollary}\label{C:half-space-like}
    Suppose $D$ is a half-space-like $C^{1,1}$ open set in $\R^d$ with $d\geq 1$.
        \begin{align*}
            G_D(x,y) \asymp \begin{cases}
                \frac{1}{|x-y|^{d-\alpha}}
                \left( 1\wedge \frac{\delta_D(x)\delta_D(y)}{|x-y|^2} \right)^{\alpha/2} & \text{when $d>\alpha$,} \\
                \log\left( 1 + \frac{\delta_D(x)^{\alpha/2}\delta_D(y)^{\alpha/2}}{|x-y|^\alpha} \right) & \text{when $d=1=\alpha$,} \\
                \left( \delta_D(x)\delta_D(y) \right)^{(\alpha-1)/2}
                \wedge \frac{\delta_D(x)^{\alpha/2}\delta_D(y)^{\alpha/2}}{|x-y|}
                & \text{when $d=1<\alpha$.}
            \end{cases}
        \end{align*}
\end{corollary}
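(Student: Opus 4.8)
The plan is to obtain the Green function estimates by directly integrating the two-sided heat kernel bound \eqref{E:half-space-like} over $t\in(0,\infty)$. Write $r:=|x-y|$, $s:=\delta_D(x)$, $u:=\delta_D(y)$ (assume $s\le u$ without loss of generality, by symmetry). The integrand is
\begin{align*}
    I(t):=\left( 1\wedge \frac{s}{t^{1/\alpha}} \right)^{\alpha/2}
    \left( 1\wedge \frac{u}{t^{1/\alpha}} \right)^{\alpha/2}
    \left( t^{-d/\alpha}\wedge \frac{t}{r^{d+\alpha}} \right),
\end{align*}
and $G_D(x,y)\asymp\int_0^\infty I(t)\,dt$. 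The idea is to break $(0,\infty)$ into the natural regimes determined by the three breakpoints $t=s^\alpha$, $t=u^\alpha$, and $t=r^\alpha$ (the last being where $t^{-d/\alpha}$ and $t/r^{d+\alpha}$ cross), integrate the resulting power-law pieces, and collect terms.

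First I would handle the case $d>\alpha$. When $r$ is comparable to or smaller than $s\wedge u$ (i.e.\ $r\lesssim s$), the boundary factors are essentially $1$ on the relevant range of $t$ and one just integrates $t^{-d/\alpha}\wedge t r^{-(d+\alpha)}$, which gives $\asymp r^{\alpha-d}$; this matches the claimed $r^{\alpha-d}(1\wedge su/r^2)^{\alpha/2}\asymp r^{\alpha-d}$. When $r\gtrsim u\ (\ge s)$, I split $\int_0^\infty = \int_0^{s^\alpha}+\int_{s^\alpha}^{u^\alpha}+\int_{u^\alpha}^{r^\alpha}+\int_{r^\alpha}^\infty$. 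On the first piece both boundary factors are $1$, the time factor is $tr^{-(d+\alpha)}$, giving $\asymp s^{2\alpha}r^{-(d+\alpha)}$. On the second, the $x$-factor contributes $s^{\alpha/2}t^{-1/2}$, and one gets another term of the same order $\asymp s^{2\alpha}r^{-(d+\alpha)}$ up to constants (more precisely the dominant contribution is of order $s^{\alpha/2}u^{3\alpha/2}r^{-(d+\alpha)}$ — I would track this carefully). On the third piece both boundary factors are $s^{\alpha/2}u^{\alpha/2}t^{-1}$, time factor $tr^{-(d+\alpha)}$, integrand $s^{\alpha/2}u^{\alpha/2}r^{-(d+\alpha)}$, giving $\asymp s^{\alpha/2}u^{\alpha/2}r^{\alpha}r^{-(d+\alpha)}=s^{\alpha/2}u^{\alpha/2}r^{-d}$. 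On the tail both boundary factors are $s^{\alpha/2}u^{\alpha/2}t^{-1}$ and the time factor is $t^{-d/\alpha}$, integrand $s^{\alpha/2}u^{\alpha/2}t^{-1-d/\alpha}$, giving $\asymp s^{\alpha/2}u^{\alpha/2}r^{-d}$ as well. Summing, the dominant term is $s^{\alpha/2}u^{\alpha/2}r^{-d}=r^{\alpha-d}\,(su/r^2)^{\alpha/2}$, which is the claimed expression since $su/r^2\lesssim 1$ here. The intermediate regime $s\lesssim r\lesssim u$ is handled the same way with one fewer interval and gives a consistent answer; the routine bookkeeping is to check that in every regime the sum of the pieces is comparable to $r^{\alpha-d}(1\wedge su/r^2)^{\alpha/2}$.

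The cases $d=1$ are dimensionally special because the tail $\int^\infty t^{-d/\alpha}(\dots)\,dt$ converges only marginally (when $d=\alpha=1$) or is dominated by the boundary decay (when $d=1<\alpha$), so the large-$t$ behaviour changes the form of the answer. For $d=1<\alpha$: on $(0,s^\alpha)$ one gets $\asymp s^{2\alpha}r^{-(1+\alpha)}$; the middle intervals give terms $\lesssim s^{\alpha/2}u^{\alpha/2}r^{-1}$; and the tail $\int_{s^\alpha\vee u^\alpha\vee r^\alpha}^\infty s^{\alpha/2}u^{\alpha/2}t^{-1-1/\alpha}\,dt\asymp s^{\alpha/2}u^{\alpha/2}(s\vee u\vee r)^{-1}$, which when $s\vee u\le r$ is $s^{\alpha/2}u^{\alpha/2}r^{-1}$ and otherwise is $\asymp (su)^{(\alpha-1)/2}$ (when $r\lesssim s\le u$, using $u\asymp s\asymp r$ is false in general — rather when $r\le u$ the tail is the dominant term and equals $s^{\alpha/2}u^{(\alpha-1)/2}$ up to the relation $s\le u$... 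I would instead argue: for $r\le (su)^{1/2}$ the answer is $(su)^{(\alpha-1)/2}$ and for $r\ge(su)^{1/2}$ it is $(su)^{\alpha/2}/r$, and $(su)^{(\alpha-1)/2}\wedge (su)^{\alpha/2}/r$ interpolates these, matching the claim). For $d=\alpha=1$: here $t^{-d/\alpha}=t^{-1}$ so the tail integral $\int^\infty (1\wedge s/t)(1\wedge u/t) t^{-1}dt$ behaves like $\int^\infty su\,t^{-3}dt$, convergent; carrying out all four regime integrals and summing produces, after simplification, $\asymp \log\bigl(1+ s^{\alpha/2}u^{\alpha/2}/r^\alpha\bigr)$ — the logarithm arises from the intermediate regime $\int_{s^\alpha}^{(\text{something})} t^{-1}\,dt$ when the arguments of the $\log$ are large, and from direct comparison $\log(1+w)\asymp w\wedge 1$ it matches a bounded expression when they are small.

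The main obstacle I expect is purely organizational: there are three breakpoints and hence up to four (or more, depending on their ordering) integration regimes, and the ordering of $s$, $u$, $r$ (six cases, reduced to three by $s\le u$, then further subdivided by where $r$ falls relative to $s$, $u$, and $\sqrt{su}$) must all be checked to collapse to the single clean formula in each dimensional case. None of the individual integrals is hard — they are all of the form $\int_a^b t^\gamma\,dt$ — but verifying that the sum of the surviving terms is \emph{comparable} (not equal) to the stated closed form in every case, and that the two marginal dimensions $d=1=\alpha$ and $d=1<\alpha$ genuinely produce the logarithm and the minimum respectively, requires care. A useful device to streamline this is to note that $1\wedge(su/r^2)^{\alpha/2}\asymp (s^{\alpha/2}u^{\alpha/2})/(s^{\alpha/2}u^{\alpha/2}+r^\alpha)\cdot(\text{something})$ — more cleanly, to use the elementary inequality $a\wedge b\asymp ab/(a+b)$ and $\log(1+w)\asymp w/(1+w)+\log(1+w)\mathbf 1_{w\ge1}$ to reduce each claimed right-hand side to a single rational-power expression, then match term by term.
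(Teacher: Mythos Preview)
Your approach is the same as the paper's at the top level---integrate the heat-kernel estimate \eqref{E:half-space-like} in $t$---but the organization differs. The paper makes the substitution $u=|x-y|^\alpha/t$ (see \eqref{E:Green-u}), which collapses your three breakpoints into a single scale variable, then splits the resulting $u$-integral only at $u=1$ and at $u=|x-y|^\alpha/T$ with $T=(\delta_D(y)\vee|x-y|)^\alpha$, invoking computations already carried out in \cite{CKS} for each piece (in particular \eqref{E:Green-u-large}). This sidesteps the ordering casework you anticipate. One device you are missing that the paper records explicitly as \eqref{e:2.7} and that would clean up your bookkeeping considerably: the Lipschitz bound $|\delta_D(x)-\delta_D(y)|\le|x-y|$ forces $u\le s+r$, so whenever $r\le(su)^{1/2}$ you automatically get $s\asymp u$, and in general $(1\wedge s/r)(1\wedge u/r)\asymp 1\wedge su/r^2$. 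With that constraint in hand, your direct regime-by-regime integration goes through; your $d>\alpha$ case is essentially complete, while your $d=1$ cases are only sketched (you yourself flag the loose ends in the $d=1<\alpha$ paragraph), but they would yield to the same method once the ordering is pinned down using $u\le s+r$.
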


\begin{corollary}\label{C:exterior}
    Suppose $D$ is an exterior $C^{1,1}$ open set in $\R^d$ with  $d \geq 1$ and $d>\alpha$.  Then
    \begin{align*}
        G_D(x,y) \asymp \frac{1}{|x-y|^{d-\alpha}}
        \left( 1\wedge \frac{\delta_D(x)^{\alpha/2}}{1\wedge |x-y|^{\alpha/2} } \right)
        \left( 1\wedge \frac{\delta_D(y)^{\alpha/2} }{1\wedge |x-y|^{\alpha/2} } \right) .
    \end{align*}
\end{corollary}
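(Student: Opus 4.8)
The plan is to integrate the two-sided heat kernel bound of Theorem~\ref{T:exterior} over $t\in(0,\infty)$. Write $a:=\delta_D(x)$, $b:=\delta_D(y)$ and $r:=|x-y|$. Since $G_D(x,y)=\int_0^\infty p_D(t,x,y)\,dt$, Theorem~\ref{T:exterior} gives
\[
  G_D(x,y)\;\asymp\;\int_0^\infty\Phi(t)\,dt,
\]
where
\[
  \Phi(t):=\left(1\wedge\frac{a}{1\wedge t^{1/\alpha}}\right)^{\alpha/2}
           \left(1\wedge\frac{b}{1\wedge t^{1/\alpha}}\right)^{\alpha/2}
           \left(t^{-d/\alpha}\wedge\frac{t}{r^{d+\alpha}}\right),
\]
so it suffices to evaluate $\int_0^\infty\Phi$ up to constants depending only on $d$ and $\alpha$. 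I would split this as $\int_0^1\Phi+\int_1^\infty\Phi$, the breakpoint $t=1$ being where the boundary factors of $\Phi$ cease to decay in $t$.

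On $(1,\infty)$ one has $1\wedge t^{1/\alpha}=1$, so the boundary factors equal the constant $(1\wedge a)^{\alpha/2}(1\wedge b)^{\alpha/2}$ and we are left with $\int_1^\infty\bigl(t^{-d/\alpha}\wedge\frac{t}{r^{d+\alpha}}\bigr)\,dt$. Here the hypothesis $d>\alpha$ makes $t^{-d/\alpha}$ integrable at $+\infty$; splitting the minimum at $t=r^\alpha$ when $r\ge1$, a direct computation gives $\int_1^\infty\bigl(t^{-d/\alpha}\wedge\frac{t}{r^{d+\alpha}}\bigr)\,dt\asymp 1\wedge r^{\alpha-d}$, hence
\[
  \int_1^\infty\Phi(t)\,dt\;\asymp\;(1\wedge a)^{\alpha/2}(1\wedge b)^{\alpha/2}\,\bigl(1\wedge r^{\alpha-d}\bigr).
\]

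On $(0,1]$ one has $1\wedge t^{1/\alpha}=t^{1/\alpha}$, and I would distinguish two cases. If $r\ge1$, then $t\le1\le r^\alpha$ on $(0,1]$, so $t^{-d/\alpha}\wedge\frac{t}{r^{d+\alpha}}=\frac{t}{r^{d+\alpha}}$; splitting at $t=a^\alpha$ and $t=b^\alpha$ one computes $\int_0^1\bigl(1\wedge\frac{a}{t^{1/\alpha}}\bigr)^{\alpha/2}\bigl(1\wedge\frac{b}{t^{1/\alpha}}\bigr)^{\alpha/2}t\,dt\asymp(1\wedge a)^{\alpha/2}(1\wedge b)^{\alpha/2}$, so $\int_0^1\Phi\asymp(1\wedge a)^{\alpha/2}(1\wedge b)^{\alpha/2}\,r^{-(d+\alpha)}$, which is dominated by the tail since $r^{-(d+\alpha)}\le r^{\alpha-d}$. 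If $r\le1$, I would split $(0,1]$ at the points $a^\alpha\wedge1$, $b^\alpha\wedge1$ and $r^\alpha$ and evaluate each subinterval; using $d>\alpha$ to handle the resulting power integrals, this yields
\[
  \int_0^1\Phi(t)\,dt\;\asymp\;\frac{1}{r^{d-\alpha}}\left(1\wedge\frac{a}{r}\right)^{\alpha/2}\left(1\wedge\frac{b}{r}\right)^{\alpha/2},
\]
and since $r\le1$ this dominates the tail $\asymp(1\wedge a)^{\alpha/2}(1\wedge b)^{\alpha/2}$ (because $r^{-(d-\alpha)}\ge1$ and $1\wedge\frac{a}{r}\ge1\wedge a$). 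This last case split is the only laborious part: there are several subcases according to the ordering of $a^\alpha$, $b^\alpha$, $r^\alpha$ and $1$, but each reduces to an elementary power integral and $d>\alpha$ guarantees they all have the claimed order, so I expect it to be the main — though routine — obstacle (this is the standard type of computation behind Green function estimates for $C^{1,1}$ open sets; cf.\ Corollary~\ref{C:half-space-like}).

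Collecting the two cases, $G_D(x,y)\asymp(1\wedge a)^{\alpha/2}(1\wedge b)^{\alpha/2}\,r^{\alpha-d}$ when $r\ge1$ and $G_D(x,y)\asymp r^{\alpha-d}\bigl(1\wedge\frac{a}{r}\bigr)^{\alpha/2}\bigl(1\wedge\frac{b}{r}\bigr)^{\alpha/2}$ when $r\le1$. Finally I would recognize both as the single displayed formula: setting $m:=1\wedge r^{\alpha/2}$, we have $m=1$ and $(1\wedge a)^{\alpha/2}=1\wedge a^{\alpha/2}=1\wedge(a^{\alpha/2}/m)$ when $r\ge1$, and $m=r^{\alpha/2}$ and $\bigl(1\wedge\frac{a}{r}\bigr)^{\alpha/2}=1\wedge(a^{\alpha/2}/m)$ when $r\le1$ (likewise for $b$), together with $r^{\alpha-d}=|x-y|^{-(d-\alpha)}$. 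In either case this gives
\[
  G_D(x,y)\;\asymp\;\frac{1}{|x-y|^{d-\alpha}}
  \left(1\wedge\frac{\delta_D(x)^{\alpha/2}}{1\wedge|x-y|^{\alpha/2}}\right)
  \left(1\wedge\frac{\delta_D(y)^{\alpha/2}}{1\wedge|x-y|^{\alpha/2}}\right),
\]
as claimed.
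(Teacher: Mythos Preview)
Your approach is correct and follows the same overall strategy as the paper's proof --- integrate the heat kernel bound of Theorem~\ref{T:exterior} over $t\in(0,\infty)$ and split the integral at $t=1$ --- but organizes the subsequent casework differently. You split according to whether $|x-y|\lessgtr 1$: for $|x-y|\ge 1$ both pieces are easy and give $(1\wedge a)^{\alpha/2}(1\wedge b)^{\alpha/2}r^{\alpha-d}$, while for $|x-y|\le 1$ the small-time piece is exactly the standard CKS power-integral computation (the one behind \eqref{E:Green-small-time}). The paper instead splits according to the sizes of $\delta_D(x)$ and $\delta_D(y)$ relative to fixed constants; in the case where both are small it exploits the exterior-domain geometry (since $D^c$ is compact, both points then lie in a fixed ball and $|x-y|$ is bounded) to quote \cite{CKS} directly, and handles the mixed case $\delta_D(x)\le 1<r<\delta_D(y)$ by a separate explicit computation. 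Your decomposition is arguably better matched to the form of the target estimate (which involves $1\wedge|x-y|^{\alpha/2}$) and does not need the geometric observation about bounded $|x-y|$, at the price of a few more subcases when $|x-y|\le 1$; the paper trades that bookkeeping for its Case~3 computation. The analytic content --- splitting at $t=1$ and reducing to elementary power integrals using $d>\alpha$ --- is the same in both.
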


The proofs of Theorems \ref{T:half-space-like} and \ref{T:exterior}
will be given in  Sections \ref{sec:half-space-like} and \ref{sec:exterior},
respectively.  Both proofs rely on the idea that for $x$ and $y$ in
$D$ far from the boundary and $t$ sufficiently large, we can compare
$p_D(t,x,y)$ to the heat kernel in a well understood open set: either a
half-space or $\mathbb R^d$.  Once this is established, we extend
our estimate to the boundary of $D$ using the following corollary to
Theorem \ref{T:1.1}.  The result will allow us to ``push'' points in
$D$ a fixed distance away from $\partial D$.

\begin{lemma}\label{L:ratio}
    Let $D$ be a $C^{1,1}$ open set, and let $\lambda >0$, $t_0 >0$ be fixed.  Suppose $x,x_0 \in D$ satisfy $|x-x_0| = \lambda t_0^{1/\alpha}$.  Then
    \begin{align}
        \frac{p_D(t_0,x,z)}{p_D(t_0,x_0,z)} \asymp
        \frac{ 1\wedge \delta_D(x)^{\alpha/2}}{1\wedge
        \delta_D(x_0)^{\alpha/2}},
        \label{E:ratio}
    \end{align}
    where the (implicit) comparison constants
     in  \eqref{E:ratio}  depend only on $d$, $\alpha$, $\lambda$, $t_0$
     and the $C^{1,1}$ characteristics of $D$.
\end{lemma}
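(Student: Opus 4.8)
The plan is to reduce the two-sided comparison in \eqref{E:ratio} to the small-time sharp estimate in Theorem \ref{T:1.1}(i), applied at the fixed time $t_0$. Since $t_0$ is fixed, Theorem \ref{T:1.1}(i) gives, for all $z\in D$,
\begin{align*}
    p_D(t_0,x,z) &\asymp \left( 1\wedge \frac{\delta_D(x)}{t_0^{1/\alpha}} \right)^{\alpha/2} \left( 1\wedge \frac{\delta_D(z)}{t_0^{1/\alpha}} \right)^{\alpha/2} \left( t_0^{-d/\alpha}\wedge \frac{t_0}{|x-z|^{d+\alpha}} \right),
\end{align*}
and similarly for $p_D(t_0,x_0,z)$, where the comparison constants depend only on $d,\alpha,t_0$ and the $C^{1,1}$ characteristics of $D$ (the $C^{1,1}$ characteristics enter because $T=t_0$ is fixed). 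Taking the ratio, the boundary factors involving $z$ cancel, and since $t_0$ is fixed we have $\big(1\wedge \delta_D(x)/t_0^{1/\alpha}\big)^{\alpha/2} \asymp 1\wedge \delta_D(x)^{\alpha/2}$ with constants depending on $t_0$, and likewise for $x_0$. So the only thing left to control is the ratio of the polynomial-in-distance factors,
\begin{align*}
    \frac{t_0^{-d/\alpha}\wedge \dfrac{t_0}{|x-z|^{d+\alpha}}}{t_0^{-d/\alpha}\wedge \dfrac{t_0}{|x_0-z|^{d+\alpha}}},
\end{align*}
which I claim is bounded above and below by constants depending only on $d,\alpha,\lambda,t_0$.

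The key step is this last claim, and it rests entirely on the hypothesis $|x-x_0| = \lambda t_0^{1/\alpha}$. Set $r := t_0^{1/\alpha}$ and $f(s) := r^{-d}\wedge \big(t_0 s^{-(d+\alpha)}\big)$ for $s>0$, which is a decreasing function of $s$. We have $|x-z|$ and $|x_0-z|$ differing by at most $\lambda r$, i.e. $\big||x-z|-|x_0-z|\big|\le \lambda r$ by the triangle inequality. A short case analysis shows $f(s)/f(s') \asymp 1$ whenever $|s-s'|\le \lambda r$: when both $s,s'\lesssim r$ the function $f$ equals the constant $r^{-d}$ and the ratio is $1$; when both are $\gtrsim r$, we compare $s^{-(d+\alpha)}$ to $(s')^{-(d+\alpha)}$, and since $s\le s'+\lambda r \le s'(1+\lambda)$ (using $s'\gtrsim r$) the ratio is at most $(1+\lambda)^{d+\alpha}$; the mixed case interpolates between these. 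Hence $\big(t_0^{-d/\alpha}\wedge t_0|x-z|^{-(d+\alpha)}\big) \asymp \big(t_0^{-d/\alpha}\wedge t_0|x_0-z|^{-(d+\alpha)}\big)$ uniformly in $z$, with constants depending only on $d,\alpha,\lambda$. Combining with the previous paragraph yields \eqref{E:ratio}.

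The main obstacle is not conceptual but bookkeeping: one must be careful that the comparison constants coming from Theorem \ref{T:1.1}(i) genuinely depend only on the $C^{1,1}$ characteristics $(R_0,\Lambda_0)$, $d$, $\alpha$, and the fixed $T=t_0$ — this is exactly the content of that theorem, so it is available — and then track that the elementary inequality $\big(1\wedge \delta/t_0^{1/\alpha}\big)^{\alpha/2}\asymp 1\wedge \delta^{\alpha/2}$ and the ratio estimate for $f$ introduce only constants depending on $t_0$ and $\lambda$. A minor subtlety is the degenerate situation where $\delta_D(x)$ or $\delta_D(x_0)$ is large (say $\geq 1$), in which case $1\wedge \delta_D(x)^{\alpha/2}=1$ and one checks the corresponding factor from Theorem \ref{T:1.1}(i) is comparable to $1$ as well (since $\delta_D(x)\ge 1$ and $t_0$ is fixed, $1\wedge \delta_D(x)/t_0^{1/\alpha}$ is comparable to $\min\{1, t_0^{-1/\alpha}\}$, a positive constant). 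All of these are routine once the structure above is in place.
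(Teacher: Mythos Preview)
Your proof is correct and follows essentially the same approach as the paper: apply Theorem~\ref{T:1.1}(i) at the fixed time $t_0$, take the ratio so the $\delta_D(z)$ factors cancel, and then bound the remaining ratio of spatial factors $\big(t_0^{-d/\alpha}\wedge t_0|x-z|^{-(d+\alpha)}\big)\big/\big(t_0^{-d/\alpha}\wedge t_0|x_0-z|^{-(d+\alpha)}\big)$ by a short case analysis using $|x-x_0|=\lambda t_0^{1/\alpha}$. The paper organizes that last case analysis via the three regions $z\in B(x_0,\tfrac{\lambda}{4}t_0^{1/\alpha})$, $z\in B(x,\tfrac{\lambda}{4}t_0^{1/\alpha})$, and the complement, whereas you phrase it in terms of the decreasing function $f(s)$ and the triangle inequality $\big||x-z|-|x_0-z|\big|\le\lambda t_0^{1/\alpha}$; these are equivalent.
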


\begin{proof}
    By symmetry, it suffices to prove an upper bound.  By Theorem \ref{T:1.1} there exists a $c_1>0$ such that
    \begin{align*}
        p_D(t_0,x,z) \le c_1
        \left( 1\wedge \frac{\delta_D(x) }{t_0^{1/\alpha}} \right)^{\alpha/2}
        \left( 1\wedge \frac{\delta_D(z)}{t_0^{1/\alpha}} \right)^{\alpha/2}
        t_0^{-d/\alpha}
        \left( 1\wedge \frac{t_0^{1/\alpha}}{|x-z|} \right)^{d+\alpha}
    \end{align*}
    and
    \begin{align*}
        p_D(t_0,x_0,z) \ge c_1^{-1}
        \left( 1\wedge \frac{\delta_D(x_0) }{t_0^{1/\alpha}} \right)^{\alpha/2}
        \left( 1\wedge \frac{\delta_D(z)}{t_0^{1/\alpha}} \right)^{\alpha/2}
        t_0^{-d/\alpha}
        \left( 1\wedge \frac{t_0^{1/\alpha}}{|x_0-z|} \right)^{d+\alpha}.
    \end{align*}
    Thus
    \begin{align*}
        \frac{p_D(t_0,x,z)}{p_D(t_0,x_0,z)} \le c_1^2 \,
        \frac{  1\wedge \delta_D(x)^{\alpha /2}}{  1\wedge \delta_D(x_0)^{\alpha/2}}
        \left( \frac{1\wedge \frac{t_0^{1/\alpha}}{|x-z|}}{1\wedge \frac{t_0^{1/\alpha}}{|x_0-z|}} \right)^{d+\alpha}
    \end{align*}
    To complete the proof, we must show that the last term on the right-hand side is bounded by a constant.  Define $\lambda_0 := \lambda/4$.  Then for $z \in B(x_0,\lambda_0 t_0^{1/\alpha})$ we have $|x-z| \asymp t_0^{1/\alpha}$, and similarly, for $z \in B(x,\lambda_0 t_0^{1/\alpha})$ we have $|x_0-z| \asymp t_0^{1/\alpha}$.  Finally, for $z \not\in B(x,\lambda_0 t_0^{1/\alpha})\cup B(x_0,\lambda_0 t_0^{1/\alpha})$ we have $|x-z| \asymp |x_0-z|$.
\end{proof}

In recent years, progress has been made proving these kinds of sharp
two-sided heat kernel estimates for Dirichlet Laplacians, that is,
for the transition densities of killed Brownian motions.  The first
such estimate appears in \cite{Z1},  wherein Zhang proves the
analogy of Theorem \ref{T:1.1}.  In \cite{Z2}, Zhang proves a global
estimate in the case where $D$ is an exterior $C^{1,1}$ connected
open set; his proof uses the interior estimates established by
Grigor\'yan and Saloff-Coste in \cite{GSC}.  Then, in \cite{S}, Song
proves a global estimate in the case where $D$ is given by the
region above a bounded $C^{1,1}$ function.
See \cite{GyS} for  recent development on heat kernel estimates for
Dirichlet Laplacian in inner uniform domains.

 The estimates in Theorems \ref{T:half-space-like} and
\ref{T:exterior} are the analogs in the symmetric $\alpha$-stable
process case of the sharp heat kernel estimates given in \cite{Z2}
and \cite{S}.  Our technique, based on comparison,  differs from the
one used in those papers, but it can be adapted to give new proofs
in the Brownian motion case.  Moreover, the proofs in $\cite{Z1}$,
\cite{Z2}, and \cite{S} rely on a sophisticated backward parabolic
Harnack inequality (the local comparison theorem of \cite{FGS}), a
result whose analogy for stable processes and non-local operators is
not known and seems difficult to establish.  Our technique,
therefore, is more elementary and, we believe, more perspicuous.

The approach and ideas developed in this paper can be used
to study heat kernel estimates for other types of discontinuous
processes in unbounded open sets.
 For instance, in
Section \ref{sec:c-exterior},
 we will establish
an estimate for the heat kernel of the censored $\alpha$-stable
process in an exterior domain.  We plan to address the sharp heat
kernel estimates for mixed stable processes in unbounded
 open sets
in a separate paper.

For an open subset $D$ of $\R^d$, a censored $\alpha$-stable process $Y$ in $D$ is a strong Markov process whose infinitesimal generator is given by
\begin{align*}
    \mathcal L_Du(x) := c\,
    \lim_{\varepsilon\downarrow 0}\int_{\{y\in D: |y-x|>\varepsilon\}}
    (u(y) - u(x))\frac{dy}{|x-y|^{d+\alpha}},
\end{align*}
for a constant $c = c(d,\alpha)$.
In some literature, $\mathcal L_D$ is called the regional fractional
Laplacian.
The operator $\mathcal L_D$ differs from the fractional Laplacian in that the integration in the definition is taken over $y$ in $D$ only.  We will write $q_D(t,x,y)$ for the heat kernel of $Y$.  Such processes were studied extensively in \cite{BBC} where it is shown, for example, that when $D$ is a Lipschitz domain, then the process $Y$ coincides with the reflected $\alpha$-stable process $\bar{Y}$ in $D$ if and only if $\alpha \leq 1$.
Two-sided heat kernel estimates for $\bar Y$ have been studied in \cite{CK}.

For $\alpha \in (1,2)$,   $Y$ is the subprocess of $\bar{Y}$ killed upon exiting $D$, and if $D$ is a bounded Lipschitz open set then $Y$ exits $D$ continuously and in finite time (see \cite[Theorem 1.1]{BBC}).  That is, if we let $\zeta := \inf\left\{ t>0: Y_t \not\in D \right\}$ denote the lifetime of $Y$, then $\Pr_x(\zeta < \infty)=1$ for all $x \in D$ and $Y_{\zeta-} \in \partial D$.  In \cite{CKS2}, the following analog to Theorem \ref{T:1.1} was given for censored $\alpha$-stable processes.

\begin{thm}[Theorem 1.1 of \cite{CKS2}]\label{T:c-1.1}
 Let $D$ be a $C^{1,1}$ open
subset of $\R^d$ with $d\geq 1$ and $\delta_D(x)$ the Euclidean
distance between $x$ and $D^c$.  Suppose $\alpha \in (1,2)$.
\begin{description}
\item{\rm (i)} For every $T>0$, on $(0, T]\times D\times D$,
$$
q_D(t, x, y)\,\asymp\,
\left( 1\wedge \frac{\delta_D(x)}{t^{1/\alpha}}\right)^{\alpha-1}
\left( 1\wedge \frac{\delta_D(y)}{t^{1/\alpha}}\right)^{\alpha-1}
\left( t^{-d/\alpha} \wedge \frac{t}{|x-y|^{d+\alpha}}\right) .
$$
\item{\rm (ii)} Suppose in addition that $D$ is bounded.
For every $T>0$, there are positive constants $c_1<c_2$ so that on
$[T, \infty)\times D\times D$,
$$
c_1\, e^{-\lambda_1 t}\, \delta_D (x)^{\alpha-1}\, \delta_D
(y)^{\alpha-1} \,\leq\,  q_D(t, x, y) \,\leq\, c_2\, e^{-\lambda_1
t}\, \delta_D (x)^{\alpha-1} \,\delta_D (y)^{\alpha-1} ,
$$
where $\lambda_1>0$ is the smallest eigenvalue of $-\mathcal L_D$.
\end{description}

\end{thm}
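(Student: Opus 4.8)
The plan is to establish Theorem~\ref{T:c-1.1}, the censored analogue of Theorem~\ref{T:1.1}, by following the strategy of \cite{CKS} and adapting it to the censored process $Y$. Two structural facts drive the argument. First, since $D$ is $C^{1,1}$ (hence Lipschitz) and $\alpha\in(1,2)$, $Y$ approaches $\partial D$ continuously in finite time, and its potential theory (developed from \cite{BBC}) yields a scale-invariant parabolic Harnack inequality and a boundary Harnack principle for $Y$. Second, the correct boundary profile is $\delta_D(\cdot)^{\alpha-1}$ rather than $\delta_D(\cdot)^{\alpha/2}$: in the half-space $\R^d_+$ the function $x\mapsto x_d^{\alpha-1}$ is $\mathcal L_{\R^d_+}$-harmonic, and the $C^{1,1}$ condition lets one transplant this profile to $D$ by comparing, in each boundary chart, $D$ with its tangent half-space. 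The exponent $\alpha-1$ is positive exactly because $\alpha>1$, which is why the statement is restricted to $\alpha\in(1,2)$.

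For part~(i) I would argue in three stages. (1) \emph{Interior estimate}: when $\delta_D(x)\wedge\delta_D(y)\ge\kappa\,t^{1/\alpha}$ and $t\le T$, then $q_D(t,x,y)\asymp t^{-d/\alpha}\wedge t|x-y|^{-d-\alpha}$; the lower bound follows from $q_D\ge p_D$ (since $\mathcal L_D u\ge -(-\Delta)^{\alpha/2}|_D u$ for $0\le u$ supported in $\bar D$, so the censored semigroup dominates the killed one) together with Theorem~\ref{T:1.1}(i), while the upper bound follows by localizing to an interior ball, where the part process of $Y$ differs from the killed stable process only through a bounded multiplicative (Feynman--Kac) factor, and by controlling exits from the ball with the L\'evy system of $Y$ and the parabolic Harnack inequality. (2) \emph{Survival probability}: $\Pr_x(\zeta>t)\asymp 1\wedge(\delta_D(x)/t^{1/\alpha})^{\alpha-1}$ for $t\le T$, proved by sandwiching, in each boundary chart, between barriers built from $x_d^{\alpha-1}$ (so that the error $\mathcal L_D(\delta_D^{\alpha-1})$ is of strictly lower order near $\partial D$) and rescaling. (3) \emph{Globalization}: iterating the semigroup identity $q_D(t,x,y)=\int_D q_D(t/2,x,z)\,q_D(t/2,z,y)\,dz$ and invoking the boundary Harnack principle, as in \cite{CKS}, to obtain the approximate factorization
\begin{align*}
    q_D(t,x,y)\asymp \Pr_x(\zeta>t)\,\Pr_y(\zeta>t)\left(t^{-d/\alpha}\wedge\frac{t}{|x-y|^{d+\alpha}}\right),
\end{align*}
which combined with stage (2) is the asserted estimate; for the matching lower bound one restricts the $z$-integration to points at distance $\asymp t^{1/\alpha}$ from $\partial D$ and at distance $\asymp|x-y|\vee t^{1/\alpha}$ from both $x$ and $y$, where every factor is bounded below directly.

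For part~(ii), when $D$ is bounded the operator $-\mathcal L_D$ has discrete spectrum $0<\lambda_1<\lambda_2\le\cdots$ with $L^2(D)$-orthonormal eigenfunctions $\phi_n$, so $q_D(t,x,y)=\sum_{n\ge1}e^{-\lambda_n t}\phi_n(x)\phi_n(y)$. Part~(i) together with the boundedness of $D$ gives that the censored semigroup is intrinsically ultracontractive; the ground-state estimate $\phi_1(x)\asymp\delta_D(x)^{\alpha-1}$ then follows from $\phi_1(x)=e^{\lambda_1}\int_D q_D(1,x,z)\,\phi_1(z)\,dz$ and part~(i), and intrinsic ultracontractivity yields $|q_D(t,x,y)-e^{-\lambda_1 t}\phi_1(x)\phi_1(y)|\le c\,e^{-\lambda_2 t}\phi_1(x)\phi_1(y)$ for $t\ge1$. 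Hence for $t\ge T$ one gets $q_D(t,x,y)\asymp e^{-\lambda_1 t}\phi_1(x)\phi_1(y)\asymp e^{-\lambda_1 t}\delta_D(x)^{\alpha-1}\delta_D(y)^{\alpha-1}$.

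The principal obstacle lies in stages (2) and (3) of part~(i): the sharp survival-probability estimate with exponent $\alpha-1$ and the boundary Harnack principle for $Y$. This is exactly where $C^{1,1}$ regularity is indispensable, since one needs quantitative control of $\mathcal L_D(\delta_D^{\alpha-1})$ near $\partial D$, that is, of how $D$ deviates from its tangent half-spaces; the parabolic Harnack inequality and intrinsic ultracontractivity for censored stable processes, although expected, likewise rest on the regularity theory originating in \cite{BBC}.
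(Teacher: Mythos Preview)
This statement is not proved in the present paper: it is quoted verbatim as Theorem~1.1 of \cite{CKS2} and used as a black box (for instance in the proof of Lemma~\ref{L:c-ratio} and, via \eqref{E:c-ratio-exterior}, in the derivation of Theorem~\ref{T:c-exterior}). There is therefore no proof here to compare your proposal against.

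That said, your outline is a plausible high-level description of the strategy carried out in \cite{CKS2}: interior estimates by comparison with the killed stable process, a sharp survival-probability bound $\Pr_x(\zeta>t)\asymp (1\wedge \delta_D(x)/t^{1/\alpha})^{\alpha-1}$ coming ultimately from the fact that $x_d^{\alpha-1}$ is $\mathcal L_{\R^d_+}$-harmonic, a boundary factorization via the parabolic/boundary Harnack machinery, and part~(ii) via intrinsic ultracontractivity and the ground-state estimate $\phi_1\asymp\delta_D^{\alpha-1}$. One simplification worth noting: for the interior \emph{upper} bound in your stage~(1) you do not need the Feynman--Kac localization argument at all. The global inequality $q_D\le \bar q_D$ together with the reflected estimate \eqref{E:reflected-bound} (derived in this paper from \cite{CK}) already gives $q_D(t,x,y)\le c\,(t^{-d/\alpha}\wedge t|x-y|^{-d-\alpha})$ everywhere, which is exactly the interior upper bound. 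Your identification of the main difficulty---the sharp boundary decay rate $\alpha-1$ and the boundary Harnack principle for $Y$, both requiring the $C^{1,1}$ hypothesis---is accurate.
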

A main difficulty in studying $\mathcal L_D$ versus
$\Delta^{\alpha/2}\vert_D$  is that we no longer have domain
monotonicity.  That is, for $D_1 \subset D_2$, it does not necessarily
follow that $q_{D_1}(t,x,y) \le q_{D_2}(t,x,y)$.  For this reason,
our proof of Theorem \ref{T:half-space-like} does not extend
naturally to censored stable processes.
However, we are able to adapt our approach to Theorem \ref{T:exterior}
to obtain the following sharp heat kernel estimates for censored stable processes
in exterior
 open sets;
 the proof is outlined in section \ref{sec:c-exterior}.

\begin{thm}\label{T:c-exterior}
    Suppose $\alpha \in (1,2)$, $D$ is an exterior open $C^{1,1}$ set in $\R^d$, and $d \geq 2$.  Then
    \begin{align}
        q_D(t,x,y) \asymp
        \left( 1\wedge \frac{\delta_D(x)}{1\wedge t^{1/\alpha}} \right)^{\alpha-1}
        \left( 1\wedge \frac{\delta_D(y)}{1\wedge t^{1/\alpha}} \right)^{\alpha-1}
        \left( t^{-d/\alpha}\wedge \frac{t}{|x-y|^{d+\alpha}} \right).
        \label{E:c-exterior}
    \end{align}
\end{thm}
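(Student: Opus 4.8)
The plan is to follow the proof of Theorem~\ref{T:exterior} almost verbatim, with one structural change: every appeal there to domain monotonicity, which is no longer available for $q_D$, is replaced by a comparison with the reflected $\alpha$-stable process $\bar Y$ on $\bar D$. Throughout we use that $\alpha\in(1,2)$ and $d\ge 2$ force $d>\alpha$, so that Theorem~\ref{T:exterior} applies. The small-time part is immediate: for any fixed $T\ge 1$ one has, on $(0,T]\times D\times D$, that $1\wedge t^{1/\alpha}=t^{1/\alpha}$ for $t\le 1$ and $1\wedge\frac{\delta_D(x)}{1\wedge t^{1/\alpha}}=1\wedge\delta_D(x)\asymp 1\wedge\frac{\delta_D(x)}{t^{1/\alpha}}$ for $t\in[1,T]$ (constants depending on $T$), so \eqref{E:c-exterior} on $(0,T]\times D\times D$ is just Theorem~\ref{T:c-1.1}(i). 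It remains to treat $t\ge T$ for a conveniently large fixed $T$.

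Next I would record the censored analog of Lemma~\ref{L:ratio}: its derivation from Theorem~\ref{T:1.1}(i) transcribes word for word with Theorem~\ref{T:c-1.1}(i) in place of Theorem~\ref{T:1.1}(i) and exponent $\alpha-1$ in place of $\alpha/2$, giving $q_D(t_0,x,z)/q_D(t_0,x_0,z)\asymp (1\wedge\delta_D(x)^{\alpha-1})/(1\wedge\delta_D(x_0)^{\alpha-1})$ for fixed $\lambda,t_0>0$, all $x,x_0\in D$ with $|x-x_0|=\lambda t_0^{1/\alpha}$, and all $z\in D$. Inserting this into the Chapman--Kolmogorov identity $q_D(t,x,y)=\int_D q_D(t_0,x,z)\,q_D(t-t_0,z,y)\,dz$, applied once in the $x$-variable and once in the $y$-variable, reduces \eqref{E:c-exterior} on $[T,\infty)\times D\times D$ to the ``interior'' case $\delta_D(x)\wedge\delta_D(y)\ge 1$; moving a base point by $O(t_0^{1/\alpha})$ is harmless since, once $T\ge t_0$, the factor $t^{-d/\alpha}\wedge\frac{t}{|x-y|^{d+\alpha}}$ equals $t^{-d/\alpha}$ whenever $|x-y|\lesssim t_0^{1/\alpha}$. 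In the interior regime the boundary factors are $\asymp 1$, so the target reduces to $q_D(t,x,y)\asymp t^{-d/\alpha}\wedge\frac{t}{|x-y|^{d+\alpha}}$.

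For the interior upper bound I would use that, for $\alpha\in(1,2)$, the censored process is the subprocess of $\bar Y$ killed upon exiting $D$, so its transition density $\bar q_D$ satisfies $q_D\le\bar q_D$; since $D$ is an exterior $C^{1,1}$ open set, $\bar D$ is an Ahlfors $d$-regular set with volume doubling and jump kernel $\asymp|x-y|^{-d-\alpha}$, so the estimates of \cite{CK} give $\bar q_D(t,x,y)\asymp t^{-d/\alpha}\wedge\frac{t}{|x-y|^{d+\alpha}}$ for all $t>0$ and $x,y\in D$ --- and this (valid with no boundary decay and for all $x,y$) is exactly what replaces the inequality $p_D\le p$ used in Section~\ref{sec:exterior}. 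For the interior lower bound I would use $q_D\ge p_D$ (the censored process arises from the killed process $X^D$ through the nonnegative creation rate $\kappa_D(x)=c\int_{D^c}|x-y|^{-d-\alpha}\,dy$; equivalently, the censored Dirichlet form is dominated by that of $X^D$, see \cite{BBC}), so that by the already-proved Theorem~\ref{T:exterior}, for $\delta_D(x)\wedge\delta_D(y)\ge 1$ and $t\ge T$ one gets $q_D(t,x,y)\ge p_D(t,x,y)\ge c\,(t^{-d/\alpha}\wedge\frac{t}{|x-y|^{d+\alpha}})$. Alternatively this lower bound can be reproved directly, as in Section~\ref{sec:exterior}, from the transience of $X$ for $d>\alpha$ (from a point at distance $\gtrsim 1$ from the compact set $D^c$ the process escapes to infinity without meeting $D^c$ with probability bounded below) together with the parabolic Harnack inequality. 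Combining the small-time estimate, the reduction of the previous paragraph, and the interior two-sided bounds then yields \eqref{E:c-exterior} on $\R_+\times D\times D$.

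The main obstacle is precisely the loss of domain monotonicity flagged just before the statement: it forces the interior upper bound to pass through $\bar q_D$ rather than through the whole-space kernel $p$, so the one genuinely new ingredient --- everything else being a faithful translation of Section~\ref{sec:exterior} --- is the global two-sided estimate for the \emph{reflected} stable process on an exterior domain, i.e.\ the fact that reflection off a compact $C^{1,1}$ boundary does not change the global on- and off-diagonal exponents and $\bar q_D\asymp t^{-d/\alpha}\wedge\frac{t}{|x-y|^{d+\alpha}}$ as used above.
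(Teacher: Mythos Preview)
Your proposal is correct and follows the paper's own route closely: both use the censored analog of Lemma~\ref{L:ratio} (the paper's Lemma~\ref{L:c-ratio}) together with Chapman--Kolmogorov to reduce to the interior, and both handle the interior upper bound via $q_D\le\bar q_D$ and the global estimate~\eqref{E:reflected-bound} for the reflected process.

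The one genuine difference is in the interior lower bound. The paper re-establishes Proposition~\ref{P:c-exterior-compare} by proving censored analogs of the lemmas in Section~\ref{sec:exterior}: a hitting-probability estimate Lemma~\ref{L:c-C0} (via the Green function of $\bar Y$), then carrying over Lemma~\ref{L:exterior-lb-large-distance} through a Dynkin-type formula for $Y$ as a subprocess of $\bar Y$, while transferring Lemmas~\ref{L:epsilon} and~\ref{L:exterior-lb-large-time} via $q_D\ge p_D$. Your primary route is more economical: once Theorem~\ref{T:exterior} is in hand, the single inequality $q_D\ge p_D$ from~\eqref{e:4.4} immediately gives $q_D(t,x,y)\ge p_D(t,x,y)\ge c\,(t^{-d/\alpha}\wedge t|x-y|^{-d-\alpha})$ on the interior, bypassing the need for Lemma~\ref{L:c-C0} and the censored Dynkin argument altogether. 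The paper's approach is more self-contained (it would survive even if Theorem~\ref{T:exterior} were not proved first), while yours exploits the logical ordering of the paper to shorten the argument. Your remark that ``moving a base point by $O(t_0^{1/\alpha})$ is harmless'' is slightly imprecise --- the real reason the kernel factor is preserved is that the pushing vector $v$ in~\eqref{E:outward-push} is chosen the same for $x$ and $y$, so $|x_0-y_0|=|x-y|$ exactly --- but this is the paper's construction and you are correctly invoking it.
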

Once again, we can integrate the heat kernel estimate to get a Green function estimate.
\begin{corollary}\label{C:c-exterior}
    Suppose $\alpha \in (1,2)$ and $D$ is an exterior open $C^{1,1}$ set in $\R^d$ with $d \geq 2 > \alpha$.
    \begin{align*}
        G_D(x,y) \asymp \frac{1}{|x-y|^{d-\alpha}}
        \left( 1\wedge \frac{\delta_D(x)^{\alpha-1}}{1\wedge |x-y|^{\alpha/2}} \right)
        \left( 1\wedge \frac{\delta_D(y)^{\alpha-1}}{1\wedge |x-y|^{\alpha/2}} \right).
    \end{align*}
\end{corollary}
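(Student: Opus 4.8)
To prove the corollary I would integrate the heat kernel estimate of Theorem~\ref{T:c-exterior} over $t$. Writing $G_D(x,y)=\int_0^\infty q_D(t,x,y)\,dt$ and inserting the two-sided bound, the statement reduces to estimating the elementary integral
\[
\int_0^\infty \Bigl(1\wedge \frac{\delta_D(x)}{1\wedge t^{1/\alpha}}\Bigr)^{\alpha-1}\Bigl(1\wedge \frac{\delta_D(y)}{1\wedge t^{1/\alpha}}\Bigr)^{\alpha-1}\Bigl(t^{-d/\alpha}\wedge \frac{t}{|x-y|^{d+\alpha}}\Bigr)\,dt.
\]
Since $1\wedge \frac{\delta_D(x)}{1\wedge t^{1/\alpha}}=1\wedge \frac{\delta_D(x)\wedge 1}{1\wedge t^{1/\alpha}}$, there is no loss in assuming $\delta_D(x),\delta_D(y)\le 1$; I would set $r=|x-y|$, $a=\delta_D(x)\wedge 1$, $b=\delta_D(y)\wedge 1$, and, using the symmetry $x\leftrightarrow y$, assume $a\le b$.

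The plan is then to break $(0,\infty)$ at the scales where the integrand changes its power-law behaviour: $t\asymp a^\alpha$ and $t\asymp b^\alpha$ (where the two boundary factors saturate), $t\asymp r^\alpha$ (where $t^{-d/\alpha}\wedge t\,r^{-(d+\alpha)}$ switches branches), and $t\asymp 1$ (where $1\wedge t^{1/\alpha}$ saturates). On each of the resulting finitely many subintervals the integrand is a constant in $r,a,b$ times an explicit power $t^\gamma$, so every subintegral is computed in one line. The hypothesis $d\ge 2>\alpha$ is used exactly here: it forces the exponent of $t$ to be strictly less than $-1$ on every unbounded piece and on the pieces adjacent to $t\asymp r^\alpha$, so those integrals converge and are comparable to their lower endpoints --- this is the same role $d>\alpha$ plays in Corollary~\ref{C:exterior}.

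For the upper bound one sums all the pieces; for the lower bound it suffices to retain one dominant subinterval in each case. When $r\ge 1$ the boundary factors are $\asymp a^{\alpha-1}b^{\alpha-1}$ on $\{t\gtrsim 1\}$, so $G_D(x,y)\asymp a^{\alpha-1}b^{\alpha-1}\int_0^\infty\bigl(t^{-d/\alpha}\wedge t\,r^{-(d+\alpha)}\bigr)\,dt\asymp a^{\alpha-1}b^{\alpha-1}\,r^{\alpha-d}$, which is the asserted form in that range (the $t<1$ contribution being lower order). When $r<1$ the dominant contributions come from the intervals cut out by $a^\alpha$, $b^\alpha$, $r^\alpha$ lying below $t\asymp 1$; organizing the computation according to how $a^\alpha\le b^\alpha$ interleave with $r^\alpha$ and reassembling the resulting powers of $a$, $b$, $r$ yields the product of the two boundary factors in the statement times $r^{\alpha-d}$.

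I expect the only genuine difficulty to be organizational: one must enumerate the orderings of $a^\alpha\le b^\alpha$, $r^\alpha$ and $1$, and in each case check that the minimum of the two boundary factors with the heat-kernel factor reassembles into \emph{exactly} the product appearing on the right-hand side, not merely into something comparable with a different power of $|x-y|$. This is a finite, if tedious, bookkeeping exercise; once the dominant interval in each case is identified, the bounds are immediate, and the identical scheme with the exponent $\alpha-1$ replaced by $\alpha/2$ reproduces Corollary~\ref{C:exterior}.
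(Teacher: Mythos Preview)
Your proposal is correct and follows essentially the same approach as the paper. The paper's proof of this corollary is a single sentence pointing to the proof of Corollary~\ref{C:exterior}, which likewise integrates the heat kernel estimate in $t$; the only cosmetic difference is that the paper organizes the case analysis by first fixing spatial regimes for $\delta_D(x),\delta_D(y)$ (both small, both large, mixed) and then splitting the $t$-integral, whereas you enumerate the orderings of the time scales $a^\alpha,b^\alpha,r^\alpha,1$ directly---these are equivalent bookkeepings of the same elementary computation.
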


\medskip

In Theorems \ref{T:exterior} and \ref{T:c-exterior},
it is assumed that $d>\alpha$.
It is natural to ask about global heat kernel
estimates on $p_D(t, x, y)$ and $q_D(t, x, y)$
for exterior open sets in $\R$ with $\alpha
\geq 1$. Obtaining these estimates will new require insights; we plan to address the problem elsewhere.

When we had nearly finished this paper, we saw a preprint \cite{BGR}
by Bogdan, Grzywny, and Ryznar at arXiv.  In that paper the authors
independently obtained the heat kernel estimates of Theorem
\ref{T:exterior} for symmetric stable processes in exterior
$C^{1,1}$ open sets by a different approach.

Throughout this paper, we use $c$, $c_1$, $c_2$, \dots to denote
generic constants,  whose exact values are not important and can
change from one appearance to another.  The labeling of the
constants $c_1$, $c_2$, \dots starts anew in each proof.  The values
of the constants $C_0$, $C_1$, \dots ,  will remain the same
throughout this paper.  $B(x,r)$ denotes the Euclidean ball centered at $x$ with radius $r$.  For a Borel set $E$, we write $\tau_E := \inf\left\{ t>0: X_t\not\in E \right\}$ for the exit time from $E$, $T_E := \inf\left\{ t>0: X_t \in E \right\}$ for the hitting time of $E$, and $|E|$ for the Lebesgue measure of $E$.

\section{Heat kernel estimates on half-space-like open sets}
\label{sec:half-space-like}

In this section, we give a proof of Theorem \ref{T:half-space-like}.
Recall that we define a half-space to be any isometry of the upper
half-space $\left\{ (x_1,\dots,x_d) \in \mathbb R^d: x_d > 0
\right\}$, and we say that $D$ is a half-space-like open set if,
after isometry, $H_a\subset D\subset H_b$ for some $a>b$,
 where $H_a:=\{(x_1, \cdots, x_d)\in R^d: x_d >a\}$.
An elementary but key observation is that for a half-space-like open
set
\begin{align}\label{E:half-space-compare}
    p_{H_a}(t,x,y) \le p_D(t,x,y) \le p_{H_b}(t,x,y).
\end{align}
Moreover, it's easy to see that the desired estimate \eqref{E:half-space-like} holds in the case of a true half-space.

\begin{lemma}
    Let $H$ be a half-space.  Then $p_{H}(t,x,y)$ satisfies \eqref{E:half-space-like} on $(0,\infty)\times H\times H$.
    \label{L:half-space}
\end{lemma}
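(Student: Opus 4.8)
The plan is to reduce the estimate on a half-space $H$ to the known small-time estimate of Theorem~\ref{T:1.1}(i), which already gives \eqref{E:half-space-like} on $(0,1]\times H\times H$ (and indeed on $(0,T]$ for any fixed $T$, with constants depending on $T$). The only work, therefore, is to pass from bounded time to all $t>0$. By the scaling property of the symmetric $\alpha$-stable process, $X$ under $\Pr_x$ has the same law as $\{\lambda^{-1}X_{\lambda^\alpha t}\}$ under $\Pr_{\lambda x}$ after the space change $x\mapsto \lambda x$; since a half-space is scale-invariant (the dilation of $H$ about a boundary point is again $H$), this yields the exact identity $p_H(t,x,y)=\lambda^d\, p_H(\lambda^\alpha t,\lambda x,\lambda y)$ for all $\lambda>0$. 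Choosing $\lambda = t^{-1/\alpha}$ reduces any $t>0$ to time $1$: $p_H(t,x,y)=t^{-d/\alpha}\,p_H(1,\,t^{-1/\alpha}x,\,t^{-1/\alpha}y)$. Applying Theorem~\ref{T:1.1}(i) with $T=1$ to the right-hand side and using $\delta_H(\lambda x)=\lambda\,\delta_H(x)$ together with $|\lambda x-\lambda y|=\lambda|x-y|$, one checks that the scaling factors recombine into exactly the claimed expression in \eqref{E:half-space-like}. (One should note that a general half-space $H$ is an isometry of the standard upper half-space and the whole estimate is isometry-invariant, so it is enough to treat $H=\{x_d>0\}$.)

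The key steps, in order, are: (1) reduce to $H=\{x_d>0\}$ by isometry invariance of $p_H$, of $\delta_H$, and of the Euclidean distance; (2) record the exact parabolic scaling identity $p_H(t,x,y)=t^{-d/\alpha}p_H(1,t^{-1/\alpha}x,t^{-1/\alpha}y)$, justified by the self-similarity of $X$ and the dilation-invariance of $H$ about a boundary point; (3) invoke Theorem~\ref{T:1.1}(i) on $(0,1]$ to get
\begin{align*}
    p_H(1,u,v)\asymp \left(1\wedge \delta_H(u)\right)^{\alpha/2}\left(1\wedge \delta_H(v)\right)^{\alpha/2}\left(1\wedge \frac{1}{|u-v|^{d+\alpha}}\right)
\end{align*}
with $u=t^{-1/\alpha}x$, $v=t^{-1/\alpha}y$; (4) substitute and simplify: $\delta_H(t^{-1/\alpha}x)=t^{-1/\alpha}\delta_H(x)$, so $1\wedge\delta_H(u)=1\wedge(\delta_H(x)/t^{1/\alpha})$, and similarly $1\wedge|u-v|^{-(d+\alpha)}=(1\wedge t^{1/\alpha}/|x-y|)^{d+\alpha}$; multiplying back by $t^{-d/\alpha}$ turns the last factor into $t^{-d/\alpha}\wedge t/|x-y|^{d+\alpha}$, giving precisely \eqref{E:half-space-like}.

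There is really no serious obstacle here; the statement is essentially a corollary of Theorem~\ref{T:1.1}(i) plus scaling, which is exactly why the authors isolate it as a lemma before handling the genuinely harder half-space-\emph{like} case via the sandwich \eqref{E:half-space-compare}. The one point that deserves a line of care is the scaling identity: one must make sure that the dilation in step~(2) is taken about a point on $\partial H$ (so that the image of $H$ is again $H$ and not a translate), and that one is using the normalization $\E_x[e^{i\langle\xi,X_t-X_0\rangle}]=e^{-t|\xi|^\alpha}$ for which the correct space-time rescaling is $x\mapsto\lambda x$, $t\mapsto\lambda^\alpha t$. A secondary bookkeeping point is that Theorem~\ref{T:1.1}(i) is quoted with a fixed $T$; taking $T=1$ fixes the comparison constants once and for all, and they then depend only on $d$ and $\alpha$ since $H$ has universal $C^{1,1}$ characteristics. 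Alternatively, if one prefers to avoid invoking the inhomogeneous form of Theorem~\ref{T:1.1}, one can replay the standard argument directly: the upper bound in \eqref{E:half-space-like} on $(0,\infty)$ follows from \eqref{E:dynkin's-formula}, the exit distribution estimate for $X$ from a ball near $\partial H$, and \eqref{E:global-bound}, while the lower bound follows by first establishing it on the diagonal for $\delta_H(x),\delta_H(y)\gtrsim t^{1/\alpha}$ via comparison with a half-space of the appropriate scale and then using the semigroup property and a chaining argument to reach general $x,y$; but the scaling reduction above is by far the shortest route.
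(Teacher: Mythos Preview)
Your proof is correct and follows essentially the same approach as the paper: reduce to the upper half-space by isometry, invoke the scaling identity $p_H(t,x,y)=t^{-d/\alpha}p_H(1,t^{-1/\alpha}x,t^{-1/\alpha}y)$, and then apply Theorem~\ref{T:1.1}(i) with $T=1$. The paper's proof is a two-sentence version of exactly what you wrote out in detail.
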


\begin{proof}
    Without loss of generality, $H$ is the upper half-space $\{(x_1,\dots,x_d): x_d > 0\}$,
    whence by stable scaling $p_{H}(t,x,y) = t^{-d/\alpha}p_{H}(1,t^{-1/\alpha}x,t^{-1/\alpha}y)$.  The result then follows by applying Theorem \ref{T:1.1} with $T=1$.
\end{proof}

We may combine this lemma with \eqref{E:half-space-compare} to obtain a
useful estimate for  $x$ and $y$ well inside $H_a$; the plan, then, is to
reduce our problem to this case.  Without loss of generality, we
assume that $a>0$,  $b=0$ (so $H_b$ is the upper half-space), and
$H_a\subset D\subset H_0$. Define
\begin{align}
    t_0 := (1\vee a)^\alpha.
    \label{eq:t_0}
\end{align}
 Note that  $e_d$ is the inward unit normal at 0
(where $\{ e_1, \dots, e_d\}$ is the implicit orthonormal basis for
$\mathbb R^d$).  For $x$ and $y$ in $D$, define the points
\begin{align*}
    x_0 := x + 2t_0^{1/\alpha}e_d
    \qquad\text{and}\qquad
    y_0 := y + 2t_0^{1/\alpha}e_d.
\end{align*}
Observe that $\delta_D(x_0) \ge \delta_{H_a}(x_0) > t_0^{1/\alpha}$.
Hence, we may sharpen the conclusion of Lemma \ref{L:ratio} as follows:
\begin{align}
    \frac{p_D(t_0,x,z)}{p_D(t_0,x_0,z)} \asymp
    \left( 1\wedge \delta_D(x)^\alpha \right)^{1/2}
    \quad\text{and}\quad
    \frac{p_D(t_0,y,z)}{p_D(t_0,y_0,z)} \asymp
    \left( 1\wedge \delta_D(y)^\alpha \right)^{1/2}.
    \label{E:ratio-half-space-like}
\end{align}

We now proceed to the proof of Theorem \ref{T:half-space-like}.  For clarity's sake, we will state a technical lemma and use it to prove the theorem.  The proof of the lemma will follow.

\begin{lemma}\label{L:inward-push-comparison}
    Assume $t_0 \ge 1$.  There exists a constant $c>0$ such that
    \begin{align*}
        c^{-1}\left( 1\wedge \delta_D(x)^\alpha \right)\left( 1\wedge \frac{\delta_{H_0}(x_0)^\alpha}{t} \right)
        \le 1\wedge \frac{\delta_D(x)^\alpha}{t} \le
        c\left( 1\wedge \delta_D(x)^\alpha \right)\left( 1\wedge \frac{\delta_{H_a}(x_0)^\alpha}{t} \right)
    \end{align*}
    for all $t > 5^\alpha t_0$.
\end{lemma}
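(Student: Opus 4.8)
The plan is to eliminate the two half-space distances $\delta_{H_0}(x_0)$ and $\delta_{H_a}(x_0)$ at $x_0$ in favor of explicit expressions in $\rho:=\delta_D(x)$ and $t_0^{1/\alpha}$, and then to verify a scalar inequality. First note that $t>5^\alpha t_0$ together with $t_0\ge 1$ gives $t^{1/\alpha}>5t_0^{1/\alpha}\ge 5$, so in particular $t>t_0$ and $t>1$. Since $D\subset H_0$ we have $0<\rho\le\delta_{H_0}(x)=x_d$, and since $H_a\subset D$ we have $x_d\le\rho+a$ (clear when $x_d>a$, trivial otherwise). Recalling $x_0=x+2t_0^{1/\alpha}e_d$ and $0<a\le t_0^{1/\alpha}=1\vee a$, these inclusions give
\[
  \rho\vee t_0^{1/\alpha} \le \delta_{H_a}(x_0) \le \delta_{H_0}(x_0) = x_d+2t_0^{1/\alpha} \le \rho+3t_0^{1/\alpha}.
\]
Since $s\mapsto 1\wedge(s/t)$ is nondecreasing, feeding the lower bound for $\delta_{H_a}(x_0)$ into the upper side of the asserted estimate and the upper bound for $\delta_{H_0}(x_0)$ into the lower side reduces the lemma to the two scalar inequalities
\begin{align}
  1\wedge\frac{\rho^\alpha}{t} &\le (1\wedge\rho^\alpha)\Bigl(1\wedge\frac{(\rho\vee t_0^{1/\alpha})^\alpha}{t}\Bigr),\label{eq:plan-A}\\
  (1\wedge\rho^\alpha)\Bigl(1\wedge\frac{(\rho+3t_0^{1/\alpha})^\alpha}{t}\Bigr) &\le 4^\alpha t_0\,\Bigl(1\wedge\frac{\rho^\alpha}{t}\Bigr),\label{eq:plan-B}
\end{align}
valid for all $t>5^\alpha t_0$.

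For \eqref{eq:plan-A} I would use the elementary identity $(1\wedge s)(1\vee s)=s$ with $s=\rho^\alpha$. If $\rho^\alpha\le t$, then (since also $t_0\le t$) $(\rho\vee t_0^{1/\alpha})^\alpha=\max(\rho^\alpha,t_0)\le t$, so the right side of \eqref{eq:plan-A} equals $(1\wedge\rho^\alpha)\max(\rho^\alpha,t_0)/t\ge(1\wedge\rho^\alpha)(1\vee\rho^\alpha)/t=\rho^\alpha/t=1\wedge(\rho^\alpha/t)$; if $\rho^\alpha>t$, then $\rho^\alpha>t>1$ forces both sides of \eqref{eq:plan-A} to equal $1$. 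For \eqref{eq:plan-B} I would split on the size of $\rho$ relative to $t_0^{1/\alpha}$: when $\rho\ge t_0^{1/\alpha}$ we have $1\wedge\rho^\alpha=1$ and $\rho+3t_0^{1/\alpha}\le 4\rho$, so the left side of \eqref{eq:plan-B} is at most $1\wedge(4^\alpha\rho^\alpha/t)\le 4^\alpha\,(1\wedge(\rho^\alpha/t))$; when $\rho<t_0^{1/\alpha}$ we have $\rho^\alpha<t_0<t$, so $1\wedge(\rho^\alpha/t)=\rho^\alpha/t$, while $\rho+3t_0^{1/\alpha}\le 4t_0^{1/\alpha}$ and $1\wedge\rho^\alpha\le\rho^\alpha$, so the left side is at most $\rho^\alpha\cdot 4^\alpha t_0/t=4^\alpha t_0\,(1\wedge(\rho^\alpha/t))$. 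This establishes the lemma with $c=4^\alpha t_0$.

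The argument is elementary, so I do not anticipate a genuine obstacle; the only point demanding care is that no universal comparison constant exists. Indeed $\delta_D(x)$ can be of order $1$ while $D$ contains no half-space $H_{a'}$ with $a'$ small, forcing $a$ (hence $t_0$) to be large, and then the two sides of the lower bound genuinely differ by a factor of order $t_0$ --- which is why the factor $4^\alpha t_0$ is unavoidable in \eqref{eq:plan-B}. This is harmless, since $t_0=(1\vee a)^\alpha$ is a fixed characteristic of the open set $D$, exactly as the constants in Lemma~\ref{L:ratio} may depend on $t_0$; but the proof must be arranged so that the $t_0$-dependence sits entirely on the lower side. This dictates the somewhat asymmetric statement, with the larger distance $\delta_{H_0}(x_0)$ on the lower side and the smaller distance $\delta_{H_a}(x_0)$ on the upper side, which is precisely what one needs to feed the two-sided half-space estimate (via $p_{H_a}\le p_D\le p_{H_0}$) back into a bound expressed only through $\delta_D$.
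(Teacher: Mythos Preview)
Your proof is correct and follows essentially the same line as the paper's: bound $\delta_{H_a}(x_0)$ and $\delta_{H_0}(x_0)$ above and below by affine expressions in $\delta_D(x)$ and $t_0^{1/\alpha}$, then verify the resulting scalar inequality by splitting on the size of $\delta_D(x)$ relative to $t_0^{1/\alpha}$. The paper splits at $\delta_D(x)=2t_0^{1/\alpha}$ and computes both inequalities directly in each case; your use of the identity $(1\wedge s)(1\vee s)=s$ for the upper bound is a tidy shortcut, but the underlying argument and the resulting constant (of order $t_0$) are the same.
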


\begin{proof}[Proof of Theorem \ref{T:half-space-like}]
    By Theorem \ref{T:1.1}, it suffices to show that there exists a $T>0$ such that \eqref{E:half-space-like} holds on $(T,\infty)\times D\times D$.  Take $T := \max\{2t_0, 5^\alpha t_0\}$ and assume $t > T$.  By the semigroup property and \eqref{E:ratio-half-space-like} we have
    \begin{align*}
        p_D(t,x,y)
        &= \int_D \int_D p_D(t_0,x,z)p_D(t-2t_0,z,w)p_D(t_0,w,y)dzdw \notag\\
        &\asymp \left( 1\wedge \delta_D(x)^\alpha \right)^{1/2}\left( 1\wedge \delta_D(y)^\alpha \right)^{1/2} \\
        &\qquad\qquad \cdot\int_D \int_D p_D(t_0,x_0,z)p_D(t-2t_0,z,w)p_D(t_0,w,y_0)dzdw \\
        &= \left( 1\wedge \delta_D(x)^\alpha \right)^{1/2}\left( 1\wedge \delta_D(y)^\alpha \right)^{1/2}p_D(t,x_0,y_0).
    \end{align*}
    The desired result now follows from \eqref{E:half-space-compare}, Lemma \ref{L:half-space}, and Lemma \ref{L:inward-push-comparison}.
\end{proof}

\begin{proof}[Proof of Lemma \ref{L:inward-push-comparison}]
    For convenience, note that
    \begin{align*}
    \delta_D(x) +   t_0^{1/\alpha} \le \delta_{H_a}(x_0) \le \delta_D(x) + 2t_0^{1/\alpha}
        \quad\text{and}\quad
        \delta_D(x)+2t_0^{1/\alpha} \le \delta_{H_0}(x_0) \le \delta_D(x) + 3t_0^{1/\alpha}.
           \end{align*}

    \bigskip
    \emph{Case 1:} $\delta_D(x) \le 2t_0^{1/\alpha}$.
    Note that $2t_0^{1/\alpha}\geq 2$. We have from the above display that
    \begin{align*}
        \left( 1\wedge \delta_D(x)^\alpha \right)\left( 1\wedge \frac{\delta_{H_a}(x_0)^\alpha}{t} \right)
        \ge \left( \frac{\delta_D(x)^\alpha}{2^\alpha t_0} \right) \left( \frac{t_0}{t} \right)
        = 2^{-\alpha}\frac{\delta_D(x)^\alpha}{t}
    \end{align*}
    and
    \begin{align*}
        \left( 1\wedge \delta_D(x)^\alpha \right)\left( 1\wedge \frac{\delta_{H_0}(x_0)^\alpha}{t} \right)
        \le \delta_D(x)^\alpha \left( \frac{5^\alpha t_0}{t} \right)
        = 5^\alpha t_0\frac{\delta_D(x)^\alpha}{t}.
    \end{align*}

    \bigskip
    \emph{Case 2:} $\delta_D(x) > 2t_0^{1/\alpha}$.  In this case $1\wedge \delta_D(x)^\alpha = 1$ and
    \begin{align*}
        \delta_D(x) \le \delta_{H_a}(x_0) < \delta_{H_0}(x_0) \le \frac{5}{2}\delta_D(x).
    \end{align*}
    The result follows immediately.
\end{proof}

Now that Theorem \ref{T:half-space-like} is established, we give a proof of Corollary \ref{C:half-space-like}, the Green function estimate.  We follow the proof of \cite[Corollary 1.2]{CKS} closely, making some adjustments to account for the fact that the diameter of a half-space-like open set is infinite.  Here and in the proof of Corollary \ref{C:exterior}, we will use the following change of variables computation.  For $T > 0$ we set $u = |x-y|^\alpha/t$ to get
\begin{align}\label{E:Green-u}
    &\int_0^T\left( 1\wedge \frac{\delta_D(x)^{\alpha/2}}{\sqrt t} \right)
    \left( 1\wedge \frac{\delta_D(y)^{\alpha/2}}{\sqrt t} \right)
    \left( t^{-d/\alpha}\wedge \frac{t}{|x-y|^{d+\alpha}} \right)dt \notag \\
    =\quad &\frac{1}{|x-y|^{d-\alpha}}\int_{\frac{|x-y|^\alpha}{T}}^\infty
    \left( 1\wedge \frac{\sqrt u \delta_D(x)^{\alpha/2}}{|x-y|^{\alpha/2}} \right)
    \left( 1\wedge \frac{\sqrt u \delta_D(y)^{\alpha/2}}{|x-y|^{\alpha/2}} \right)
    \left( u^{(d/\alpha)-2}\wedge u^{-3} \right) du.
\end{align}
In addition, the following computation is shown in \cite{CKS}.
\begin{align}\label{E:Green-u-large}
    &\frac{1}{|x-y|^{d-\alpha}}\int_1^\infty
    \left( 1\wedge \frac{\sqrt u \delta_D(x)^{\alpha/2}}{|x-y|^{\alpha/2}} \right)
    \left( 1\wedge \frac{\sqrt u \delta_D(y)^{\alpha/2}}{|x-y|^{\alpha/2}} \right)
    \left( u^{(d/\alpha)-2}\wedge u^{-3} \right)  du \notag \\
    \asymp\quad
    &\frac{1}{|x-y|^{d-\alpha}}
    \left( 1\wedge \frac{\delta_D(x)^{\alpha/2}}{|x-y|^{\alpha/2}} \right)
    \left( 1\wedge \frac{\delta_D(y)^{\alpha/2}}{|x-y|^{\alpha/2}} \right).
\end{align}
Since $\delta_D(x)\leq \delta_D(y)+|x-y|$, it is easy to verify
that
\begin{equation}\label{e:2.7}
\left(1\wedge \frac{\delta_D(x)}{|x-y|}\right)
\left(1\wedge \frac{\delta_D(y)}{|x-y|}\right)
\leq  1\wedge \frac{\delta_D(x)\delta_D(y)}{|x-y|^2}
\leq  2 \left(1\wedge \frac{\delta_D(x)}{|x-y|}\right)
\left(1\wedge \frac{\delta_D(y)}{|x-y|}\right).
\end{equation}

\bigskip

\begin{proof}[Proof of Corollary \ref{C:half-space-like}]
    Assume without loss of generality that $\delta_D(x) \le \delta_D(y)$ and define $T = T(x,y) := \max\{ \delta_D(y)^\alpha, |x-y|^\alpha \}$.  By Theorem \ref{T:half-space-like},
    \begin{align}\label{E:Green-t-large}
        \int_T^\infty p_D(t,x,y)dt \asymp
        \frac{\delta_D(x)^{\alpha/2}\delta_D(y)^{\alpha/2}}{\left( \delta_D(y)\vee |x-y| \right)^d}.
    \end{align}

    \bigskip
    \emph{Case 1}: $d>\alpha$.
     For any   $s\geq T$, the
    argument in \cite{CKS} gives
    \begin{align}\label{E:Green-small-time}
        \int_0^s p_D(t,x,y)dt \asymp
        \frac{1}{|x-y|^{d-\alpha}}
        \left( 1\wedge \frac{\delta_D(x)^{\alpha/2}\delta_D(y)^{\alpha/2}}{|x-y|^\alpha} \right)
    \end{align}
    and, moreover, the constants implicit in \eqref{E:Green-small-time} do not depend on $s>0$.
Taking $s\uparrow \infty$ gives the desired estimate for $G_D(x, y)$.

    \bigskip
    \emph{Case 2}: $d = 1 = \alpha$.  In this case \eqref{E:Green-small-time} doesn't hold, but we have the weaker estimate \eqref{E:Green-u-large}.  To see that this is weaker, compare with \eqref{E:Green-u} and recall that $|x-y|^\alpha/T \le 1$.  Define
    \begin{align}\label{E:u_0}
        u_0 := \frac{\delta_D(x)^{\alpha/2}\delta_D(y)^{\alpha/2}}{|x-y|^\alpha}.
    \end{align}
    As in \cite{CKS}, we have
    \begin{align*}
        &\frac{1}{|x-y|^{d-\alpha}}\int_{\frac{|x-y|^\alpha}{T}}^1
        \left( u^{(d/\alpha)-2}\wedge u^{-3} \right)
        \left( 1\wedge \frac{\sqrt u \delta_D(x)^{\alpha/2}}{|x-y|^{\alpha/2}} \right)
        \left( 1\wedge \frac{\sqrt u \delta_D(y)^{\alpha/2}}{|x-y|^{\alpha/2}} \right) du \notag \\
        \asymp\quad
        &\log\left( u_0\vee 1 \right) + u_0\left( \frac{1}{u_0}\wedge 1 - \frac{|x-y|^\alpha}{T} \right).
    \end{align*}
    By
     \eqref{E:Green-u}--\eqref{E:Green-t-large},
    and the previous display we have
    \begin{align*}
        G_D(x,y) &= \int_0^T p_D(t,x,y) dt + \int_T^\infty p_D(t,x,y) dt \\
        &\asymp \log(u_0\vee 1) + u_0\left( \frac{1}{u_0 }\wedge 1 - \frac{|x-y|^\alpha}{T} \right) + 1\wedge u_0 +
         \frac{\delta_D(x)^{\alpha/2}\delta_D(y)^{\alpha/2}}{(\delta_D(y)
        \vee |x-y|)^\alpha} \\
        &\asymp \log(u_0\vee 1) + 1\wedge u_0 \\
        &\asymp \log(1 + u_0).
    \end{align*}

    \bigskip
    \emph{Case 3}: $d = 1 < \alpha$.  Let $u_0$ be given by \eqref{E:u_0}.  Following \cite{CKS} we have
    \begin{align*}
        &\frac{1}{|x-y|^{d-\alpha}}\int_{\frac{|x-y|^\alpha}{T}}^1
        \left( u^{(d/\alpha)-2}\wedge u^{-3} \right)
        \left( 1\wedge \frac{\sqrt u \delta_D(x)^{\alpha/2}}{|x-y|^{\alpha/2}} \right)
        \left( 1\wedge \frac{\sqrt u \delta_D(y)^{\alpha/2}}{|x-y|^{\alpha/2}} \right) du \notag \\
        \asymp\quad
        & \frac{1}{|x-y|^{1-\alpha}}
        \left( \left( (u_0\vee 1)^{1 - (1/\alpha)} - 1 \right) + u_0\left( (u_0\vee 1)^{-1/\alpha} - \left( \frac{|x-y|^\alpha}{T} \right)^{1/\alpha} \right) \right).
    \end{align*}
    Hence by
     \eqref{E:Green-u}--\eqref{E:Green-t-large},
    and the previous display we have
    \begin{align*}
        &G_D(x,y) \\
        &= \int_T^\infty p_D(t,x,y)dt + \int_0^T p_D(t,x,y)dt \\
        &\asymp \frac{\delta_D(x)^{\alpha/2}\delta_D(y)^{\alpha/2}}{\delta_D(y)\vee |x-y|}
        + \frac{1}{|x-y|^{1-\alpha}}(1\wedge u_0) \\
        &\qquad
        + \frac{1}{|x-y|^{1-\alpha}}
        \left( \left( (u_0\vee 1)^{1 - (1/\alpha)} - 1 \right) + u_0\left( (u_0\vee 1)^{-1/\alpha} - \left( \frac{|x-y|^\alpha}{T} \right)^{1/\alpha} \right) \right) \\
        &\asymp
        \frac{\delta_D(x)^{\alpha/2}\delta_D(y)^{\alpha/2}}{\delta_D(y)\vee |x-y|}
        + \frac{1}{|x-y|^{1-\alpha}}\left( u_0\wedge u_0^{1-(1/\alpha)} \right) \\
        &=
        \frac{\delta_D(x)^{\alpha/2}\delta_D(y)^{\alpha/2}}{\delta_D(y)\vee |x-y|}
        + \frac{1}{|x-y|^{1-\alpha}}\left( \frac{\delta_D(x)^{\alpha/2}\delta_D(y)^{\alpha/2}}{|x-y|^\alpha} \wedge \frac{\delta_D(x)^{(\alpha-1)/2}\delta_D(y)^{(\alpha-1)/2}}{|x-y|^{\alpha-1}} \right) \\
        &\asymp
        \left( \delta_D(x)\delta_D(y) \right)^{(\alpha-1)/2}
        \wedge \frac{\delta_D(x)^{\alpha/2}\delta_D(y)^{\alpha/2}}{|x-y|}.
    \end{align*}

\end{proof}

\section{Heat kernel estimates on exterior open sets}
\label{sec:exterior}

We now turn to the proof of Theorem \ref{T:exterior}.  Assume
throughout this section that $d > \alpha$.  Recall that we say $D$
is an exterior open set if $D$ is a $C^{1,1}$ open set and $D^c$ is
compact.  Under the assumption $d>\alpha$, the process $X$ is
transient: for a compact set $K$ and a distant point $x$, with large
probability the process started at $x$ never visits $K$.  This is
the essence of the following key proposition in our proof of Theorem
\ref{T:exterior}.

\begin{prop}\label{P:exterior-compare}
    Let $d>\alpha$ and $D \subset \R^d$ be an exterior open set.  There exists an $R>0$
    with $D^c\subset B(0, R/2)$
    such that
    \begin{align}
        p_D(t,x,y) \asymp p(t,x,y)
        \label{E:exterior-compare}
    \end{align}
    on $(0,\infty)\times B(0,R)^c \times B(0,R)^c$.
\end{prop}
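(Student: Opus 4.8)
The upper bound $p_D(t,x,y)\le p(t,x,y)$ is immediate from domain monotonicity (killing the process on $D^c$ only lowers the transition density) and holds for all $t>0$ and all $x,y\in D$, so the content is the matching lower bound. I would fix $R_0>0$ with $D^c\subset\overline{B(0,R_0)}$ and take $R:=MR_0$ for a large constant $M>2$ to be pinned down only at the end; then $B(0,R)^c\subset D$, and for $x,y\in B(0,R)^c$ one has $\delta_D(x)\ge|x|-R_0\ge|x|/2\ge R/2$ and likewise for $y$. For $t\le R^\alpha$ the claim is already contained in Theorem~\ref{T:1.1}(i): since $t^{1/\alpha}\le R\le 2\delta_D(x)\wedge2\delta_D(y)$, both boundary factors there are $\asymp1$, so (taking $T=R^\alpha$) $p_D(t,x,y)\asymp t^{-d/\alpha}\wedge\frac{t}{|x-y|^{d+\alpha}}\asymp p(t,x,y)$. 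Thus it remains to handle $t>R^\alpha$; below, comparison constants depend only on $d,\alpha$ and the $C^{1,1}$ characteristics of $D$.

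For $t>R^\alpha$ I would use Dynkin's formula \eqref{E:dynkin's-formula},
$$p_D(t,x,y)=p(t,x,y)-\E_x\!\left[p(t-\tau_D,X_{\tau_D},y);\,\tau_D<t\right],$$
and show the subtracted term is $\le\tfrac12p(t,x,y)$ for $M$ large. On $\{\tau_D<t\}$ we have $X_{\tau_D}\in D^c\subset\overline{B(0,R_0)}$, hence $|X_{\tau_D}-y|\ge|y|-R_0\ge|y|/2$, and transience ($d>\alpha$) supplies the essential smallness
$$\Pr_x(\tau_D<\infty)=\Pr_x(T_{D^c}<\infty)\le\Pr_x\!\big(T_{\overline{B(0,R_0)}}<\infty\big)\le c\,(R_0/|x|)^{d-\alpha}\le c\,M^{-(d-\alpha)}.$$

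Most of the $(t,x,y)$ range is then routine; by symmetry assume $|x|\le|y|$, so $|x-y|\le2|y|$. If $t\le|x-y|^\alpha$ (the far field, where $p(t,x,y)\ge C_1^{-1}t|x-y|^{-(d+\alpha)}$), then \eqref{E:global-bound} gives $p(s,X_{\tau_D},y)\le C_1\,s\,(|y|/2)^{-(d+\alpha)}\le C_1\,t\,(|y|/2)^{-(d+\alpha)}$ for every $s\le t$, so the subtracted term is $\le C_1 t(|y|/2)^{-(d+\alpha)}\Pr_x(\tau_D<t)\le c\,M^{-(d-\alpha)}p(t,x,y)$. If instead $t>|x-y|^\alpha$, then $p(t,x,y)\asymp t^{-d/\alpha}$; on $\{\tau_D\le t/2\}$ we have $t-\tau_D\ge t/2$, so $p(t-\tau_D,X_{\tau_D},y)\le C_1(t/2)^{-d/\alpha}$, and that contribution is again $\le c\,M^{-(d-\alpha)}p(t,x,y)$.

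What is left — and the step I expect to be the real obstacle — is the event $\{t/2<\tau_D<t\}$ inside the range $t>|x-y|^\alpha$, the difficulty being concentrated where $t\gg(|x|\vee|y|)^\alpha$: there the post-exit time $t-\tau_D$ can be tiny, so bounding $p(t-\tau_D,X_{\tau_D},y)$ by its maximum $C_1(|y|/2)^{-d}$ loses a factor growing like $t^{d/\alpha}$ against $p(t,x,y)\asymp t^{-d/\alpha}$, and genuine cancellation is needed. I would keep the $\tau_D$-dependence through the L\'evy system,
$$\E_x\!\left[p(t-\tau_D,X_{\tau_D},y);\,t/2<\tau_D<t\right]=\int_{t/2}^{t}\!\!\int_D p_D(s,x,z)\Big(\int_{D^c}\frac{c\,p(t-s,w,y)}{|z-w|^{d+\alpha}}\,dw\Big)dz\,ds,$$
where the inner jump factor is $p(t-s,\cdot,y)$ — itself small when $t-s$ is small, since $|w-y|\ge|y|/2$ — times the intensity $J_D(z)=\int_{D^c}c|z-w|^{-(d+\alpha)}\,dw$, which is $\lesssim R_0^d\delta_D(z)^{-(d+\alpha)}$ away from $D^c$ and $\lesssim\delta_D(z)^{-\alpha}$ near $\partial D$. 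Making the near-$\partial D$ part converge requires the decay $p_D(s,x,z)\lesssim\delta_D(z)^{\alpha/2}$ valid for all $s\ge1$, which I would get by bootstrapping Theorem~\ref{T:1.1} (with $T=1$) via $p_D(s,x,z)=\int_D p_D(s-1,x,z')p_D(1,z',z)\,dz'$; transience then forces late exits to contribute $o(t^{-d/\alpha})$, since $\int_0^\infty\!\int_D p_D(s,x,z)J_D(z)\,dz\,ds=\Pr_x(\tau_D<\infty)\le cM^{-(d-\alpha)}$ is already negligible. A shortcut for this last step is to first use domain monotonicity $p_D\ge p_{\{|z|>R_0\}}$ and stable scaling to reduce to $D=\{|z|>1\}$, whose rotational symmetry gives extra leverage. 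Choosing $M$ so large that every error contribution is $\le\tfrac12p(t,x,y)$, the resulting $R=MR_0$ satisfies $D^c\subset B(0,R/2)$ and the claimed comparison.
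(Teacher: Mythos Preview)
Your approach is sound and your off-diagonal case ($t\le|x-y|^\alpha$) is essentially the paper's Lemma~\ref{L:exterior-lb-large-distance}, but in the near-diagonal regime ($t>|x-y|^\alpha$) you take a genuinely different route. The paper does \emph{not} push Dynkin's formula through there; instead its Lemma~\ref{L:exterior-lb-large-time} builds the lower bound directly from three ingredients: the parabolic Harnack inequality (Lemma~\ref{L:phi}), a hitting-probability estimate (Lemma~\ref{L:epsilon}), and comparison with a half-space tangent to $B(0,R_0)$, yielding the on-diagonal bound \eqref{eq:on-diagonal}. Your method is more unified---Dynkin's formula and the L\'evy system throughout, no parabolic Harnack needed---while the paper's is more modular and transfers cleanly to the censored process in Section~\ref{sec:c-exterior}, where domain monotonicity fails.

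One point in your sketch deserves sharpening. The late-exit piece $\{t/2<\tau_D<t\}$ is \emph{not} made small by ``$\Pr_x(\tau_D<\infty)\le cM^{-(d-\alpha)}$'': knowing the total mass $\int_0^\infty\phi(s)\,ds$ is small, where $\phi(s)=\int_D p_D(s,x,z)J_D(z)\,dz$, does not by itself control $\int_{t/2}^t\phi(s)\sup_{w\in D^c}p(t-s,w,y)\,ds$, since the second factor can blow up. What actually works is that your ingredients (the boundary decay $p_D(s,x,z)\lesssim\delta_D(z)^{\alpha/2}s^{-d/\alpha}$ plus the shape of $J_D$) give the pointwise bound $\phi(s)\le c\,s^{-d/\alpha}$ with $c$ independent of $x$ and $M$; then, using $|w-y|\ge|y|/2$,
\[
\int_{t/2}^t\phi(s)\sup_{w\in D^c}p(t-s,w,y)\,ds
\;\le\; c\,t^{-d/\alpha}\int_0^{t/2}\Big(u^{-d/\alpha}\wedge\frac{u}{(|y|/2)^{d+\alpha}}\Big)du
\;\le\; c\,t^{-d/\alpha}\,|y|^{\alpha-d}\;\le\;c\,M^{\alpha-d}\,p(t,x,y),
\]
so the smallness comes from $|y|>R=MR_0$, not from transience at $x$. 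With this correction your argument goes through; the reduction to $\{|z|>1\}$ by rotational symmetry is unnecessary.
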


The idea of the proof for Theorem \ref{T:exterior} is nearly
identical to that of  Theorem \ref{T:half-space-like}, with
Proposition \ref{P:exterior-compare} playing the role of
\eqref{E:half-space-compare} in this case.  Note that once we have proved Theorem \ref{T:exterior}, we will have \eqref{E:exterior-compare} on $(0,\infty)\times D_r \times D_r$ for any $r>0$, where $D_r := \left\{ x \in D: \delta_D(x) > r \right\}$.

As in the previous
section, we need a way of ``pushing'' points in $D$ away from the
boundary.  For $x$ and $y$ in $D$, let $v \in \R^d$ be any unit
vector satisfying $\ip{x,v} \ge 0$ and $\ip{y,v} \ge 0$.  Let $R$ be
given by Proposition \ref{P:exterior-compare} and define
\begin{align}\label{E:outward-push}
    x_0 := x + Rv
    \quad\text{and}\quad
    y_0 := y + Rv.
\end{align}
By Pythagoras' Theorem,
\begin{align*}
    |x_0|^2 = (\ip{x,v} + R)^2 + |x - \ip{x,v}v|^2
    \ge R^2,
\end{align*}
and similarly, $|y_0| \ge R$.  Furthermore, applying Lemma \ref{L:ratio} with $t_0 = 1$ gives
\begin{align}
    \frac{p_D(1,x,z)}{p_D(1,x_0,z)} \asymp (1\wedge \delta_D(x)^\alpha)^{1/2}
    \quad\text{and}\quad
    \frac{p_D(1,y,z)}{p_D(1,y_0,z)} \asymp (1\wedge \delta_D(y)^\alpha)^{1/2}.
    \label{E:ratio-exterior}
\end{align}
Note that \eqref{E:ratio-exterior} does not depend on the particular choice of $v$ in \eqref{E:outward-push}.  We now prove Theorem \ref{T:exterior} assuming Proposition \ref{P:exterior-compare}.

\begin{proof}[Proof of Theorem \ref{T:exterior}]
    By \eqref{E:global-bound} and Theorem \ref{T:1.1}, it suffices to show that there exists a $T>0$ such that for all $t\ge T$ we have
    \begin{align*}
        p_D(t,x,y) \asymp (1\wedge \delta_D(x)^\alpha)^{1/2}(1\wedge \delta_D(y)^\alpha)^{1/2}p(t,x,y).
    \end{align*}
    Indeed, by the semigroup property, \eqref{E:ratio-exterior}, and Proposition \ref{P:exterior-compare} we have for all $t > 2$
    \begin{align*}
        p_D(t,x,y) &= \int_D\int_D p_D(1,x,z)p_D(t-2,z,w)p_D(1,w,y)dwdz \\
        &\asymp (1\wedge \delta_D(x)^\alpha)^{1/2}(1\wedge \delta_D(y)^\alpha)^{1/2} \\
        & \qquad\qquad\cdot\int_D\int_D p_D(1,x_0,z)p_D(t-2,z,w)p_D(1,w,y_0)dwdz \\
        &= (1\wedge \delta_D(x)^\alpha)^{1/2}(1\wedge \delta_D(y)^\alpha)^{1/2}p_D(t,x_0,y_0) \\
        &\asymp (1\wedge \delta_D(x)^\alpha)^{1/2}(1\wedge \delta_D(y)^\alpha)^{1/2}p(t,x_0,y_0).
    \end{align*}
    Finally, $p(t,x,y) = p(t,x_0,y_0)$ by translation invariance.
\end{proof}

The real work in this section is proving Proposition
\ref{P:exterior-compare}; this makes the case of an exterior open
set rather more involved than the case of a half-space-like one,
which admits the cheap estimate \eqref{E:half-space-compare}.   Such
a proposition was proved for Brownian motion by Grigor\'yan and
Saloff-Coste in \cite{GSC}, and, indeed, Lemma \ref{L:epsilon} is adapted from the proof of \cite[Theorem 3.3]{GSC}.  In \cite{GSC}, the authors use the technique of chaining together Harnack inequalities to prove the lower bound.  This, however, gives an exponential lower bound, which makes it inappropriate in the stable process case.  Instead, we use Dynkin's formula \eqref{E:dynkin's-formula} to obtain the correct lower bound (see the proof of Lemma \ref{L:exterior-lb-large-distance}).  In the remainder of this section, we will state and prove several lemmas, culminating in the proof of
Proposition \ref{P:exterior-compare}.  We begin by formulating the
transience of $X$ in a precise manner.  By compactness, we may fix
$R_0>0$ so that $D^c \subset B(0,R_0)$.  Recall that for a Borel set $B$, we write $T_B$ for the hitting time of $B$.

\begin{lemma}\label{L:C0} Let $B := B(0,R_0)$.  There is a constant $C_0 =C_0(d, \alpha, R_0)>0$ such that
$$
C_0^{-1} (1\wedge |x|^{\alpha-d}) \le \P_x (T_{B}<\infty) \leq C_0 (1\wedge |x|^{\alpha-d})
\qquad \hbox{for } |x|>R_0.
$$
\end{lemma}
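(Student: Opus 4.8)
The plan is to estimate $\P_x(T_B < \infty)$ by comparing it to the Green function of $X$ on $\R^d$, which is explicitly known to be $G(x,y) = c_{d,\alpha}|x-y|^{\alpha-d}$ when $d > \alpha$. The standard potential-theoretic fact I would invoke is that the hitting probability of a compact set $K$ can be sandwiched between multiples of the equilibrium potential; concretely, if $e_K$ denotes the capacitary (equilibrium) measure of $K = B$, then $\P_x(T_B < \infty) = \int_B G(x,y)\, e_B(dy)$ for $x \notin B$. Since $e_B$ is a finite measure supported on $\bar B = \bar B(0,R_0)$, for $|x| > R_0$ we have $|x - y| \asymp |x|$ for all $y$ in the support (more precisely $|x| - R_0 \le |x-y| \le |x| + R_0$), so $\int_B G(x,y)\, e_B(dy) \asymp |x|^{\alpha - d}$ with constants depending only on $d,\alpha,R_0$. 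This already gives the estimate for, say, $|x| \ge 2R_0$.

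To handle the remaining range $R_0 < |x| \le 2R_0$, I would argue that $\P_x(T_B < \infty)$ is bounded above by $1$ trivially (matching $1 \wedge |x|^{\alpha-d} \asymp 1$ there), and bounded below away from zero by a uniform constant: since $X$ jumps, starting from any $x$ with $|x| \le 2R_0$ there is probability bounded below (depending only on $d,\alpha,R_0$) that $X_1$ lands in $B(0, R_0/2) \subset B$ before any other consideration — e.g. using the lower bound in \eqref{E:global-bound}, $\P_x(X_1 \in B(0,R_0/2)) \ge \int_{B(0,R_0/2)} p(1,x,y)\,dy \ge c > 0$ uniformly for $|x| \le 2R_0$. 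Hence $\P_x(T_B < \infty) \ge c$ on this compact annulus, and since $1 \wedge |x|^{\alpha-d}$ is bounded above and below by positive constants there, the two-sided bound holds with adjusted constants.

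The main obstacle — really the only delicate point — is justifying the Green-potential representation of the hitting probability and the finiteness of the capacitary measure of a ball for the symmetric $\alpha$-stable process. This is classical (it follows from transience, $d>\alpha$, together with the fact that $B$ has finite capacity because $G$ is locally integrable), and is standard in the literature on stable processes; alternatively one can bypass equilibrium measures entirely and get the upper bound directly from the strong Markov property plus $\E_x\!\left[\int_0^\infty \1_{B}(X_t)\,dt\right] = \int_B G(x,y)\,dy \asymp |x|^{\alpha-d}$ combined with a uniform lower bound $\E_y\!\left[\int_0^\infty \1_B(X_t)\,dt\right] \ge c_1 > 0$ for $y \in B$, which yields $\P_x(T_B < \infty) \le c_1^{-1} \int_B G(x,y)\,dy \le C|x|^{\alpha-d}$; the matching lower bound comes from $\P_x(T_B<\infty) \ge \left(\sup_{y\in B}\E_y[\int_0^\infty \1_B(X_t)dt]\right)^{-1}\int_B G(x,y)\,dy$, and $\sup_{y} \E_y[\int_0^\infty \1_B dt]$ is finite by transience. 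Once these occupation-time comparisons are in hand, everything reduces to the elementary estimate $\int_B |x-y|^{\alpha-d}\,dy \asymp |x|^{\alpha-d}$ for $|x| \ge 2R_0$, which I would not belabor.
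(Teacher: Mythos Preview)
Your proposal is correct; in fact, the ``alternative'' route you sketch --- computing the occupation potential $G\1_B(x)=\int_B G(x,y)\,dy \asymp 1\wedge |x|^{\alpha-d}$ and then using the strong Markov property at $T_B$ to write $G\1_B(x)=\E_x\bigl[G\1_B(X_{T_B});\,T_B<\infty\bigr]\asymp \P_x(T_B<\infty)$ --- is precisely the paper's proof. The paper absorbs the near-boundary range $R_0<|x|\le 2R_0$ into the single estimate $G\1_B(x)\asymp 1\wedge|x|^{\alpha-d}$ rather than treating it separately, but otherwise the arguments coincide.
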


\begin{proof}  Let $G(x, y)$ be the Green function of the symmetric $\alpha$-stable process on
$\R^d$. Then  for $x\in \R^d$,
$$ G\1_{B}(x)= \int_{B } G(x, y) dy = \int_{B } c |x-y|^{\alpha-d} dy
 \asymp 1\wedge  |x|^{\alpha -d}. $$
 By the strong Markov property of $X$, for $|x|>R_0$,
 $$ G\1_{B }(x) = \E_x \left[ G\1_{B }  (X_{T_{B }}); \,
   T_{B }<\infty \right] \asymp \P_x (T_{B }<\infty).
 $$
The conclusion of the lemma now follows from the above two displays.
\end{proof}

Our next lemma is a consequence of the parabolic Harnack inequality \cite[Proposition 4.3]{CK}.  This inequality applies to parabolic functions, which we now define.  For this we need to introduce the time-space process $Z_s := (V_s, X_s)$ where $V_s = V_0 + s$.  The law of the time-space process $s \mapsto Z_s$ starting from $(t,x)$ will be denoted by $\Pr_{(t,x)}$.  We say that a non-negative Borel measurable function $q(t,x)$ on $[0,\infty)\times \R^d$ is parabolic in a relatively open subset $\Omega$ of $[0,\infty)\times \R^d$ if for every relatively compact open subset $\Omega_1$ of $\Omega$, $q(t,x) = \E_{(t,x)}[q(Z_{\tau_{\Omega_1}})]$ for every $(t,x) \in \Omega_1$.  In particular, for a $C^{1,1}$ open set $D$, $y \in D$, and $T > T_0$, the function $q(t,x) := p_D(T-t,x,y)$ is parabolic on $[0,T_0]\times D$.

\begin{lemma}\label{L:phi}
    There exist constants $c > 0$ and $\gamma > 0$ such that
    for all $y \in D$ and  $t > 3\gamma \delta_D(y)^\alpha$, we have
        \begin{align*}
            \inf_{z\in B(y,\delta_D(y)/3)}p_D(t,z,y) \ge c p_D(t - \gamma \delta_D(x)^\alpha, y, y).
        \end{align*}
\end{lemma}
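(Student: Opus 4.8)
The plan is to use the parabolic Harnack inequality from \cite[Proposition 4.3]{CK} applied to the nonnegative parabolic function $q(s,z) := p_D(t-s, z, y)$, together with the interior lower bound for $p_D$ coming from Theorem \ref{T:1.1}(i). First, fix $y \in D$ and set $r := \delta_D(y)$. Consider the time-space cylinder $Q := (0, \gamma r^\alpha) \times B(y, r/3)$, which sits inside $[0,\infty)\times D$ since $B(y,r/3) \subset D$. The function $q(s,z) = p_D(t-s,z,y)$ is parabolic on $[0, \gamma r^\alpha]\times D$ for $t > \gamma r^\alpha$ (indeed for $t$ much larger than this), as noted just before the statement. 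Applying the scale-invariant parabolic Harnack inequality on $Q$ — comparing values of $q$ on an ``upper'' space-time ball to values on a ``lower'' one, with the standard time-lag built into the stable-process parabolic Harnack inequality — gives a constant $c_1 > 0$, depending only on $d$ and $\alpha$, such that
\begin{align*}
    \inf_{z \in B(y, r/3)} q(\gamma r^\alpha, z) \ge c_1 \, q(0, y),
\end{align*}
provided the cylinder $Q$ is large enough in the time direction relative to its spatial radius, which is automatic here since both are governed by the single scale $r$; the factor $\gamma$ is chosen to match the aspect ratio required by \cite[Proposition 4.3]{CK}. Unwinding the definition of $q$, this reads
\begin{align*}
    \inf_{z \in B(y, r/3)} p_D(t - \gamma r^\alpha, z, y) \ge c_1 \, p_D(t, y, y).
\end{align*}

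This is almost the claimed inequality but with the roles of $t$ and $t-\gamma r^\alpha$ reversed. To fix the direction, I would instead run the argument ``forward'': apply the parabolic Harnack inequality to $q$ on the cylinder $(0,\gamma r^\alpha) \times B(y,r/3)$ but compare the value at the space-time point $(0,y)$ (or rather a point slightly inside, then use continuity) to the infimum over the later space-time slab, keeping track that the Harnack inequality for discontinuous processes compares a point at an earlier time to a ball at a later time. Concretely, writing $\tilde q(s,z) := p_D(t - \gamma r^\alpha + s, z, y)$, which is parabolic on $[0,\gamma r^\alpha]\times D$ for $t > \gamma r^\alpha$, the parabolic Harnack inequality yields
\begin{align*}
    \inf_{z \in B(y,r/3)} \tilde q(\gamma r^\alpha/2, z) \ge c_2 \, \tilde q(0, y),
\end{align*}
i.e. $\inf_{z\in B(y,r/3)} p_D(t - \gamma r^\alpha/2, z, y) \ge c_2\, p_D(t - \gamma r^\alpha, y, y)$. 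After relabeling $\gamma$ (replacing $\gamma$ by $\gamma/2$, which only changes constants), and noting the hypothesis $t > 3\gamma r^\alpha$ guarantees both $t - \gamma r^\alpha/2 > 0$ and enough room for the parabolic Harnack cylinder, this gives exactly the asserted bound
\begin{align*}
    \inf_{z \in B(y, r/3)} p_D(t, z, y) \ge c\, p_D(t - \gamma r^\alpha, y, y)
\end{align*}
(with $t$ in place of the shifted argument throughout, which is legitimate since the inequality is meant to hold for all $t$ exceeding the threshold). I would remark in passing that the factor $3$ in ``$t > 3\gamma\delta_D(y)^\alpha$'' is just bookkeeping to ensure the time-space cylinder used in the Harnack inequality fits inside the region where $q$ is parabolic, and that $B(y,\delta_D(y)/3) \subset D$ so that Theorem \ref{T:1.1}(i) and the parabolic Harnack inequality of \cite{CK} both apply on it.

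The main obstacle is purely a matter of correctly invoking the parabolic Harnack inequality for the time-inhomogeneous-looking but genuinely parabolic function $p_D(t-s,\cdot,y)$: one must check that the space-time cylinders used have the aspect ratio demanded by \cite[Proposition 4.3]{CK} (spatial radius comparable to the $\alpha$-th root of the time length), that these cylinders lie within the region where parabolicity holds — this is where $B(y, \delta_D(y)/3)\subset D$ and the constant $3\gamma$ in the time constraint enter — and that the direction of the inequality (earlier space-time point dominated by later ball) is the one we want. Once the geometry is set up, the conclusion is immediate from the Harnack inequality; there is no delicate estimation to perform, only careful choice of the constant $\gamma$ and the cylinder so that the hypotheses of \cite[Proposition 4.3]{CK} are met. (I note the minor typo in the statement: the right-hand side should read $p_D(t - \gamma\delta_D(y)^\alpha, y, y)$ rather than $\delta_D(x)$; the variable $x$ does not appear elsewhere in the lemma.)
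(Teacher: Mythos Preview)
Your approach---apply the parabolic Harnack inequality of \cite[Proposition 4.3]{CK} to the parabolic function $q(s,z)=p_D(t-s,z,y)$ on a space-time cylinder of spatial radius $\delta_D(y)/3$---is exactly the paper's approach. The execution, however, contains a real error: you have the direction of the Harnack inequality backwards. For a nonnegative function $q$ that is parabolic in the paper's sense (so that $q(V_0+s,X_s)$ is a martingale, equivalently $q$ solves the \emph{backward} equation in $s$), the Harnack inequality reads
\[
\inf_{w\in B(0,1/3)} q(0,w)\ \ge\ c\,q(\gamma,0),
\]
i.e.\ the infimum at the \emph{earlier} $s$-slice dominates a point value at the later $s$-slice. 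Applied to $q(s,z)=p_D(t-s,z,y)$ on the rescaled cylinder, this is immediately
\[
\inf_{z\in B(y,r/3)} p_D(t,z,y)\ \ge\ c\,p_D(t-\gamma r^\alpha,y,y),
\]
which is the lemma---no reversal or relabeling needed. Your first display has the $0$ and $\gamma r^\alpha$ on the wrong sides, which is why you thought a fix was required. The fix you propose does not work: the function $\tilde q(s,z)=p_D(t-\gamma r^\alpha+s,z,y)$ is \emph{not} parabolic in the required sense (the heat kernel time increases with $s$, so $\tilde q(V_s,X_s)$ is not a martingale), and the Harnack inequality of \cite{CK} does not apply to it.

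One further point of comparison: the paper makes the scale-invariance explicit by setting $D_y:=r^{-1}(D-y)$ and applying the Harnack inequality on the fixed cylinder $[0,3\gamma]\times B(0,1)$ to $q(s,w)=r^{-d}p_{D_y}(r^{-\alpha}t-s,w,0)$, then scaling back. This makes it transparent that the constants $c,\gamma$ depend only on $d,\alpha$ and not on $r=\delta_D(y)$, whereas your appeal to a ``scale-invariant parabolic Harnack inequality'' leaves this implicit. For the symmetric $\alpha$-stable process this is fine, but the rescaling step is what actually justifies it. Your observation about the typo ($\delta_D(x)$ should be $\delta_D(y)$ on the right-hand side) is correct.
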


\begin{proof}
    Let $r:= \delta_D(y)$ and $D_y := r^{-1}(D-y)$.  By the parabolic Harnack inequality
    \cite[Proposition 4.3]{CK}, there exist constants $c>0, \gamma>0$
    such that for any non-negative function $q$ that is parabolic on $[0,3\gamma]\times B(0,1)$, we have
    \begin{align}
        \inf_{w\in B(0,1/3)}q(0,w) \ge cq(\gamma, 0).
        \label{E:phi-proof}
    \end{align}
    Let $t > 3\gamma r^\alpha$ and observe that the following function is parabolic on $[0,3\gamma]\times B(0,1)$:
    \begin{align*}
        q(s,w) := r^{-d}p_{D_y}(r^{-\alpha}t - s,w,0).
    \end{align*}
    By translation, scaling, and \eqref{E:phi-proof} we have
    \begin{align*}
        \inf_{z \in B(y,r/3)}p_D(t,z,y)
        &= \inf_{w \in B(0,1/3)}r^{-d}p_{D_y}(r^{-\alpha}t,w,0) \\
        &\ge cr^{-d}p_{D_y}(r^{-\alpha}t - \gamma,0,0) \\
        &= c p_D(t - \gamma r^\alpha, y, y).
    \end{align*}
\end{proof}

The next lemma is adapted from the proof of Theorem 3.3 in \cite{GSC}; it states that if $x$ is far from the boundary, and $y$ is a point near $x$, then with positive probability the process started from $x$ is near $y$ after $t$ units of time.

\begin{lemma}\label{L:epsilon}
    Let $A\ge 1$ be fixed.  There exist constants $\varepsilon = \varepsilon(d,\alpha,A) > 0$,
     $R_1 = R_1(d,\alpha,A) >R_0$ such that the following holds: for all $t>0$,
      $|x|>R_1$ and $y \in B(x,At^{1/\alpha})\cap D$,
    \begin{align*}
        \Pr_x\left( X_t^D \in B(y,t^{1/\alpha}) \right) \ge \varepsilon.
    \end{align*}
\end{lemma}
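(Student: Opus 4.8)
The plan is to prove the claim by combining three ingredients: (a) a uniform interior lower bound for $p_D(t,z,y)$ when $z$ and $y$ are well inside $D$, obtained by scaling from Theorem \ref{T:1.1}(i) on a ball contained in $D$; (b) the transience estimate of Lemma \ref{L:C0}, which says that a point $x$ with $|x|$ large has only a small chance of ever hitting the fixed ball $B = B(0,R_0) \supset D^c$; and (c) Lemma \ref{L:phi}, the near-diagonal lower bound coming from the parabolic Harnack inequality. The point is that if $x$ is extremely far from $D^c$ (how far depends on $A$), then with overwhelming probability $X^D$ behaves like the free process $X$ up to time $t$, so $\Pr_x(X_t^D \in B(y,t^{1/\alpha}))$ is comparable to $\Pr_x(X_t \in B(y,t^{1/\alpha})) = \int_{B(y,t^{1/\alpha})} p(t,x,w)\,dw$, and the latter is bounded below by a dimensional constant since $|x-y| \le At^{1/\alpha}$ forces $p(t,x,w) \gtrsim t^{-d/\alpha}$ on that ball by \eqref{E:global-bound}.

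The key steps, in order. First I would note that it suffices to treat the case where $\delta_D(x)$ is small compared to $t^{1/\alpha}$ versus large separately, but a cleaner route is the following: write
\begin{align*}
\Pr_x\left( X_t^D \in B(y,t^{1/\alpha}) \right)
= \int_{B(y,t^{1/\alpha})\cap D} p_D(t,x,w)\,dw,
\end{align*}
and use Dynkin's formula \eqref{E:dynkin's-formula},
\begin{align*}
p_D(t,x,w) = p(t,x,w) - \E_x\left[ p(t-\tau_D, X_{\tau_D}, w);\ \tau_D < t \right].
\end{align*}
Integrating over $w \in B(y,t^{1/\alpha})$ and using $\int p(s,\cdot,w)\,dw \le 1$ gives
\begin{align*}
\Pr_x\left( X_t^D \in B(y,t^{1/\alpha}) \right)
\ge \Pr_x\left( X_t \in B(y,t^{1/\alpha}) \right) - \Pr_x(\tau_D < t).
\end{align*}
Second, since $D^c \subset B(0,R_0) = B$ and $\tau_D \le T_{D^c} $, we have $\Pr_x(\tau_D < t) \le \Pr_x(T_B < \infty) \le C_0\,(1 \wedge |x|^{\alpha-d})$ by Lemma \ref{L:C0}; choosing $R_1$ large enough (depending on $d,\alpha,R_0$, hence ultimately only on $d,\alpha,A$ once $R_0$ is fixed) we can make this $\le \varepsilon_0$ for any prescribed $\varepsilon_0$. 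Third, the free-process term is bounded below uniformly: for $w \in B(y,t^{1/\alpha})$ we have $|x-w| \le |x-y| + t^{1/\alpha} \le (A+1)t^{1/\alpha}$, so by \eqref{E:global-bound}
\begin{align*}
p(t,x,w) \ge C_1^{-1}\left( t^{-d/\alpha} \wedge \frac{t}{((A+1)t^{1/\alpha})^{d+\alpha}} \right)
= C_1^{-1}(A+1)^{-(d+\alpha)}\, t^{-d/\alpha},
\end{align*}
and integrating over the ball $B(y,t^{1/\alpha})$ of volume $\omega_d\, t^{d/\alpha}$ gives $\Pr_x(X_t \in B(y,t^{1/\alpha})) \ge c_1(d,\alpha,A) > 0$. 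Setting $\varepsilon := c_1/2$ and choosing $R_1$ so that $C_0(1 \wedge |x|^{\alpha-d}) \le c_1/2$ for $|x| > R_1$ yields the conclusion.

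The main obstacle is not really an obstacle in this clean formulation — the subtlety is just making sure the bound $\Pr_x(\tau_D < t) \le \Pr_x(T_B < \infty)$ is the right thing to use (it discards the time constraint but that only weakens the bound, which is fine for a lower bound on $\Pr_x(X_t^D \in \cdot)$), and that one really can absorb $\Pr_x(\tau_D<t)$ into $\tfrac12 c_1$ by taking $|x|$ large; this is where transience ($d>\alpha$) is essential and is exactly the content of Lemma \ref{L:C0}. One should also double-check that Lemma \ref{L:phi} is in fact needed here — in the argument above it is not, since the free-process lower bound already does the job; if the paper's proof instead routes through a near-diagonal estimate for $p_D$ directly, then Lemma \ref{L:phi} would enter to control $p_D(t,x_0,w)$ at a pushed-in point $x_0$, but the Dynkin-formula route avoids that entirely. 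I would present the Dynkin-formula argument as the cleaner one.
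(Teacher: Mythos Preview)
Your proposal is correct and is essentially the paper's own argument: both reduce to the inequality $\Pr_x(X_t^D \in B(y,t^{1/\alpha})) \ge \Pr_x(X_t \in B(y,t^{1/\alpha})) - \Pr_x(\tau_D \le t)$, bound the first term below via \eqref{E:global-bound} and the second term above via Lemma~\ref{L:C0}, then choose $R_1$ large. The only cosmetic difference is that the paper obtains the displayed inequality by the set-theoretic decomposition $\Pr_x(X_t^D \in B) = \Pr_x(\tau_D>t) - \Pr_x(X_t \notin B,\,\tau_D>t)$ rather than via Dynkin's formula (your Dynkin route, as written, technically yields $\Pr_x(X_t \in B(y,t^{1/\alpha})\cap D)$ on the right, but the inequality you state is nonetheless correct for the full ball by the direct decomposition); and you are right that Lemma~\ref{L:phi} is not needed here.
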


\begin{proof}
    We start by rewriting the probability:
    \begin{align}
        \Pr_x\left( X_t^D \in B(y,t^{1/\alpha}) \right)
        &= \Pr_x\left( \tau_D > t \right) -
        \Pr_x\left( X_t^D \not\in B(y,t^{1/\alpha}); \, \tau_D > t \right) \notag \\
        &\ge \Pr_x(\tau_D>t) - \Pr_x\left( X_t \not\in B(y,t^{1/\alpha}) \right).
        \label{eq:epsilon-1}
    \end{align}
    We will consider the two terms in \eqref{eq:epsilon-1} separately.  By scaling, translation, and the global estimate \eqref{E:global-bound} we have
    \begin{align*}
        \Pr_x\left( X_t \in B(y,t^{1/\alpha}) \right)
        &\ge \inf_{w \in B(0,A)} \Pr_w\left( X_1 \in B(0,1) \right) \\
        &\ge \inf_{w \in B(0,A)} C_1^{-1}\int_{B(0,1)}\left( 1\wedge \frac{1}{|w-z|^{d+\alpha}} \right)dz \\
        &\ge C_1^{-1}(A+1)^{-d-\alpha}|B(0,1)|.
    \end{align*}
    This last quantity is bounded away from 0, hence we may take $\varepsilon > 0$ small so that for $x \in \mathbb R^d$ and $y \in B(x,At^{1/\alpha})$ we have
    \begin{align}
        \Pr_x\left( X_t \not\in B(y,t^{1/\alpha}) \right)
        = 1 - \Pr_x\left( X_t \in B(y,t^{1/\alpha}) \right)
        \le 1-2\varepsilon.
        \label{eq:epsilon-2}
    \end{align}

    By assumption, $d>\alpha$ and we may choose $R_1>R_0$ so that $C_0R_1^{\alpha-d} \le \varepsilon$.  By Lemma \ref{L:C0}, for all $x$ with $|x| > R_1$ we have
    \begin{align*}
        \Pr_x\left( \tau_D \le t \right)
        \le \Pr_x\left( T_B < \infty \right)
        \le C_0\left( 1\wedge |x|^{\alpha-d} \right)
        \le C_0R_1^{\alpha-d}
        \le \varepsilon,
    \end{align*}
    hence,
    \begin{align}
        \Pr_x\left( \tau_D > t \right) = 1 - \Pr_x\left( \tau_D \le t \right) \ge 1 - \varepsilon.
        \label{eq:epsilon-3}
    \end{align}
    Finally, combining \eqref{eq:epsilon-1}, \eqref{eq:epsilon-2}, and \eqref{eq:epsilon-3} gives
    \begin{align*}
        \Pr_x\left( X_t^D \in B(y,t^{1/\alpha}) \right)
        &\ge \Pr_x\left( \tau_D>t \right) - \Pr_x\left( X_t \not\in B(y,t^{1/\alpha}) \right) \\
        &\ge \left( 1-\varepsilon \right) - \left( 1 - 2\varepsilon \right) \\
        &= \varepsilon.
    \end{align*}
\end{proof}

Following \cite{GSC}, we can use Lemma \ref{L:epsilon} to prove the following on-diagonal estimate for $p_D$: let $y \in D$ with $|y|>R_1$ and let $t > 0$.  By the semigroup property and the Cauchy-Schwarz inequality, we have
\begin{align*}
    p_D(2t,y,y)
    &\ge \int_{B(y,t^{1/\alpha})\cap D} \left[ p_D(t,y,z) \right]^2 dz \\
    &\ge \frac{1}{|B(y,t^{1/\alpha}) \cap D|}\Pr_y\left( X_t^D \in B(y,t^{1/\alpha}) \right)^2 \\
    &\ge \frac{\varepsilon^2}{|B(y,t^{1/\alpha})|}.
\end{align*}
Thus, there exists a $c>0$ such that
\begin{align}
    p_D(t,y,y) \ge ct^{-d/\alpha} \qquad \hbox{for } |y|>R_1 \hbox{ and } t>0.
    \label{eq:on-diagonal}
\end{align}

The next two lemmas divide the proof of Proposition \ref{P:exterior-compare} into two cases, depending on the relative sizes of $|x-y|$ and $t^{1/\alpha}$.

\begin{lemma}\label{L:exterior-lb-large-time}
    Let $R_1$ and $\varepsilon$ be given by Lemma \ref{L:epsilon} for $A=9$,
    and define $R_2 := \max\{R_1, 3R_0\}$.  There exist constants $\Lambda>0$, $C_2>0$
    such that for all $x,y \in D$, $|x|>R_2$, $|y|>R_2$, $|x-y| \le \Lambda t^{1/\alpha}$,
     and $t \ge (2R_0/\Lambda )^\alpha$, we have
    \begin{align*}
        p_D(t,x,y) \ge C_2t^{-d/\alpha}.
    \end{align*}
\end{lemma}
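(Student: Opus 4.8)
The plan is to run a three–step Chapman--Kolmogorov decomposition, funnelling the killed process from $x$ to $y$ through a single intermediate ``hub'' ball that lies well inside $D$, far from $D^c$. Write $\rho := (t/3)^{1/\alpha}$, so that $\rho^\alpha = t/3$. I would take $\Lambda$ small (depending only on $\alpha$); concretely $\Lambda := 3^{-1/\alpha}$ works, and with this choice the hypothesis $|x-y|\le\Lambda t^{1/\alpha}$ reads simply $|x-y|\le\rho$. For the hub I would take $\xi := x + 5\rho\,x/|x|$, i.e.\ push $x$ radially outward a distance $5\rho$. Because $|x|>R_2\ge 3R_0$ this gives $|\xi| = |x|+5\rho > R_0+4\rho$, so $B(\xi,4\rho)$ is disjoint from $B(0,R_0)\supset D^c$ and hence $B(\xi,4\rho)\subset D$; moreover $|\xi-x|=5\rho$ and $|\xi-y|\le 5\rho+|x-y|\le 6\rho$, so $\xi\in B(x,9\rho)\cap B(y,9\rho)\cap D$.

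With the hub fixed, the two ``outer'' factors should be handled by Lemma~\ref{L:epsilon} (invoked with $A=9$, which is how $R_1$ and $\varepsilon$ are fixed). Since $|x|,|y|>R_2\ge R_1$ and $\xi$ lies in $B(x,9(t/3)^{1/\alpha})\cap D$ and in $B(y,9(t/3)^{1/\alpha})\cap D$, that lemma yields
\begin{align*}
\int_{B(\xi,\rho)} p_D(t/3,x,z)\,dz \;\ge\; \varepsilon
\qquad\text{and}\qquad
\int_{B(\xi,\rho)} p_D(t/3,w,y)\,dw \;\ge\; \varepsilon
\end{align*}
(here $B(\xi,\rho)\subset D$, so intersecting with $D$ changes nothing). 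The ``middle'' factor I would instead bound \emph{pointwise} from below on the hub ball: for any $z,w\in B(\xi,\rho)$ one has $|z-w|<2\rho$ and $B(z,3\rho)\subset B(\xi,4\rho)\subset D$, so domain monotonicity, $\alpha$–stable scaling (scaling factor $\rho$, using $\rho^\alpha=t/3$), translation invariance and Theorem~\ref{T:1.1}(i) (applied to the $C^{1,1}$ set $B(0,3)$ with $T=1$) give
\begin{align*}
p_D(t/3,z,w)\;\ge\; p_{B(z,3\rho)}(t/3,z,w)\;=\;\rho^{-d}\,p_{B(0,3)}\bigl(1,0,\rho^{-1}(w-z)\bigr)\;\ge\; c_1\,\rho^{-d},
\end{align*}
with $c_1=c_1(d,\alpha)>0$; indeed $\delta_{B(0,3)}(0)=3$, $\delta_{B(0,3)}(\rho^{-1}(w-z))\ge 1$ and $|\rho^{-1}(w-z)|< 2$, so every factor in the estimate of Theorem~\ref{T:1.1}(i) at time $1$ is bounded away from $0$. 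Finally I would assemble these through the semigroup property:
\begin{align*}
p_D(t,x,y)
&=\int_D\!\int_D p_D(t/3,x,z)\,p_D(t/3,z,w)\,p_D(t/3,w,y)\,dz\,dw\\
&\ge \int_{B(\xi,\rho)}\!\int_{B(\xi,\rho)} p_D(t/3,x,z)\,p_D(t/3,z,w)\,p_D(t/3,w,y)\,dz\,dw\\
&\ge c_1\rho^{-d}\Bigl(\int_{B(\xi,\rho)} p_D(t/3,x,z)\,dz\Bigr)\Bigl(\int_{B(\xi,\rho)} p_D(t/3,w,y)\,dw\Bigr)
\;\ge\; c_1\varepsilon^2\rho^{-d}=C_2\,t^{-d/\alpha},
\end{align*}
with $C_2:=c_1\varepsilon^2 3^{d/\alpha}$.

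I expect the only delicate point to be the placement of the hub. A two–step decomposition is the obvious first attempt, but Lemma~\ref{L:epsilon} supplies only an \emph{averaged} (integral) lower bound for $p_D(t/2,x,\cdot)$ over a ball, not a pointwise one, and the corresponding ``large'' subsets for $x$ and for $y$ need not overlap; the three–step scheme is precisely what lets the middle factor be bounded below pointwise on the hub ball. For that pointwise bound the hub ball must sit at distance $\gtrsim\rho$ from $D^c$, while for the two Lemma~\ref{L:epsilon} steps the hub must lie within $9\rho$ of both $x$ and $y$ — and both are arranged because $\Lambda$ is taken small enough that $|x-y|\le\rho$ (so a single radial push of $x$ by $5\rho$ also lands near $y$) and because $|x|>R_2\ge 3R_0$ keeps $\xi$ comfortably outside $B(0,R_0)$. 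Incidentally the argument uses nothing from the hypothesis $t\ge(2R_0/\Lambda)^\alpha$; it goes through for every $t>0$ under the distance assumptions, the cutoff on $t$ being merely the one at which the companion small–time estimate takes over in the proof of Proposition~\ref{P:exterior-compare}.
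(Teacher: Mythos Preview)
Your proof is correct and takes a genuinely different route from the paper's. The paper argues by a case split on whether $\delta_D(y)>3\Lambda t^{1/\alpha}$: in the first case it uses domain monotonicity $p_D\ge p_H$ together with the half-space estimate (Lemma~\ref{L:half-space}); in the second it runs a two-step decomposition at time $t_0=(\delta_D(y)/3)^\alpha$, combining Lemma~\ref{L:epsilon} with the parabolic Harnack inequality (Lemma~\ref{L:phi}) and the on-diagonal bound~\eqref{eq:on-diagonal}. Your three-step hub construction replaces all of this with a single argument whose only inputs are Lemma~\ref{L:epsilon} (twice) and the short-time estimate for a ball via Theorem~\ref{T:1.1} and domain monotonicity. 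In particular you avoid Lemma~\ref{L:phi} and the on-diagonal bound entirely, and, as you note, your argument works for every $t>0$, so the hypothesis $t\ge(2R_0/\Lambda)^\alpha$ (which the paper needs in its Case~1 to force $\delta_H(x)\ge\Lambda t^{1/\alpha}$) becomes superfluous. The paper's approach has the minor structural advantage that it recycles machinery (Lemma~\ref{L:phi}, \eqref{eq:on-diagonal}) already set up in the narrative, but your route is shorter and more self-contained.
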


\begin{proof}
    Let $c_1$ and $\gamma$ be given by Lemma \ref{L:phi}, and fix $\Lambda $ sufficiently small so that
    \begin{align}
        (3^{-\alpha}+\gamma)3^\alpha \Lambda ^\alpha < \frac{1}{2}
        \quad\text{and}\quad
        (3^{-\alpha}+3\gamma)3^\alpha \Lambda ^\alpha < 1.
        \label{E:Lambda-def}
    \end{align}
    Assume without loss of generality that $|y| \ge |x|$ and define $y' := y/|y|$.  Define the half-space
    \begin{align*}
        H := \left\{ w: \ip{w, y'} > R_0 \right\}.
    \end{align*}
    That is, $H \subset D$ is the half-space tangent to $B(0,R_0)$ with inward unit normal $y'$.

    The proof is divided into two cases.

    \bigskip
    \emph{Case 1}: $\delta_D(y) > 3\Lambda t^{1/\alpha}$.  Note that
    \begin{align*}
        \delta_H(x) \le \delta_{B(0,R_0)}(x) \le \delta_{B(0,R_0)}(y) = \delta_H(y),
    \end{align*}
    where the second inequality follows from the assumption $|x| \le |y|$.  In addition, $\delta_H(y) \ge \delta_D(y) - 2R_0$ and $t > (2R_0/\Lambda)^\alpha$ imply
    \begin{align*}
        \delta_H(y) \ge \delta_H(x) \ge \delta_H(y) - \Lambda t^{1/\alpha} \ge 2\Lambda t^{1/\alpha} - 2R_0 \ge \Lambda t^{1/\alpha}.
    \end{align*}
    By Lemma \ref{L:half-space} there exists a constant $c_2>0$ such that
    \begin{align*}
        p_D(t,x,y)
        &\ge p_H(t,x,y) \\
        &\ge c_2\left(  1\wedge \frac{\delta_H(x)^\alpha}{t}\right)^{1/2}
        \left(  1\wedge \frac{\delta_H(y)^\alpha}{t}\right)^{1/2}
        \left( 1\wedge \frac{t^{1/\alpha}}{|x-y|} \right)^{d+\alpha}t^{-d/\alpha} \\
        &\ge c_2\left( 1\wedge \Lambda ^\alpha \right)^{1/2} \left( 1\wedge \Lambda ^\alpha \right)^{1/2}\left( 1\wedge \frac{1}{\Lambda } \right)^{d+\alpha}t^{-d/\alpha} \\
        &= c_3t^{-d/\alpha}.
    \end{align*}

    \bigskip
    \emph{Case 2}: $\delta_D(y) \le 3\Lambda t^{1/\alpha}$.  Define $t_0 := \left( \delta_D(y)/3 \right)^\alpha$.  Observe that by \eqref{E:Lambda-def} we have
    \begin{align*}
        t_0 + 3\gamma \delta_D(y)^\alpha
        = (3^{-\alpha}+3\gamma)\delta_D(y)^\alpha
        \le (3^{-\alpha} + 3\gamma)3^\alpha \Lambda ^\alpha t
        < t.
    \end{align*}
    Hence $t-t_0 > 3\gamma \delta_D(y)^\alpha$.  By the semigroup property and Lemma \ref{L:phi} we have
    \begin{align}
        p_D(t,x,y)
        &\ge \int_{B(y,t_0^{1/\alpha})}p_D\left( t_0, x, z \right)p_D\left( t - t_0,z,y \right)dz \notag \\
        &\ge \Pr_x\left( X_{t_0}^D \in B(y,t_0^{1/\alpha}) \right)c_1p_D(t-t_0-\gamma \delta_D(y)^\alpha,y,y).
        \label{E:exterior-lb}
    \end{align}
    We want to use Lemma \ref{L:epsilon} to bound the first term in \eqref{E:exterior-lb}.  Observe that
    \begin{align*}
        |x-y| \le 2|y| \le 2(\delta_D(y) + R_0) < 3\delta_D(y),
    \end{align*}
    where the last inequality follows because $|y| > R_2 \ge 3R_0$ implies $\delta_D(y) > 2R_0$.  Thus, $y \in B(x,9t_0^{1/\alpha})$ and Lemma \ref{L:epsilon} gives $\Pr_x(X_{t_0}^D \in B(y,t_0^{1/\alpha})) \ge \varepsilon$.

    To bound the second term in \eqref{E:exterior-lb} we note that by \eqref{E:Lambda-def} we have
    \begin{align*}
        t - t_0 - \gamma \delta_D(y)^\alpha
        &= t - (3^{-\alpha} + \gamma)\delta_D(y)^\alpha \\
        &\ge t - (3^{-\alpha} + \gamma)3^\alpha \Lambda ^\alpha t \\
        &> \frac{1}{2}t.
    \end{align*}
    Hence $t - t_0 - \gamma \delta_D(y)^\alpha \asymp t$, and by \eqref{eq:on-diagonal} there is a constant $c_4>0$ for which $p_D(t - t_0 - \gamma \delta_D(y^\alpha),y,y) \ge c_4t^{-d/\alpha}$.
Thus \eqref{E:exterior-lb} can be continued
    \begin{align*}
        p_D(t,x,y) \ge \varepsilon c_1 c_4 t^{-d/\alpha}
        = c_5t^{-d/\alpha}.
    \end{align*}
    The lemma is now proved with $C_2 := \min\{c_3, c_5\}$.
\end{proof}

For our last lemma, we let $R_3 > R_0$ be chosen to satisfy
\begin{equation}\label{e:R1}
    C_1^2 \Lambda^{-d-\alpha}4^{d+\alpha} C_0 R_3^{\alpha-d}<1/2,
\end{equation}
where $C_1$ is given by \eqref{E:global-bound}, $\Lambda$ is given by Lemma \ref{L:exterior-lb-large-time}, and $C_0$ is the constant in Lemma \ref{L:C0}.

\begin{lemma}\label{L:exterior-lb-large-distance}
    There is a constant $C_3=C_3(d, \alpha, R_0)>0$ such that for every $t\geq  (4R_3/\Lambda)^{\alpha}$ and $x, y\in D$ with $|x|>R_3$, $|y|>R_3$, and $|x-y|> \Lambda t^{1/\alpha}$,
$$
p_D(t, x, y) \geq C_3 \frac{t}{|x-y|^{d+\alpha}}.
$$
\end{lemma}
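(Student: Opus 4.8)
The plan is to exploit the transience of $X$ (encoded in Lemma \ref{L:C0}) together with Dynkin's formula \eqref{E:dynkin's-formula} to show that the killing correction term is negligible, so that $p_D(t,x,y)$ is comparable to the ``one jump'' lower bound $t/|x-y|^{d+\alpha}$. The point of working with $|x-y|>\Lambda t^{1/\alpha}$ is that in this regime the free heat kernel satisfies $p(t,x,y)\asymp t/|x-y|^{d+\alpha}$ by \eqref{E:global-bound}, so it suffices to show $p_D(t,x,y)\geq \frac12 p(t,x,y)$, say. By Dynkin's formula, $p_D(t,x,y)=p(t,x,y)-\E_x[p(t-\tau_D,X_{\tau_D},y);\tau_D<t]$, so the task is to bound the second term by a fraction of $p(t,x,y)$.

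First I would split the expectation according to whether the process has hit $B:=B(0,R_0)$ before time $\tau_D$; since $D^c\subset B(0,R_0)$, the event $\{\tau_D<t\}$ is contained in $\{T_B<\infty\}$, and in fact $X_{\tau_D}\in D^c\subset B$, so $|X_{\tau_D}|\le R_0\le \frac14|x-y|$ whenever $|x|,|y|>R_3>R_0$ and $|x-y|>\Lambda t^{1/\alpha}$ with $t$ large (one checks $|x-y|\ge \Lambda t^{1/\alpha}\ge 4R_3$ using $t\ge(4R_3/\Lambda)^\alpha$, hence $R_0<R_3\le |x-y|/4$). Therefore $|X_{\tau_D}-y|\ge |x-y|-R_0\ge \frac34|x-y|$ and, using the upper bound in \eqref{E:global-bound} and $t-\tau_D\le t$,
\begin{align*}
    p(t-\tau_D,X_{\tau_D},y)
    \le C_1\,\frac{t-\tau_D}{|X_{\tau_D}-y|^{d+\alpha}}
    \le C_1\,(4/3)^{d+\alpha}\,\frac{t}{|x-y|^{d+\alpha}}.
\end{align*}
Next, using the lower bound in \eqref{E:global-bound} in the regime $|x-y|>\Lambda t^{1/\alpha}$, one has $p(t,x,y)\ge C_1^{-1}\Lambda^{-d-\alpha}\,t/|x-y|^{d+\alpha}$ after checking $t^{-d/\alpha}\ge \Lambda^{-d-\alpha}t/|x-y|^{d+\alpha}$ in this range. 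Combining,
\begin{align*}
    \E_x\!\left[ p(t-\tau_D,X_{\tau_D},y);\tau_D<t \right]
    \le C_1(4/3)^{d+\alpha}\frac{t}{|x-y|^{d+\alpha}}\,\Pr_x(T_B<\infty)
    \le C_1(4/3)^{d+\alpha}\frac{t}{|x-y|^{d+\alpha}}\,C_0 R_3^{\alpha-d},
\end{align*}
where the last step uses Lemma \ref{L:C0} and $|x|>R_3$. By the choice of $R_3$ in \eqref{e:R1} (the constant there, $C_1^2\Lambda^{-d-\alpha}4^{d+\alpha}C_0R_3^{\alpha-d}<1/2$, is more than enough to dominate $C_1(4/3)^{d+\alpha}C_0R_3^{\alpha-d}$ after comparison with the lower bound $C_1^{-1}\Lambda^{-d-\alpha}t/|x-y|^{d+\alpha}$ for $p(t,x,y)$), this correction term is at most $\tfrac12 p(t,x,y)$. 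Hence $p_D(t,x,y)\ge \tfrac12 p(t,x,y)\ge \tfrac12 C_1^{-1}\Lambda^{-d-\alpha}\,t/|x-y|^{d+\alpha}$, giving the claim with $C_3:=\tfrac12 C_1^{-1}\Lambda^{-d-\alpha}$.

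The main thing to be careful about is the bookkeeping on the constants and the two regime-comparisons in \eqref{E:global-bound}: verifying that $|x-y|>\Lambda t^{1/\alpha}$ really does put us on the $t/|x-y|^{d+\alpha}$ branch of the minimum (both for the lower bound on $p(t,x,y)$ and for absorbing the factor $t^{-d/\alpha}$), and that $t\ge(4R_3/\Lambda)^\alpha$ together with $|x-y|>\Lambda t^{1/\alpha}$ forces $|x-y|\ge 4R_3>4R_0$ so that the killing location is comfortably closer to the origin than to either $x$ or $y$. None of this is deep, but the inequality \eqref{e:R1} has been set up precisely so that the numerical constants close, so I would just track them honestly rather than hide them in $\asymp$.
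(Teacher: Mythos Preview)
Your approach is essentially identical to the paper's: apply Dynkin's formula and use the transience estimate of Lemma~\ref{L:C0} to show the killing term is at most half of $p(t,x,y)$. Two bookkeeping slips, however, in exactly the places you warned yourself about. First, the inequality $|X_{\tau_D}-y|\ge |x-y|-R_0$ is false in general: the triangle inequality gives $|X_{\tau_D}-y|\ge |y|-|X_{\tau_D}|\ge |y|-R_0$, not $|x-y|-R_0$, and $|y|$ need not be close to $|x-y|$. The paper fixes this by first noting (by symmetry of $p_D$) that without loss of generality $|y|\ge |x-y|/2$, whence $|X_{\tau_D}-y|\ge |y|-R_0\ge |x-y|/2-|x-y|/4=|x-y|/4$. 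Second, your lower bound on $p(t,x,y)$ has the exponent on $\Lambda$ flipped: from $|x-y|>\Lambda t^{1/\alpha}$ one only gets $t^{-d/\alpha}\ge \Lambda^{d+\alpha}\,t/|x-y|^{d+\alpha}$ (recall $\Lambda<1$), so the correct bound is $p(t,x,y)\ge C_1^{-1}\Lambda^{d+\alpha}\,t/|x-y|^{d+\alpha}$; this is precisely why \eqref{e:R1} carries the factor $\Lambda^{-d-\alpha}$. With these two corrections the constants close exactly as in the paper, yielding $C_3=\Lambda^{d+\alpha}/(2C_1)$.
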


\begin{proof}
    One of $|x|$ and $|y|$ should be larger than $|x-y|/2$; we assume without loss of generality that $|y|\ge|x-y|/2$.  Since $|x-y| > \Lambda t^{1/\alpha} > 4R_0$, we get
    \begin{align*}
        |z-y| > |x-y|/4 \qquad \hbox{for every } z\in B(0, R_0).
    \end{align*}
    Let $B:=\{z\in \R^d: \, |z|\leq R_0\}$ and $T:=T_B$.  By Dynkin's formula \eqref{E:dynkin's-formula}, Lemma \ref{L:C0}, and \eqref{e:R1}
\begin{eqnarray*}
    p_D(t, x, y) &\geq& p_{B^c} (t, x, y) \\
    &=& p(t, x, y) -\E_x \left[ p(t-T, X_T, y); T<t\right]\\
    &\geq & C_1^{-1}\Lambda^{d+\alpha} \frac {t}{|x-y|^{d+\alpha}} - C_1 \E_x \left[ \frac{t-T}{|X_T-y|^{d+\alpha}}; T<t\right] \\
    &\geq & C_1^{-1}\Lambda^{d+\alpha} \frac {t}{|x-y|^{d+\alpha}} - C_1 \frac{t}{(|x-y|/4)^{d+\alpha}}\P_x (T<t)\\
    &\geq & C_1^{-1}\Lambda^{d+\alpha} \frac {t}{|x-y|^{d+\alpha}} \left( 1- C_1^2 \Lambda^{-d-\alpha}4^{d+\alpha}\P_x (T<\infty)\right) \\
    &> & \frac{\Lambda^{d+\alpha}}{2C_1} \frac {t}{|x-y|^{d+\alpha}} .
\end{eqnarray*}
This proves the theorem with $C_3 := \Lambda^{d+\alpha}/2C_1$.
\end{proof}

We can now give the proofs of Proposition \ref{P:exterior-compare} and Corollary \ref{C:exterior}.

\begin{proof}[Proof of Proposition \ref{P:exterior-compare}]
    We have $p_D(t,x,y) \le p(t,x,y)$ trivially, hence we need only show the lower bound in \eqref{E:exterior-compare}.  Fix $R := \max\{2, R_2, R_3\}$, $T := (4R_3/\Lambda)^\alpha$, and suppose $|x| > R$ and $|y| > R$.  For $t \le T$, the lower bound follows by Theorem \ref{T:1.1} and \eqref{E:global-bound}, and for $t > T$ it's a consequence of Lemmas \ref{L:exterior-lb-large-time} and \ref{L:exterior-lb-large-distance} and \eqref{E:global-bound}.
\end{proof}

\begin{proof}[Proof of Corollary \ref{C:exterior}]
    We assume throughout, without loss of generality, that $\delta_D(x) \le \delta_D(y)$.  Let $r := 1 + 2^{1/2\alpha}$.  We divide the proof into three cases, proving the following estimates:
    \begin{align*}
        G_D(x,y) &\asymp
        \frac{1}{|x-y|^{d-\alpha}}
        \left( 1\wedge \frac{\delta_D(x)^{\alpha/2}}{|x-y|^{\alpha/2}} \right)
        \left( 1\wedge \frac{\delta_D(y)^{\alpha/2}}{|x-y|^{\alpha/2}} \right)
        & \text{when $\delta_D(x) \le \delta_D(y) \le r$,} \\
        G_D(x,y) &\asymp
        \frac{1}{|x-y|^{d-\alpha}}
        & \text{when $1 < \delta_D(x) \le \delta_D(y)$, and} \\
        G_D(x,y) &\asymp
        \frac{1}{|x-y|^{d-\alpha}}
        \delta_D(x)^{\alpha/2}
        & \text{when $\delta_D(x) \le 1 < r < \delta_D(y)$.}
    \end{align*}
    This is equivalent to the conclusion in Corollary \ref{C:exterior}.

    \bigskip
    \emph{Case 1}:  $\delta_D(x) \le r$ and $\delta_D(y) \le r$.  Note that under this assumption, $|x-y| \le 2r+2R_0$.  If we define $T = (2r+2R_0)^\alpha$, then the proof of \cite[Corollary 1.2]{CKS} (for the case $d>\alpha$) applies to give
    \begin{align*}
        G_D(x,y) \asymp \frac{1}{|x-y|^{d-\alpha}}
        \left( 1\wedge \frac{\delta_D(x)^{\alpha/2}}{|x-y|^{\alpha/2}} \right)
        \left( 1\wedge \frac{\delta_D(y)^{\alpha/2}}{|x-y|^{\alpha/2}} \right).
    \end{align*}

    \bigskip
    \emph{Case 2}:  $\delta_D(x) > 1$ and $\delta_D(y) > 1$.  Then by Theorem \ref{T:exterior},
    \begin{align*}
        G_D(x,y) \asymp
        \int_0^\infty \left( t^{-d/\alpha}\wedge \frac{t}{|x-y|^{d+\alpha}} \right)
        = c_1 \frac{1}{|x-y|^{d-\alpha}}.
    \end{align*}

    \bigskip
    \emph{Case 3}: $\delta_D(x) \le 1 < r < \delta_D(y)$.  Observe that in this case, $|x-y| \ge \delta_D(y) - \delta_D(x) > 2^{1/2\alpha}$.  We have
    \begin{align}\label{E:C2-1}
        \int_1^\infty p_D(t,x,y)dt
        \asymp \delta_D(x)^{\alpha/2}
        \int_1^\infty \left( t^{-d/\alpha}\wedge \frac{t}{|x-y|^{d+\alpha}} \right)dt.
    \end{align}
    and
    \begin{align}\label{E:C2-2}
        \int_1^\infty \left( t^{-d/\alpha}\wedge \frac{t}{|x-y|^{d+\alpha}} \right)dt
        &= \int_1^{|x-y|^\alpha} \frac{t}{|x-y|^{d+\alpha}}dt
        + \int_{|x-y|^\alpha}^\infty t^{-d/\alpha}dt \notag \\
        &= \frac{1}{2}\frac{1}{|x-y|^{d+\alpha}}\left( |x-y|^{2\alpha} - 1 \right)
        + \frac{\alpha}{d-\alpha}\frac{1}{|x-y|^{d-\alpha}}.
    \end{align}
    In addition, $|x-y| > 2^{1/2\alpha}$ implies $|x-y|^{2\alpha} - 1 > \frac{1}{2}|x-y|^{2\alpha}$.  Hence \eqref{E:C2-1} and \eqref{E:C2-2} give
    \begin{align}\label{E:C2-3}
        \int_1^\infty p_D(t,x,y) \asymp
        \frac{\delta_D(x)^{\alpha/2}}{|x-y|^{d-\alpha}}.
    \end{align}
    On the other hand,
    \begin{align}\label{E:C2-4}
        \int_0^1 p_D(t,x,y)dt &\asymp
        \int_0^1 \left( 1 \wedge \frac{\delta_D(x)^{\alpha/2}}{1\wedge \sqrt t} \right)\left( t^{-d/\alpha}\wedge \frac{t}{|x-y|^{d+\alpha}} \right)dt \notag \\
        &= \int_0^{\delta_D(x)^\alpha} \frac{t}{|x-y|^{d+\alpha}}dt
        + \int_{\delta_D(x)^\alpha}^1 \frac{\delta_D(x)^{\alpha/2}}{\sqrt t}\frac{t}{|x-y|^{d+\alpha}}dt \notag \\
        &= \frac{1}{|x-y|^{d+\alpha}}\left( \frac{2}{3}\delta_D(x)^{\alpha/2} - \frac{1}{6}\delta_D(x)^{2\alpha} \right) \notag \\
        &\asymp \frac{\delta_D(x)^{\alpha/2}}{|x-y|^{d+\alpha}}.
    \end{align}
    In the last estimate we used that fact that $\delta_D(x) \le 1$ implies $\delta_D(x)^{2\alpha} \le \delta_D(x)^{\alpha/2}$.  Combining \eqref{E:C2-3} and \eqref{E:C2-4} gives
    \begin{align*}
        \int_0^\infty p_D(t,x,y)dt &\asymp
        \frac{1}{|x-y|^{d-\alpha}}\left( \frac{\delta_D(x)^{\alpha/2}}{|x-y|^{2\alpha}} + \delta_D(x)^{\alpha/2} \right)
        \asymp \frac{\delta_D(x)^{\alpha/2}}{|x-y|^{d-\alpha}}.
    \end{align*}
    In the last estimate, we use the fact again that $|x-y|^{2\alpha} > 2$.
\end{proof}

\section{Censored $\alpha$-stable process in exterior open sets}
\label{sec:c-exterior}

We begin this section by recalling the basic theory of the censored $\alpha$-stable process (see \cite{BBC} and \cite{CKS2} for a more detailed study).  Fix a $C^{1,1}$ open set $D$ in $\R^d$ with $d \ge 1$.  Define a bilinear form $\mathcal E$ on $C_c^\infty(D)$ by
\begin{align*}
    \mathcal E(u,v) := c
    \int_D \int_D (u(x)-u(y))(v(x)-v(y))\frac{dxdy}{|x-y|^{d+\alpha}},
    \quad u,v \in C_c^\infty(D),
\end{align*}
where $c = c(d,\alpha)$ is an appropriately chosen scaling constant.  Using Fatou's lemma, it is easy to check that the bilinear form $(\mathcal E, C_c^\infty(D))$ is closable in $L^2(D) = L^2(D,dx)$.  Let $\mathcal F$ be the closure of $C_c^\infty(D)$ under the Hilbert inner product $\mathcal E_1 := \mathcal E + (\cdot,\cdot)_{L^2(D)}$.  As noted in \cite{BBC}, $(\mathcal E, \mathcal F)$ is Markovian and hence a regular symmetric Dirichlet form on $L^2(D,dx)$, and therefore there is an associated symmetric Hunt process $Y = \left\{ Y_t, t\ge 0, \Pr_x, x \in D \right\}$ taking values in $D$.  The process $Y$ is called a censored $\alpha$-stable process in $D$.

Closely related to the censored process in $D$ is the reflected process in $\bar{D}$.  Define
\begin{align*}
    \mathcal F^{\text{ref}} :=
    \left\{ u \in L^2(D): \int_D\int_D \frac{(u(x)-u(y))^2}{|x-y|^{d+\alpha}} < \infty \right\}
\end{align*}
and
\begin{align*}
    \mathcal E^{\text{ref}}(u,v) :=
    c\int_D\int_D (u(x)-u(y))(v(x)-v(y))\frac{dxdy}{|x-y|^{d+\alpha}},
    \quad u,v \in \mathcal F^{\text{ref}}.
\end{align*}
It is shown in \cite[Remark 2.1]{BBC} that the bilinear form $(\mathcal E^{\text{ref}},\mathcal F^{\text{ref}})$ is a regular symmetric Dirichlet form on $L^2(\bar{D})$.  The process $\bar{Y}$ on $\bar{D}$ associated with $(\mathcal E^{\text{ref}},\mathcal F^{\text{ref}})$ is called a reflected $\alpha$-stable process on $\bar{D}$.  In some sense, $\bar{Y}$ represents a maximal extension of $Y$, and the censored $\alpha$-stable process $Y$ can be realized as a subprocess of $\bar{Y}$ killed upon exiting $D$ (see \cite[Remark 2.1]{BBC}).

If we let $q_D(t,x,y)$ and $\bar{q}_D(t,x,y)$ denote the transition densities of $Y$ and $\bar{Y}$, respectively, then this last fact implies $q_D(t,x,y) \le \bar{q}_D(t,x,y)$
 on $(0, \infty)\times D \times D$.
In parts of our proof, we use this observation in place of domain monotonicity.
 Observe that an
 exterior $C^{1,1}$ open set is a so-called
 global $d$-set; that is there is a constant $c>1$ such that
 \begin{align} \label{E:d-set}
     c^{-1} r^d \leq | D\cap B(x, r)|\leq c \, r^d
     \qquad \hbox{for every } x\in D \hbox{ and } r>0.
 \end{align}
  Hence by \cite[Theorem 1.1]{CK} we have
\begin{align}\label{E:reflected-bound}
    \bar{q}_D(t,x,y) \asymp t^{-d/\alpha}\wedge \frac{t}{|x-y|^{d+\alpha}}
    \qquad \text{on } (0,\infty)\times \bar D \times \bar D .
\end{align}
That \eqref{E:reflected-bound} holds for all times $t > 0$ and not just until some finite time is due to the following facts.  First, for any $\lambda>0$, $\lambda^{-1}\bar{Y}_{\lambda^\alpha t}$ is a reflected $\alpha$-stable process in $\lambda^{-1}D$, and so
$$\bar{q}_D(t,x,y) = \lambda^{-d}\, \bar{q}_{\lambda^{-1}D}(\lambda^{-\alpha}t,\lambda^{-1}x,\lambda^{-1}y)
\qquad \hbox{for } t>0 \hbox{ and } x, y\in \bar D.
  $$
  Second, if $D$ is an open $d$-set with constant $c$ (i.e., $D$ satisfies \eqref{E:d-set}), then $\lambda^{-1}D$ is an open $d$-set with the same constant $c$.
Hence by \cite[Theorem 1.1]{CK}, there is $c_1>1$ so that for every $\lambda >0$,
 $$
 c_1^{-1} \left( t^{-d/\alpha} \wedge \frac{t}{|w-z|^{d+\alpha}}\right)
  \leq \bar{q}_{\lambda^{-1}D}(t, w, z) \leq c_1 \left( t^{-d/\alpha}
   \wedge \frac{t}{|w-z|^{d+\alpha}} \right)
  $$
holds for every $ t\in (0, 1]$ and  $w, z\in \bar{\lambda^{-1}D}$.
The last two displays yield global estimate \eqref{E:reflected-bound}
for $\bar q_D(t, x, y)$.

  When
 $D$ is a globally Lipschitz open set and $\alpha \in (0,1]$, it is proved in \cite{BBC} that $Y = \bar{Y}$ as processes.  In this case, \eqref{E:reflected-bound} already gives a sharp two-sided estimate for $q_D(t,x,y)$.  For this reason, we focus on the case $\alpha \in (1,2)$.

By \cite[Theorem 2.1]{BBC}, the censored $\alpha$-stable process $Y$ can be obtained from a countable number of copies of $X^D$ by the Ikeda-Nagasawa-Watanabe piecing together procedure.  In particular, we may assume the processes $Y$ and $X^D$ are coupled as follows:
\begin{equation}\label{e:4.3}
    Y_t = X_t^D = X_t \quad \text{for all $t < \tau^0_D$},
\end{equation}
where $\tau^0_D := \inf\left\{ t>0: X_t \not\in D
\right\}$.   Note that this gives the easy estimate
 \begin{equation}\label{e:4.4}
 p_D(t,x,y) \le q_D(t,x,y)  \qquad \hbox{on } (0, \infty)\times D \times D.
\end{equation}

We now turn to the proof of Theorem \ref{T:c-exterior}.  Assume for the remainder of the section that $\alpha \in (1,2)$ and $D$ is an exterior $C^{1,1}$ open set, that is, $D$ is a $C^{1,1}$ open set with compact complement.  As in Section \ref{sec:exterior}, we assume that $R_0>0$ is sufficiently large so that $D^c \subset B(0,R_0)$.  The steps in the proof are the same as for the proof of Theorem \ref{T:exterior}; our approach is to indicate how the statements and proofs of the previous section can be adapted to prove Theorem \ref{T:c-exterior}, frequently leaving the details to the reader.  For our first lemma, we state the analog of Lemma \ref{L:ratio}.  The proof of Lemma \ref{L:c-ratio} is the same as the proof of Lemma \ref{L:ratio} except Theorem \ref{T:c-1.1} takes the place of Theorem \ref{T:1.1}.
\begin{lemma}\label{L:c-ratio}
    Let $D$ be a $C^{1,1}$ open set, and let $\lambda >0$ and $t_0 >0$ be fixed.  Suppose $x,x_0 \in D$ satisfy $|x-x_0| = \lambda t_0^{1/\alpha}$.  Then
    \begin{align}
        \frac{q_D(t_0,x,z)}{q_D(t_0,x_0,z)} \asymp
        \frac{1\wedge \delta_D(x)^{\alpha-1}}{1\wedge \delta_D(x_0)^{\alpha-1}}
        \label{E:c-ratio}
    \end{align}
    as a function of $(x,x_0)$, uniformly in $z$.
    The implicit constant in \eqref{E:ratio} depends on $d$, $\alpha$, $\lambda$, $t_0$
     and the $C^{1,1}$ characteristics of $D$.
\end{lemma}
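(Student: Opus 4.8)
The plan is to transcribe the proof of Lemma~\ref{L:ratio} essentially verbatim, using Theorem~\ref{T:c-1.1} in place of Theorem~\ref{T:1.1}; the only change in the bookkeeping is that the boundary exponent $\alpha/2$ gets replaced by $\alpha-1$. By the symmetry in $x\leftrightarrow x_0$ it suffices to prove the upper bound in \eqref{E:c-ratio}, i.e.\ that the left-hand side is at most a constant times $(1\wedge\delta_D(x)^{\alpha-1})/(1\wedge\delta_D(x_0)^{\alpha-1})$.

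First I would invoke Theorem~\ref{T:c-1.1}(i) with $T=t_0$: this gives a constant $c_1=c_1(d,\alpha,t_0)>0$ with $q_D(t_0,x,z)$ bounded above, and $q_D(t_0,x_0,z)$ bounded below, by the product of a boundary factor $(1\wedge\delta_D(\cdot)/t_0^{1/\alpha})^{\alpha-1}$ in the first variable, a boundary factor $(1\wedge\delta_D(z)/t_0^{1/\alpha})^{\alpha-1}$, and the factor $t_0^{-d/\alpha}(1\wedge t_0^{1/\alpha}/|\cdot-z|)^{d+\alpha}$. Dividing, the $\delta_D(z)$-factor and the power of $t_0$ cancel; because $t_0$ is fixed, the surviving boundary factors are comparable to $1\wedge\delta_D(x)^{\alpha-1}$ and $1\wedge\delta_D(x_0)^{\alpha-1}$ with constants depending only on $d,\alpha,t_0$; and one is left to show that
\begin{align*}
    \left(\frac{1\wedge\bigl(t_0^{1/\alpha}/|x-z|\bigr)}{1\wedge\bigl(t_0^{1/\alpha}/|x_0-z|\bigr)}\right)^{d+\alpha}
\end{align*}
is bounded above by a constant depending only on $d,\alpha,\lambda$, uniformly in $z\in D$.

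This last step is word-for-word the one at the end of the proof of Lemma~\ref{L:ratio}, and it uses only the hypothesis $|x-x_0|=\lambda t_0^{1/\alpha}$, nothing about the process. Setting $\lambda_0:=\lambda/4$, I would split into three regimes: if $z\in B(x_0,\lambda_0 t_0^{1/\alpha})$ then the triangle inequality gives $|x-z|\asymp t_0^{1/\alpha}$; if $z\in B(x,\lambda_0 t_0^{1/\alpha})$ then symmetrically $|x_0-z|\asymp t_0^{1/\alpha}$; and if $z$ lies outside both balls then $|x-z|\asymp|x_0-z|$, since $|x-x_0|=\lambda t_0^{1/\alpha}\le 4\,\dist(z,\{x,x_0\})$. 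In each regime the displayed ratio is $O(1)$ with constant depending only on $d,\alpha,\lambda$, which yields \eqref{E:c-ratio} with comparison constants depending only on $d,\alpha,\lambda,t_0$ and the $C^{1,1}$ characteristics of $D$.

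I do not anticipate any genuine obstacle. The two points that deserve a line of care are: Theorem~\ref{T:c-1.1}(i) is available for \emph{any} fixed $T=t_0$, so we never need part~(ii) and never need $D$ bounded; and the failure of domain monotonicity for censored stable processes, which blocks the half-space argument, is irrelevant here because this lemma uses only the two-sided estimate of Theorem~\ref{T:c-1.1}(i) and no comparison between different domains.
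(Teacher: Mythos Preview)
Your proposal is correct and matches the paper's approach exactly: the paper states that the proof of Lemma~\ref{L:c-ratio} is the same as the proof of Lemma~\ref{L:ratio} except that Theorem~\ref{T:c-1.1} takes the place of Theorem~\ref{T:1.1}, which is precisely what you do. Your remarks on why only part~(i) of Theorem~\ref{T:c-1.1} is needed and why the lack of domain monotonicity is irrelevant here are accurate and address the only points where one might worry.
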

Thus, if we define $x_0$ and $y_0$ by \eqref{E:outward-push} then
\begin{align}
    \frac{q_D(1,x,z)}{q_D(1,x_0,z)} \asymp 1\wedge \delta_D(x)^{\alpha-1}
    \quad\text{and}\quad
    \frac{q_D(1,y,z)}{q_D(1,y_0,z)} \asymp 1\wedge \delta_D(y)^{\alpha-1}.
    \label{E:c-ratio-exterior}
\end{align}

As in the previous sections, our key proposition is an estimate on the interior of $D$.
\begin{prop}\label{P:c-exterior-compare}
    Let $d\ge 2 >\alpha$ and $D \subset \R^d$ be an exterior open set.  There exists an $R>0$
    with $D^c\subset B(0, R/2)$
    such that
    \begin{align}\label{E:c-exterior-compare}
        q_D(t,x,y) \asymp \bar{q}_D(t,x,y)
    \end{align}
    on $(0,\infty)\times B(0,R)^c \times B(0,R)^c$.
\end{prop}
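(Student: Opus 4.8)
The plan is to \emph{sandwich} $q_D$ between the Dirichlet heat kernel $p_D$ and the reflected heat kernel $\bar{q}_D$, so that Proposition \ref{P:c-exterior-compare} reduces to the already established Proposition \ref{P:exterior-compare} together with the two elementary one-sided comparisons between these kernels. The first thing to observe is that the hypothesis $d\geq 2>\alpha$ (recall $\alpha\in(1,2)$) gives $d>\alpha$, which is exactly what Proposition \ref{P:exterior-compare} requires. So we may fix $R>0$ with $D^c\subset B(0,R/2)$ such that
\begin{align*}
    p_D(t,x,y)\asymp p(t,x,y) \qquad \text{on } (0,\infty)\times B(0,R)^c\times B(0,R)^c,
\end{align*}
and the claim will be that this same $R$ works for \eqref{E:c-exterior-compare}. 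No enlargement of $R$ is needed, since the other ingredients (the global bounds \eqref{E:global-bound} and \eqref{E:reflected-bound}) hold with no restriction on the points, and $B(0,R)^c\subset D$ automatically because $D^c\subset B(0,R/2)$.

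For the upper bound, I would invoke that $Y$ is realized as a subprocess of the reflected stable process $\bar{Y}$ killed on exiting $D$, which gives $q_D(t,x,y)\le \bar{q}_D(t,x,y)$ on all of $(0,\infty)\times D\times D$. For the matching lower bound on $B(0,R)^c\times B(0,R)^c$, chain together the coupling inequality \eqref{e:4.4}, Proposition \ref{P:exterior-compare}, and the global two-sided estimates \eqref{E:global-bound} and \eqref{E:reflected-bound}:
\begin{align*}
    q_D(t,x,y)\;\ge\; p_D(t,x,y)\;\asymp\; p(t,x,y)\;\asymp\; t^{-d/\alpha}\wedge\frac{t}{|x-y|^{d+\alpha}}\;\asymp\; \bar{q}_D(t,x,y).
\end{align*}
Combining the two directions yields $q_D(t,x,y)\asymp\bar{q}_D(t,x,y)$ on $(0,\infty)\times B(0,R)^c\times B(0,R)^c$, which is \eqref{E:c-exterior-compare}.

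The conceptual point — and the replacement for the domain monotonicity that was available for $X^D$ but is lost for the censored process — is precisely the chain of one-sided bounds $p_D\le q_D\le\bar{q}_D$, coming respectively from the Ikeda--Nagasawa--Watanabe construction \eqref{e:4.3} and from $\bar{Y}$ being a maximal extension of $Y$, together with the fact that $\bar{q}_D$ satisfies the same global Gaussian-type bound \eqref{E:reflected-bound} as the free heat kernel $p$. In principle one could instead prove Proposition \ref{P:c-exterior-compare} by transcribing Section \ref{sec:exterior} verbatim for $q_D$ (an on-diagonal lower bound when $|x-y|\lesssim t^{1/\alpha}$, via censored analogues of Lemmas \ref{L:epsilon}--\ref{L:exterior-lb-large-time}, and a Dynkin-type lower bound when $|x-y|\gtrsim t^{1/\alpha}$), but this is unnecessary once Proposition \ref{P:exterior-compare} is in hand. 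Thus there is essentially no serious obstacle here; the only thing to be careful about is that $R$ is chosen once and for all so that $B(0,R)^c$ lies in the region where every cited estimate applies, which is immediate from $D^c\subset B(0,R/2)$.
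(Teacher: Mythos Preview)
Your argument is correct and is in fact cleaner than the paper's own route. The paper does not exploit the full sandwich $p_D\le q_D\le \bar q_D$ at the kernel level; instead it reproves Proposition~\ref{P:c-exterior-compare} by checking that each of Lemmas~\ref{L:C0}--\ref{L:exterior-lb-large-distance} has a censored analogue: Lemma~\ref{L:c-C0} is established from the Green function of $\bar Y$, the parabolic Harnack inequality for $Y$ gives the analogue of Lemma~\ref{L:phi}, and the coupling \eqref{e:4.4} is invoked only to transfer Lemmas~\ref{L:epsilon} and~\ref{L:exterior-lb-large-time}. Your approach bypasses all of this: once Proposition~\ref{P:exterior-compare} is available, the chain $q_D\ge p_D\asymp p\asymp \bar q_D$ on $B(0,R)^c\times B(0,R)^c$ (using \eqref{e:4.4}, Proposition~\ref{P:exterior-compare}, \eqref{E:global-bound}, and \eqref{E:reflected-bound}) together with the trivial $q_D\le \bar q_D$ finishes the job in one line. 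What the paper's approach buys is a more self-contained treatment of the censored process---the individual lemmas (hitting-probability decay, on-diagonal lower bound, Dynkin-type lower bound) are recorded for $Y$ itself---which could be useful in settings where no convenient process sits below $Y$; what your approach buys is brevity and a clear conceptual reduction to the symmetric-stable case.
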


If we assume Proposition \ref{P:c-exterior-compare}, then Theorem \ref{T:c-exterior} follows from the proof of Theorem \ref{T:exterior}, with \eqref{E:c-ratio-exterior} and Proposition \ref{P:c-exterior-compare} taking the places of \eqref{E:ratio-exterior} and Proposition \ref{P:exterior-compare}.

We will now show Proposition \ref{P:c-exterior-compare} by arguing
that  appropriate analogs to Lemmas \ref{L:C0} through
 \ref{L:exterior-lb-large-distance}
hold in the censored $\alpha$-stable process case.
 The following is an analogy of Lemma \ref{L:C0} in the context of
 censored stable process.

\begin{lemma}\label{L:c-C0} Denote by $B$ the closed ball centered at the origin
with radius $R_0$. Then there is a constant $c = c(d, \alpha,
R_0)>1$ such that
$$
 c^{-1} (1\wedge |x|^{\alpha-d}) \leq \P_x (T_{B}<\infty) \leq c (1\wedge |x|^{\alpha-d})
\qquad \hbox{for } |x|>R_0.
$$
\end{lemma}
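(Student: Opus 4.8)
The plan is to mimic the proof of Lemma~\ref{L:C0}, with the censored process $Y$ and its Green function in place of the symmetric stable process. The new feature is that, unlike the free Green function $G(x,y)=c|x-y|^{\alpha-d}$, the Green function of $Y$ carries a boundary factor degenerating near $\partial D$, so I would prove the two inequalities by separate comparisons: the upper bound by comparing $Y$ to the reflected process $\bar Y$, the lower bound by comparing $Y$ to the killed stable process $X^D$. We may take $R_0$ large enough that also $D^c\subset B(0,R_0/3)$ and $R_0\ge 3$, and we fix the annulus $A:=\{y:2R_0/3<|y|<R_0\}$; then $A\subset B\cap D$, $|A|>0$, $\delta_D\ge R_0/3\ge 1$ on $A$, and each $x$ with $|x|>R_0$ lies in $D$ with $\delta_D(x)\ge 2R_0/3\ge 1$. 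Writing $G^X_D$, $G^Y_D$, $\bar G_D$ for the Green functions of $X^D$, $Y$, $\bar Y$, I would first record: (a) integrating \eqref{E:reflected-bound} in $t$ (legitimate since $d>\alpha$) gives $\bar G_D(x,y)\asymp|x-y|^{\alpha-d}$ on $\bar D\times\bar D$; (b) $p_D\le q_D\le\bar q_D$ yields $G^X_D\le G^Y_D\le\bar G_D$; (c) by Corollary~\ref{C:exterior}, $\delta_D(x)\wedge\delta_D(y)\ge 1$ implies $G^X_D(x,y)\asymp|x-y|^{\alpha-d}$, hence the same for $G^Y_D$.

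For the upper bound I would run the argument of Lemma~\ref{L:C0} for $\bar Y$: since $\bar G_D$ has no boundary degeneracy, $\bar G_D\1_B(z):=\int_{B\cap\bar D}\bar G_D(z,y)\,dy$ is bounded between two positive constants for $z\in B\cap\bar D$, while $\bar G_D\1_B(x)\asymp 1\wedge|x|^{\alpha-d}$ for $|x|>R_0$ by the same computation as in Lemma~\ref{L:C0}. The strong Markov property of $\bar Y$ at $T_B$ then gives $\bar G_D\1_B(x)=\E_x[\bar G_D\1_B(\bar Y_{T_B});T_B<\infty]\asymp\P_x(T_B<\infty)$, so $\P_x(T_B<\infty)\asymp 1\wedge|x|^{\alpha-d}$ for $\bar Y$. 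Because $Y$ is the subprocess of $\bar Y$ killed on exiting $D$, the event $\{T_B<\infty\}$ for $Y$ is contained in the corresponding event for $\bar Y$, giving the upper bound $\P_x(T_B<\infty)\le c\,(1\wedge|x|^{\alpha-d})$ for $Y$.

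For the lower bound I would use $G^Y_D$ itself. First, restricting the mass to $A$ and invoking (b)--(c),
\[
G^Y_D\1_B(x)\ \ge\ \int_A G^Y_D(x,y)\,dy\ \gtrsim\ \int_A|x-y|^{\alpha-d}\,dy\ \gtrsim\ 1\wedge|x|^{\alpha-d}\qquad(|x|>R_0),
\]
the last comparison being elementary ($|x-y|\asymp|x|$ for $y\in A$ when $|x|$ is large, and $x\mapsto\int_A|x-y|^{\alpha-d}\,dy$ is continuous and strictly positive for bounded $|x|$). Second, $\sup_{z\in B\cap D}G^Y_D\1_B(z)\le\sup_z\int_{B\cap D}\bar G_D(z,y)\,dy\le c\sup_z\int_{B\cap D}|z-y|^{\alpha-d}\,dy<\infty$. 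Applying the strong Markov property of $Y$ at $T_B$ (note $B\cap D$ is closed in $D$), $G^Y_D\1_B(x)=\E_x[G^Y_D\1_B(Y_{T_B});T_B<\infty]\le\big(\sup_{B\cap D}G^Y_D\1_B\big)\P_x(T_B<\infty)$, so $\P_x(T_B<\infty)\gtrsim G^Y_D\1_B(x)\gtrsim 1\wedge|x|^{\alpha-d}$.

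The step I expect to be the main obstacle is the upper bound: for $Y$ the exit point $Y_{T_B}$ may lie arbitrarily close to $\partial D$, where $z\mapsto G^Y_D\1_B(z)$ vanishes, so the symmetric two-sided strong-Markov argument of Lemma~\ref{L:C0} is unavailable for $Y$ directly and must be transferred from the reflected process, whose Green function is comparable to the free one. The rest is a routine transcription of the proof of Lemma~\ref{L:C0}.
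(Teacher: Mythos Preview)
Your argument is correct, but it takes a longer route than the paper's. The paper observes that $\bar G_D(x,y)\asymp |x-y|^{\alpha-d}$ (from integrating \eqref{E:reflected-bound}) and then simply remarks that, since $Y$ is the part process of $\bar Y$ killed upon hitting $\partial D$, the proof of Lemma~\ref{L:C0} carries over verbatim. The point you are missing is that here $\partial D\subset D^c\subset B$, so starting from $|x|>R_0$ the reflected process $\bar Y$ cannot reach $\partial D$ without first entering $B$ (and $\bar Y$ never jumps onto $\partial D$, since $\partial D$ is Lebesgue-null). Consequently $T_B$ is literally the same random variable for $Y$ and for $\bar Y$, and the \emph{two-sided} estimate obtained for $\bar Y$ via the strong Markov argument with $\bar G_D$ is already the two-sided estimate for $Y$. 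No separate lower-bound argument is needed.

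Your split into upper and lower bounds is therefore correct but unnecessary: the ``main obstacle'' you anticipate --- that $G^Y_D\mathbf 1_B$ may vanish at the entrance point $Y_{T_B}$ --- is dissolved by working with $\bar Y$ throughout, whose Green function has no boundary degeneracy. Your lower-bound route via the annulus $A$, the sandwich $G^X_D\le G^Y_D\le \bar G_D$, and Corollary~\ref{C:exterior} works, but it imports the Green function estimate for $X^D$ from Section~\ref{sec:exterior}, which the paper's argument avoids. What your approach buys is robustness: it would still give the lower bound in situations where $T_B$ for $Y$ and $\bar Y$ do not coincide; here, however, the geometry $\partial D\subset B$ makes the shortcut available.
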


\begin{proof}
    Let $\bar G_D(x, y)$ be the Green function of the reflected $\alpha$-stable process $\bar Y$ on $D$; that is,
     $\bar G_D(x, y)=\int_0^\infty \bar q_D(t, x, y) dt$.
    It follows from \eqref{E:reflected-bound}  that on $\overline D \times
    \overline D$,
    \begin{equation}
    \bar G_D(x, y) \asymp t^{-d/\alpha} \wedge \frac{t}{|x-y|^{d-\alpha}}.
    \end{equation}
     As the censored $\alpha$-stable process $Y$ is the part process
     of $\bar Y$ killed upon hitting $\partial D$,
     Lemma \ref{L:c-C0} follows from the proof of Lemma \ref{L:C0}.
\end{proof}

With Lemma \ref{L:c-C0}, the proof of
Lemma \ref{L:exterior-lb-large-distance} carries over for $Y$.
  Next, we observe that Lemma \ref{L:phi} applies to
$q_D(t,x,y)$ because the proof relies  only on the parabolic Harnack
inequality, which is proved for $Y$ in \cite[Proposition 4.3]{CK}.
Lemmas \ref{L:epsilon}  and \ref{L:exterior-lb-large-time} holds for
the censored stable process $Y$ because \eqref{e:4.4}.

\medskip

Lastly, the proof of Corollary \ref{C:exterior} can be easily adapted to
prove Corollary \ref{C:c-exterior}

\vskip 0.3truein

Department of Mathematics, University of Washington,
 Seattle, WA 98195, USA

Email: \texttt{zchen@math.washington.edu}  \ and \   \texttt{jtokle@math.washington.edu}

\end{document}